\documentclass{gOPT2e}

\usepackage{amssymb,amsmath,amsthm, amsfonts}

\newcommand{\at}[1]{}
\newcommand{\email}[1]{{\tt #1}}

\newcommand{\Gr}{{\rm gph\,}}

\newcommand{\co}{{\rm conv\,}}
\newcommand{\cl}{{\rm cl\,}}

\newcommand{\yb}{\bar y}

\newcommand{\wb}{\bar w}
\newcommand{\yba}{\yb^\ast{}}
\newcommand{\lb}{\bar\lambda}

\newcommand{\I}{{\cal I}}
\newcommand{\J}{{\cal J}}
\newcommand{\Null}{{\cal N}}

\newcommand{\Kb}{{\bar K}}
\newcommand{\Tlin}{T_\Gamma^{\rm lin}}
\newcommand{\Ib}{\bar\I}
\newcommand{\Jb}{\bar\J}
\newcommand{\Lb}{\bar\Lambda}
\newcommand{\Mb}{\bar{\cal M}}
\newcommand{\Zb}{\bar Z}
\newcommand{\PI}{P_{I^+,\I}}
\newcommand{\PIt}{P_{\tilde I^+,\tilde \I}}
\newcommand{\KbI}{\bar K_{I^+,\I}}
\newcommand{\KbIt}{\bar K_{\tilde I^+,\tilde \I}}

\newcommand{\R}{\mathbb{R}}
\newcommand{\norm}[1]{\|#1\|}
\newcommand{\Norm}[1]{\left\|#1\right\|}
\newcommand{\dist}[1]{{\rm d}(#1)}
\newcommand{\B}{{\cal B}}
\newcommand{\E}{{\cal E}}

\newcommand{\mv}{\,\vert\, }

\newcommand{\oo}{o}
\newcommand{\AT}[2]{{\textstyle{#1\atop#2}}}
\newcommand{\skalp}[1]{\langle #1\rangle}

\newcommand{\range}{{\rm Range\,}}

\newcommand{\Limsup}{\mathop{{\rm Lim}\,{\rm sup}}}

\newtheorem{definition}{Definition}
\newtheorem{theorem}{Theorem}
\newtheorem{lemma}{Lemma}
\newtheorem{example}{Example}

\newtheorem{proposition}{Proposition}
\newtheorem{remark}{Remark}

\begin{document}

\title{ On computation of limiting coderivatives of the normal-cone mapping to inequality systems and their applications\thanks{This is an Accepted Manuscript of an article published by Taylor \& Francis in Optimization on 20 July 2015, available online: http://www.tandfonline.com/10.1080/02331934.2015.1066372}}
\author{Helmut Gfrerer$^{\rm a}$$^\ast$\thanks{$^\ast$ Corresponding author. Email \email{helmut.gfrerer@jku.at}} and
         Ji\v{r}\'{i} V. Outrata$^{\rm b}$\\
         $^{\rm a}$Institute of Computational Mathematics, Johannes Kepler University Linz,
          Linz, Austria; $^{\rm b}$Institute of Information Theory and Automation, Academy of Sciences of the Czech Republic, Prague, Czech Republic, and Centre for Informatics and Applied Optimization, School of Science, Information Technology and Engineering, Federation University of Australia, Ballarat, Australia.}


\maketitle
\begin{abstract}The paper concerns the computation of the limiting coderivative  of the normal-cone mapping related to $C^{2}$ inequality constraints under  weak qualification conditions. The obtained results are applied to verify the Aubin property of solution maps to a class of parameterized generalized equations.
\end{abstract}
\begin{keywords} Limiting normal cone, metric regularity and subregularity, 2-regularity, parameterized generalized equations.
\end{keywords}
\begin{classcode} 49J53, 90C31, 90C46.
\end{classcode}

\section{Introduction}

In sensitivity and stability analysis of parameterized optimization and equilibrium problems via the tools of modern variation analysis one often needs to compute the limiting (Mordukhovich) normal cone to the graph of the mapping $ \hat{N}_{\Gamma} (\cdot)$, where $ \hat{N}_{\Gamma} $ stands for the regular (Fr\'{e}chet) normal cone to a closed (not necessarily convex) constraint set $ \Gamma $. This research started in the nineties with the paper \cite{DR96}, where the authors obtained an exact formula for the above mentioned limiting normal cone in the case when $ \Gamma $ is a convex polyhedron. The special case of $ \Gamma $ being the nonnegative orthant paved then the way to efficient $M$-stationarity conditions for the so-called mathematical programs with complementarity constraints (MPCCs), cf. \cite{Out99}. Later, this formula has been adapted to the frequently arising case when the polyhedron $\Gamma$ is given by affine inequalities \cite{HenRoem07}. Meanwhile the researchers started to attack a more difficult case, when $\Gamma$ is the pre-image of a closed set $\Theta$ in a $C^{2}$-mapping $ q $, arising typically in nonlinear or conic programming. It turned ont that one can again obtain an exact formula provided $ \Theta $ is a $C^{2}$-reducible set
(\cite[Definition 3.135]{BonSh00}) and the reference point is nondegenerate with respect to $ q $ and $ \Theta $ (\cite[Definition 4.70]{BonSh00}). In the case of nonlinear programming (NLP) constraints this amounts to the standard Linear independence constraint qualification (LICQ). These results can be found in \cite{MoOut01} and \cite{OutRam11}. The situation, unfortunately, becomes substantially more difficult, provided the nondegeneracy (or LICQ) condition is relaxed. Such a situation has been investigated in the case of strongly amenable $ \Gamma $ in \cite{LeMo04} and \cite{MoOut07} and in the case of NLP constraints under Mangasarian-Fromovitz constraint qualification (MFCQ) in \cite{HenOutSur09}. In both cases one needs to impose still another so-called 2nd-order  qualification condition (SOCQ) to obtain at least an upper estimate of the desired limiting normal cone which is quite often not very tight. By combining results from \cite{HenOutSur09} and \cite{MinSta11} one can further show that in the NLP case the validity of SOCQ is implied by the Constant rank constraint qualification (CRCQ) so that one needs in fact both MFCQ (or its suitable relaxation) and CRCQ \cite{HenOutSur09}. The result of \cite{MoOut07} has been further developed in \cite{MoRo12}, where under a strengthened SOCQ exact formula has been obtained provided the indicatory function of $ \Theta $ is (convex) piecewise linear.

In all above mentioned works the authors employ essentially the generalized differential calculus of B. Mordukhovich as it is presented in \cite{Mo06a} and \cite{RoWe98}. In recent years, however, this calculus has been enriched by H. Gfrerer, who introduced, among other things, a directional variant of the limiting normal cone. This notion has turned out to be very useful in fine analysis of constraint and variational systems, cf. \cite{Gfr13a,Gfr13b,Gfr14a,Gfr14b}.

The aim of the present paper is to compute the limiting normal cone to the graph of $ \hat{N}_{\Gamma} (\cdot)$ with $ \Gamma $ given by NLP constraints under a  {\em different set of assumptions} compared with the above quoted literature. In particular, as in \cite{GfrOut14}, MFCQ is replaced by the metric subregularity of the perturbation mapping at the reference point combined with a uniform metric regularity of this multifunction on a neighborhood, with the reference point excluded. This condition is clearly weaker (less restrictive) than MFCQ. Furthermore, as another ingredient we employ the notion of 2-regularity, introduced in a slightly different context by Avakov \cite{Av85}. This notion enables us to introduce a new CQ called 2-LICQ which ensures an amenable directional behavior of active constraints. On the basis of these two conditions we then compute the directional limiting normal cones (or their upper estimates) to the graph of $ \hat{N}_{\Gamma} $, which eventually  leads to the desired exact formula for the limiting normal cone to the graph of
$ \hat{N}_{\Gamma} $ at the given reference pair.

The plan of the paper is as follows. In Section 2 we collect the needed notions from variational analysis and some essential statements from the literature which are extensively used throughout the whole paper. Furthermore, this section contains a motivating example showing that under mere MFCQ the desired object cannot be generally computed via first and second derivatives of the problem functions. Section 3 is devoted to 2-LICQ. Apart from the definitions one finds there several auxiliary statements needed in the further development. The main results are then collected in Section 4, whereas Section 5 deals with an application of these results to testing of the Aubin property of solution maps to parameterized equilibrium problems, when $ \Gamma $ arises as a constraint set.

Our notation is basically standard. For a cone $K$ with vertex at $0$, $K^\circ$ denotes its negative polar cone, $\Gr F$ stands for the graph of a mapping $F$ and $\B$ signifies the closed unit ball. Finally, $\dist{x,\Omega}$ denotes the distance of the point $x$ to the set $\Omega$.

\section{Background from variational analysis and preliminaries}

 Given a closed set
$\Omega\subset\mathbb R^d$ and a point $\bar z\in\Omega$, define
the (Bouligand-Severi) {\em tangent/contingent cone} to $\Omega$
at $\bar z$  by
\begin{equation}\label{tan}
T_\Omega(\bar z):=\Limsup_{t\downarrow 0}\frac{\Omega-\bar
z}{t}=\Big\{u\in\mathbb R^d\Big|\;\exists\,t_k\downarrow
0,\;u_k\to u\;\mbox{ with }\;\bar z+t_k u_k\in\Omega ~\forall ~ k\}.
\end{equation}
The (Fr\'{e}chet) {\em regular normal cone} to $\Omega$ at $\bar
z\in\Omega$ can be defined by
\begin{equation}\label{fn}
\widehat N_\Omega(\bar
z):=\Big\{v^\ast\in\R^d\Big|\;\limsup_{z\stackrel{\Omega}{\to}\bar
z}\frac{\skalp{ v^\ast,z-\bar z}}{\|z-\bar z\|}\le
0\Big\}
\end{equation}
or equivalently by
\[\widehat N_\Omega(\bar
z):=(T_\Omega(\bar z))^\circ.\]
The {\em limiting} (Mordukhovich) {\em  normal cone} to $\Omega$ at $\bar{z}\in \Omega$, denoted by $N_{\Omega}(\bar{z})$, is defined by
\begin{equation}\label{eq-203}
N_{\Omega}(\bar{z}):= \Limsup\limits_{z \stackrel{\Omega } {\rightarrow}\bar{z}} \widehat{N}_{\Omega}(z).
\end{equation}
The above notation "$\Limsup$"
stands for the outer  set limit in the sense of
Painlev\'{e}--Kuratowski, see e.g. \cite[Chapter 4]{RoWe98}. Note
that  the regular normal cone and the limiting normal cone  reduce to
the classical  normal cone of convex analysis,
respectively, when the set $\Omega$ is convex. An interested reader can find enough material about the properties of the above notions e.g. in the monographs \cite{RoWe98}, \cite{Mo06a}.

The following directional version of (\ref{eq-203}) has been introduced in \cite{Gfr14b}. Given a direction $u \in \mathbb{R}^{d}$, the {\em limiting} (Mordukhovich) {\em normal cone} to $\Omega$ in the direction $u$ at $\bar{z}\in \Omega$ is defined by
\[
N_{\Omega}(\bar{z}; u):=\{z^{*} | \exists t_{k}\downarrow 0, u_{k}\rightarrow u, z^{*}_{k}\rightarrow z^{*}: z^{*}_{k}\in \widehat{N}_{\Omega}(\bar{z}+ t_{k}u_{k}) \forall k\}.
\]
A closely related notion to $N_{\Omega}(\bar{z}; u)$ has been defined in \cite{GiMo11}.

Considering next a closed-graph set-valued (in particular, single-valued) mapping $\Psi : \mathbb{R}^{d}\rightrightarrows \mathbb{R}^{s}$, we will describe its local behavior around a point from its graph by means of the following notion.

Given $(\bar{z},\bar{w})\in {\rm gph} \Psi$, the {\em limiting coderivative} of $\Psi$ at $(\bar{z}, \bar{w})$ is the multifunction $D^{*}\Psi(\bar{z}, \bar{w}) : \mathbb{R}^{s}\rightrightarrows\mathbb{R}^{d}$ defined by
\[
D^{*}\Psi (\bar{z}, \bar{w})(w^{*}):= \{z^{*}| (z^{*},-w^{*})\in N_{{\rm gph}\Psi} (\bar{z}, \bar{w})\}, ~w^{*}\in \mathbb{R}^{s}.
\]
In connection with multifunctions arising in the sequel we will extensively employ  the stability properties defined next.

\begin{definition}Let $\Psi:\R^d\rightrightarrows\R^s$ be a multifunction,  $(\bar u,\bar v)\in\Gr \Psi$ and
 $\kappa>0$. Then
\begin{enumerate}
\item $\Psi$ is called {\em metrically regular with modulus
$\kappa$} near $(\bar u,\bar v)$ if there are neighborhoods $U$ of
$\bar u$ and $V$ of $\bar v$ such that

\begin{equation}
\label{EqMetrReg} \dist{u,\Psi^{-1}(v)}\leq
\kappa\dist{v,\Psi(u)}\ \forall (u,v)\in U\times V.
\end{equation}
\item $\Psi$ is called {\em metrically subregular with modulus
$\kappa$} at $(\bar u,\bar v)$ if there is a neighborhood $U$ of
$\bar u$ such that
\begin{equation}
\label{EqMetrSubReg} \dist{u,\Psi^{-1}(\bar v)}\leq
\kappa\dist{\bar v,\Psi(u)}\ \forall u\in U.
\end{equation}
\end{enumerate}
\end{definition}

Consider now the set $\Gamma \subset \mathbb{R}^{m}$ defined by
\begin{equation}\label{eq-207}
\Gamma = \{ y | q_{i}(y)\leq 0, ~i = 1,2,\ldots, l\},
\end{equation}
where the functions $q_{i}$ are twice continuously differentiable. We could conduct our analysis without much additional effort also for $\Gamma$ given by inequalities and {\em equalities}, but for the sake of brevity we prefer to stick only to (\ref{eq-207}). Note that we do not impose any kind of convexity assumptions. A central object in this paper is the regular normal-cone mapping $\widehat{N}_{\Gamma}(\cdot)$  with $\Gamma$ from (\ref{eq-207}). If the {\em perturbation mapping}
\begin{equation}\label{eq-210}
M_{q}(y):= q(y)-\mathbb{R}^{l}_{-}
\end{equation}
is metrically subregular at $(y, 0)$, then the regular normal cone $\widehat{N}_{\Gamma}(y)$ can be represented as
\[\widehat{N}_{\Gamma}(y)=\nabla q(y)^T N_{\R_-^l}(q(y))=\{\nabla q(y)^T\lambda\mv \lambda\in\R^l_+,\ q(y)^T\lambda=0\}.\]
Given  elements $y\in \Gamma$ and $y^\ast\in \widehat N_\Gamma(y)$
we define by
\[\Lambda(y,y^\ast):=\{\lambda\in N_{\R_-^l}(q(y))\mv \nabla
q(y)^T\lambda=y^\ast\},\] the set of {\em Lagrange multipliers}
associated with $(y,y^\ast)$. Moreover, with $\I(y):=\{i\in\{1,\ldots,l\}\mv q_i(y)=0\}$ being the index set of active constraints, \[\Tlin(y):=\{v\mv \nabla q_i(y)v\leq 0,\ i\in \I(y)\}\]
and
\[K(y,y^\ast):=T_\Gamma(y)\cap (y^\ast)^\perp\]
stand for the {\em linearized cone} to $\Gamma$ at $y$ and {\em critical cone} to $\Gamma$ at $y$ with respect to
$y^\ast$, respectively. Under metric subregularity of $M_q$ at $(y,0)$ the cones $\Tlin(y)$ and $T_\Gamma(y)$ coincide.

Given index sets $I^+\subset\I\subset \{1,\ldots,l\}$ we write
\[\PI:=\{\mu\in \R^l\mv \mu_i=0, i\not\in \I, \mu_i\geq 0, i\in \I\setminus I^+\}\] and
\[K_{I^+,\I}(y):=\{w\in\R^m\mv \nabla q_i(y)w=0,\ i\in I^+,\ \nabla q_i(y)w\leq0,\ i\in \I\setminus I^+\}.\]
Note that $K_{I^+,\I}(y)^\circ=\nabla q(y)^T\PI$. Finally, for $\lambda\in\R^l_+$ we denote by $I^+(\lambda):=\{i\mv\lambda_i>0\}$ the index set of positive components of $\lambda$.

To simplify the notation, for a given reference pair $(\yb,\yba)$, $\yb\in\Gamma$,
$\yba\in\widehat N_\Gamma(\yb)$, fixed throughout this paper, we will
shortly set $\Ib:=\I(\yb)$, $\Lb:=\Lambda(\yb,\yba)$,
$\Kb:=K(\yb,\yba)$ and $\KbI:=K_{I^+,\I}(\yb)$.

The formulas collected in the next statement have been proved in \cite{MoOut01} and \cite[Chapter 13]{RoWe98}.

\begin{theorem}\label{ThNormalConeLICQ}
Assume that LICQ is fulfilled at $\yb$ and let $\Lb=\{\lb\}$ denote the unique multiplier associated with $(\yb,\yba)$. Then
\[T_{\Gr \widehat N_\Gamma}(\yb,\yba)=\{(v,v^\ast)\mv
v^\ast\in \nabla^2(\lb^Tq)(\yb)v+N_{\Kb}(v)\},\]
\[\widehat N_{\Gr \widehat N_\Gamma}(\yb,\yba)=\{(w^\ast,w)\mv w\in\Kb, w^\ast+\nabla^2(\lb^Tq)(\yb)w\in\Kb^\circ\}\]
and
\begin{equation}\label{eq-208}
N_{{\rm gph}\hat{N_{\Gamma}}}(\bar{y}, \bar{y}^{*})=\{(w^{*}, w)| w^{*}+\nabla^{2}(\bar{\lambda}^{T}q)(\bar{y})w \in \nabla q(\bar{y})^{T}D^{*}N_{\mathbb{R}^{l}_{-}}(q(\bar{y}),\bar{\lambda})(-\nabla q(\bar{y})w)\}.
\end{equation}
\end{theorem}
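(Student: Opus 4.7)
The plan is to exploit that LICQ gives uniqueness of the multiplier in a full neighborhood: for every $(y, y^\ast) \in \Gr \hat N_\Gamma$ close to $(\yb, \yba)$, an implicit-function argument applied to $y^\ast = \nabla q(y)^T \lambda$ restricted to active indices together with $q_i(y) = 0$ on $I^+(\lb)$ produces a unique continuous multiplier $\lambda(y, y^\ast) \to \lb$. Locally $\Gr \hat N_\Gamma$ is therefore the smooth image of a neighborhood of $(\yb, \lb)$ in $\{(y,\lambda)\mv(q(y),\lambda)\in\Gr N_{\R_-^l}\}$ under $(y,\lambda) \mapsto (y, \nabla q(y)^T\lambda)$, which allows explicit Taylor-based computation of the tangent and normal objects.

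For the first formula, take $t_k \downarrow 0$, $(v_k, v_k^\ast) \to (v, v^\ast)$ with $(\yb + t_k v_k, \yba + t_k v_k^\ast) \in \Gr \hat N_\Gamma$ and associated multipliers $\lambda_k \to \lb$. LICQ forces $\mu_k := (\lambda_k - \lb)/t_k$ to be bounded; extracting $\mu_k \to \mu$ and inserting into the Taylor expansion of $\yba + t_k v_k^\ast = \nabla q(\yb + t_k v_k)^T(\lb + t_k \mu_k)$, divided by $t_k$, yields $v^\ast = \nabla^2(\lb^T q)(\yb) v + \nabla q(\yb)^T\mu$. Passing the complementarity conditions on $\Gr N_{\R_-^l}$ to the limit forces $v \in \Kb$, $\mu_i = 0$ for $i \notin \Ib$, and $\mu_i \geq 0$ with $\mu_i \nabla q_i(\yb) v = 0$ for $i \in \Ib \setminus I^+(\lb)$, so $\nabla q(\yb)^T \mu \in \Kb^\circ \cap v^\perp = N_{\Kb}(v)$. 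The reverse inclusion is obtained by inverting this construction: given $v \in \Kb$ and $\eta \in N_{\Kb}(v)$, the identity $\Kb^\circ = \nabla q(\yb)^T P_{I^+(\lb),\Ib}$ noted in the text together with LICQ furnish a unique admissible $\mu$ with $\eta = \nabla q(\yb)^T \mu$, and a suitable $O(t_k^2)$ correction of $\yb + t_k v$ keeping the required active constraints satisfied (available via LICQ and the implicit function theorem) produces the approximating sequence.

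The second formula is obtained from the first by polarity. Setting $H := \nabla^2(\lb^T q)(\yb)$ and substituting $v^\ast = Hv + \eta$, the defining inequality for $(w^\ast, w) \in T_{\Gr \hat N_\Gamma}(\yb, \yba)^\circ$ reads $\skalp{w^\ast + Hw, v} + \skalp{w, \eta} \leq 0$ for all $v \in \Kb$ and $\eta \in N_{\Kb}(v)$. Taking $\eta = 0$ forces $w^\ast + Hw \in \Kb^\circ$; taking $v = 0$ with $\eta$ ranging over $N_{\Kb}(0) = \Kb^\circ$ forces $w \in \Kb^{\circ\circ} = \Kb$. Conversely, under these two conditions, for arbitrary $v \in \Kb$ and $\eta \in N_{\Kb}(v) \subset \Kb^\circ$ one has both $\skalp{w^\ast + Hw, v} \leq 0$ and $\skalp{w, \eta} \leq 0$, so the full inequality holds.

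The limiting normal cone formula (\ref{eq-208}) follows by applying $N_{\Gr \hat N_\Gamma}(\yb, \yba) = \Limsup \hat N_{\Gr \hat N_\Gamma}(y, y^\ast)$ along sequences $(y_k, y_k^\ast) \to (\yb, \yba)$ in $\Gr \hat N_\Gamma$. LICQ persists at each nearby point, so the second formula supplies regular normals $(w_k^\ast, w_k)$ with $w_k \in K(y_k, y_k^\ast)$ and $w_k^\ast + \nabla^2(\lambda_k^T q)(y_k) w_k \in K(y_k, y_k^\ast)^\circ$. Passage to the limit, combined with continuity of the Hessian term and of $\lambda_k \to \lb$, shows that $w^\ast + Hw$ lies in the image under $\nabla q(\yb)^T$ of $D^\ast N_{\R_-^l}(q(\yb), \lb)(-\nabla q(\yb) w)$ via the explicit Dontchev--Rockafellar description of $N_{\Gr N_{\R_-^l}}$; LICQ makes the underlying coderivative chain rule hold as equality by ensuring injectivity of $\nabla q(\yb)^T$ on the active span. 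The main obstacle is the reverse inclusion: one must realize, for every admissible combinatorial index pattern appearing in the Dontchev--Rockafellar description, a sequence of nearby points of $\Gr \hat N_\Gamma$ producing that pattern. LICQ is the essential ingredient here, as it allows independent perturbation of the active multiplier components and of the active constraint values.
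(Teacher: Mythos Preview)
The paper does not actually contain a proof of this theorem: the sentence immediately preceding the statement says that the formulas ``have been proved in \cite{MoOut01} and \cite[Chapter 13]{RoWe98}'', and no proof environment follows. So there is no in-paper argument to compare against; your task was only to supply what the paper imports from the literature.

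Your sketch is essentially the standard route taken in those references and is sound. A couple of small remarks. First, when you assert that $\mu_k=(\lambda_k-\lb)/t_k$ is bounded ``by LICQ'', it is worth making explicit why: LICQ persists in a neighborhood, so $\I(y_k)\subset\Ib$, the non-active components of $\lambda_k$ vanish, and injectivity of $\nabla q(\yb)^T$ on $\Span\{e_i\mv i\in\Ib\}$ (stable under small perturbations) turns the $O(t_k)$ bound on $\nabla q(y_k)^T(\lambda_k-\lb)$ into an $O(t_k)$ bound on $\lambda_k-\lb$. Second, in deriving the regular normal cone by polarity you tacitly use symmetry of $H=\nabla^2(\lb^Tq)(\yb)$ when passing from $\skalp{w,Hv}$ to $\skalp{Hw,v}$; this is of course true for a Hessian, but should be said. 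Third, for the limiting normal cone you invoke the explicit Dontchev--Rockafellar description of $N_{\Gr N_{\R_-^l}}$ and a coderivative chain rule; this is exactly how the cited references proceed, and LICQ is indeed what makes the change of variables $(y,\lambda)\mapsto(y,\nabla q(y)^T\lambda)$ a local diffeomorphism between $\{(y,\lambda)\mv(q(y),\lambda)\in\Gr N_{\R_-^l}\}$ and $\Gr\widehat N_\Gamma$, so that limiting normals transfer without loss. Your closing remark about realizing every combinatorial pattern via independent perturbation of active constraint values and multipliers is the correct mechanism for the reverse inclusion.
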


Since the last term on the right-hand side of (\ref{eq-208}) can be expressed in terms of problem data, one also has
\begin{equation}\label{eq-209}
N_{{\rm gph}\hat{N_{\Gamma}}}(
\yb,\yba)= \bigcup_{I^+(\lb)\subset I^+\subset\I\subset\Ib}\{(w^\ast,w)\mv w\in\KbI, w^\ast+\nabla^2(\lb^Tq)(\yb)w\in\KbI^\circ\}.
\end{equation}

If we drop LICQ, a natural option would be to require MFCQ at $\bar{y}$, i.e., the metric regularity of the  perturbation mapping $M_{q}$ given by \eqref{eq-210}
near $(\bar{y}, 0)$. As in \cite{GfrOut14}, however, our work will be based on a  weaker notion.

\begin{definition}\label{DefMetrRegExc}Let $\yb\in\Gamma$. We say that $M_{q}$
is {\em  metrically regular in the vicinity of} $\bar{y}$, if there is some neighborhood $V$ of $\yb$
and some constant $\kappa>0$ such that for every $y\in
M^{-1}(0)\cap V$, $y\not=\yb$, the multifunction $M_{q}$ is metrically
regular near $(y,0)$ with modulus $\kappa$.
\end{definition}
This property is, in particular, implied in the following way:

\begin{definition}\label{DefSOSCMS}
We say that the {\em second order sufficient condition for metric
subregularity} (SOSCMS) holds at $\yb\in\Gamma$, if
for every
$0\not=u\in\Tlin(\yb)$ one has
\[
\lambda \in \ker (\nabla q(\yb)^T)\cap\widehat N_{\R^l_-}(q(\yb)),
\ u^T\nabla^2 (\lambda^Tq)(\yb)u\geq0\ \Longrightarrow
\lambda=0.\]
\end{definition}
\begin{proposition}[{\cite[Theorem 6.1]{Gfr11}, \cite[Proposition 3]{GfrOut14}}]
Let $\yb\in\Gamma$. Under SOSCMS the mapping $M_q$ is metrically subregular at $(\yb,0)$ and metrically regular in the vicinity of $\bar{y}$.
\end{proposition}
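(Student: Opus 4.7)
The plan is to prove both conclusions by contradiction via a scaling argument combined with a second-order Taylor expansion, in the spirit of the standard proofs of second-order sufficient optimality conditions.

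For metric subregularity at $(\yb,0)$, suppose it fails: there exists $y_n\to\yb$ with $\kappa_n:=\dist{y_n,M_q^{-1}(0)}/\dist{0,M_q(y_n)}\to\infty$. Set $t_n:=\|y_n-\yb\|$ and $u_n:=(y_n-\yb)/t_n$; after extracting a subsequence one may assume $u_n\to u$ with $\|u\|=1$. Since $\dist{0,M_q(y_n)}\le t_n/\kappa_n$, a first-order Taylor expansion of each active constraint yields $\nabla q_i(\yb)u\le 0$ for $i\in\Ib$, so $u\in\Tlin(\yb)$. To produce a multiplier I would project $y_n$ onto $M_q^{-1}(0)$ to obtain some $\tilde y_n$, invoke a Fritz-John type optimality condition at $\tilde y_n$, normalize, and pass to the limit in order to get a unit $\lambda\in\widehat N_{\R^l_-}(q(\yb))$ with $\nabla q(\yb)^T\lambda=0$. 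Expanding $\sum_i\lambda_{n,i}q_i(\tilde y_n)=0$ to second order around $\yb$ and exploiting the blow-up of $\kappa_n$, the first-order residual $\skalp{\nabla q(\yb)^T\lambda_n,u_n}$ becomes negligible relative to $t_n^2$, yielding in the limit $u^T\nabla^2(\lambda^Tq)(\yb)u\ge 0$. SOSCMS then forces $\lambda=0$, contradicting $\|\lambda\|=1$.

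For metric regularity in the vicinity of $\yb$, the argument is analogous. If no single modulus $\kappa$ works on any punctured neighborhood of $\yb$, then by the Mordukhovich criterion there exist $y_n'\to\yb$ and multipliers $\lambda_n\in N_{\R^l_-}(q(y_n'))$ with $\|\lambda_n\|=1$ and $\nabla q(y_n')^T\lambda_n\to 0$. Setting $u_n:=(y_n'-\yb)/\|y_n'-\yb\|$ and passing to limits $u_n\to u\ne 0$, $\lambda_n\to\lambda$ with $\|\lambda\|=1$, one checks $u\in\Tlin(\yb)$ and $\lambda\in\widehat N_{\R^l_-}(q(\yb))\cap\ker(\nabla q(\yb)^T)$. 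Using complementarity $\lambda_{n,i}q_i(y_n')=0$, a second-order expansion of $0=\sum_i\lambda_{n,i}q_i(y_n')$ around $\yb$, divided by $\|y_n'-\yb\|^2$, gives $u^T\nabla^2(\lambda^Tq)(\yb)u\ge 0$ in the limit, since $\nabla q(\yb)^T\lambda_n\to 0$ makes the first-order term vanish. Again SOSCMS is violated.

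The main obstacle in both parts is the careful bookkeeping of rates in the second-order expansion, so that after scaling by $t_n^2$ the first-order term indeed disappears and the resulting quadratic form has the correct sign to invoke SOSCMS. A secondary delicate point is ensuring that the limiting direction $u$ is nonzero: in the subregularity argument this is automatic from $\|u_n\|=1$, while in the metric-regularity-in-the-vicinity case it follows from the choice $y_n'\ne\yb$ combined with a suitable selection of the probe point.
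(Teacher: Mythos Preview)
The paper itself does not prove this proposition; it merely cites \cite[Theorem~6.1]{Gfr11} and \cite[Proposition~3]{GfrOut14}. Your contradiction-and-scaling strategy is the natural one and is in the spirit of those references, but the sketch leaves a genuine gap exactly at the point you single out as ``the main obstacle.''

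In the metric-regularity-in-the-vicinity part, after expanding $0=\sum_i\lambda_{n,i}q_i(y_n')$ to second order at $\yb$ and dividing by $\|y_n'-\yb\|^2$, the first-order contribution is
\[
\frac{\skalp{\nabla q(\yb)^T\lambda_n,\,u_n}}{\|y_n'-\yb\|},
\]
which is of the indeterminate form $0/0$: knowing only that $\nabla q(y_n')^T\lambda_n\to 0$ (and hence $\nabla q(\yb)^T\lambda_n\to 0$) does \emph{not} force this ratio to vanish, so the conclusion $u^T\nabla^2(\lambda^Tq)(\yb)u\ge 0$ does not follow. Your argument goes through when MFCQ actually fails at $y_n'$ (then $\nabla q(y_n')^T\lambda_n=0$ exactly and the quadratic form tends to $0$), but the case where MFCQ holds at every nearby point yet the regularity modulus blows up needs a separate argument linking the rate of $\|\nabla q(y_n')^T\lambda_n\|$ to $\|y_n'-\yb\|$, and you have not supplied one.

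In the subregularity part there is a second issue: projecting $y_n$ onto $\Gamma$ yields $\tilde y_n$, but nothing prevents $\tilde y_n=\yb$ or $\|\tilde y_n-\yb\|/\|y_n-\yb\|\to 0$, so the normalized direction $(\tilde y_n-\yb)/\|\tilde y_n-\yb\|$ need not converge to $u$. Your second-order expansion of $\sum_i\lambda_{n,i}q_i(\tilde y_n)$ at $\yb$ therefore produces a quadratic form in a possibly different limiting direction, and ``exploiting the blow-up of $\kappa_n$'' does not by itself repair this. The standard remedy, used in \cite{Gfr11}, is to replace the exact projection by an Ekeland-type perturbation that simultaneously keeps the approximate minimizer at a controlled distance from $y_n$ and furnishes multipliers with the required asymptotics; this ingredient is missing from your outline.
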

Since MFCQ can be equivalently characterized by the condition
\[\lambda \in \ker (\nabla q(\yb)^T)\cap\widehat N_{\R^l_-}(q(\yb))
 \Longrightarrow
\lambda=0,\]
MFCQ implies SOSCMS.

To present the respective results about $T_{{\rm gph}\hat{N}_{\Gamma}}$ and $\widehat{N}_{{\rm gph}\hat{N}_{\Gamma}}$, we introduce some additional notation.

Given $(y,y^\ast)\in\Gr\widehat N_\Gamma$ we introduce the index set $I^+(y,y^\ast):=\bigcup_{\lambda\in\Lambda(y,y^\ast)}I^+(\lambda)$. With a direction $v\in\Tlin(y)$ let us now associate the index set $\I(y;v):=\{i\in\I(y)\mv \nabla q_i(y)v= 0\}$ and the {\em directional multiplier set} $\Lambda(y,y^\ast;v)$ as the solution set of the linear optimization problem
\begin{equation}\label{EqLPDirMult}
\max_{\lambda\in \Lambda(y,y^\ast)}v^T\nabla^2(\lambda^Tq)(y)v.
\end{equation}
The collection of the extreme points of the polyhedron $\Lambda(y,y^\ast)$ is denoted by $\E(y,y^\ast)$ and we set $\Lambda^\E(y,y^\ast;v):=\Lambda(y,y^\ast;v)\cap\co \E(y,y^\ast)$. Recall that $\lambda\in \Lambda(y,y^\ast)$ is an extreme point of $\Lambda(y,y^\ast)$ if and only if the family $\nabla q_i(y)$, $i\in I^+(\lambda)$, is linearly independent. Since there are only finitely many subsets of $\{1,\ldots,l\}$ it follows that for every $y\in\Gamma$ there is some constant $\kappa$ such that
\begin{equation}
\label{EqBndExtremePoints}\norm{\lambda}\leq\kappa \norm{y^\ast}\ \forall y^\ast\in\R^m\forall\lambda\in\E(y,y^\ast)
\end{equation}
We now define for each $v\in \Null(y):=\{v\in\R^m\mv \nabla q_i(y)v=0,\ i\in\I(y)\}$, i.e. the null space of the gradients of the active inequalities,  the sets
\[{\cal W}(y,y^\ast;v):=\{w\in K(y,y^\ast)\mv w^T\nabla^2((\lambda^1-\lambda^2)^Tq)(y)v=0,\forall
\lambda^1,\lambda^2\in\Lambda(y,y^\ast;v)\},\]
\[\tilde\Lambda^\E(y,y^\ast;v):=\begin{cases}\Lambda^\E(y,y^\ast;v)
&\mbox{if $v\not=0$,}\\
\co(\bigcup\limits_{0\not=u\in K(y,y^\ast)}\Lambda^\E(y,y^\ast; u))&\mbox{if $v=0$,
$K(y,y^\ast)\not=\{0\}$,}\end{cases}\]
 and for each $w\in K(y,y^\ast)$ the set
\begin{eqnarray*}
\lefteqn{L(y,y^\ast;v;w)}\\
&:=&\begin{cases}\{-\nabla^2(\lambda^Tq)(y)w\mv \lambda\in\tilde\Lambda^\E(y,y^\ast;v)\}
+(K(y,y^\ast))^\circ&\mbox{if $K(y,y^\ast)\not=\{0\}$}\\
\R^m&\mbox{if $K(y,y^\ast)=\{0\}$}. \end{cases}
\end{eqnarray*}
Again we will simplify the notation for quantities depending on $\yb$ or $(\yb,\yba)$ by using an overline, i.e., we will write $\Ib(v)$, $\Lb(v)$, $\bar{\cal W}(v)$, etc. instead of  $\I(\yb;v)$, $\Lambda(\yb,\yba;v)$, ${\cal W}(\yb,\yba;v)$ etc.

\begin{theorem}[{\cite[Theorems 1,2]{GfrOut14}}]\label{ThRegNormalCone}Let $(y,y^\ast)\in\Gr\widehat N_\Gamma$ and assume that $M_q$ is metrically subregular at $(y,0)$. Then
\begin{equation}
\label{EqInclTangCone}T_{\Gr \widehat N_\Gamma}(\yb,\yba)\supset\{(v,v^\ast)\mv \exists \lambda\in \Lb(v):
v^\ast\in \nabla^2(\lambda^Tq)(\yb)v+N_{\Kb}(v)\}
\end{equation}
and
\begin{equation}\label{EqInclNormalCone}\widehat N_{\Gr \widehat N_\Gamma}(y,y^\ast)\subset \{(w^\ast,w)\mv w\in
\bigcap_{v\in\Null(y)}{\cal W}(y,y^\ast;v), w^\ast\in
\bigcap_{v\in\Null(y)}L(y,y^\ast;v;w)\}.
\end{equation}
Equality holds in \eqref{EqInclTangCone} if, in addition, $M_q$ is  metrically regular in the vicinity of $y$ and \eqref{EqInclNormalCone} holds with equality if $M_q$ is  metrically regular in the vicinity of $y$ and either  for any
$0\not=v_1,v_2\in K(y,y^\ast)$ it holds $\Lambda^\E(y,y^\ast;v_1)=\Lambda^\E(y,y^\ast;v_2)$ or $I^+(y,y^\ast)=\I(y)$.
\end{theorem}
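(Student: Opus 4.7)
The plan is to prove the four assertions in sequence, using the metric subregularity (resp.\ regularity in the vicinity) of $M_q$ to pass between statements about $\Gr\widehat N_\Gamma$ and statements about Lagrange multipliers.

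For the tangent cone lower bound \eqref{EqInclTangCone}, I would fix $(v,v^\ast)$ in the right-hand side, so that $\lambda\in\Lb(v)$ and $v^\ast-\nabla^2(\lambda^Tq)(\yb)v\in N_{\Kb}(v)$ for some choice of $\lambda$. Since $v\in\Kb\subset\Tlin(\yb)=T_\Gamma(\yb)$ (the last equality holding by metric subregularity), there exist $t_k\downarrow 0$ and $v_k\to v$ with $y_k:=\yb+t_kv_k\in\Gamma$. I would then construct a sequence $\lambda_k\to\lambda$ with $\lambda_k\in N_{\R^l_-}(q(y_k))$, adjusting the components of $\lambda$ according to whether the constraints stay active along $y_k$, and set $y_k^\ast:=\nabla q(y_k)^T\lambda_k\in\widehat N_\Gamma(y_k)$. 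A Taylor expansion then produces
\[
\frac{y_k^\ast-\yba}{t_k}=\nabla^2(\lambda^Tq)(\yb)v+\eta_k+\oo(1),
\]
where $\eta_k$ comes from the multiplier increment and is shown to converge to the residual $v^\ast-\nabla^2(\lambda^Tq)(\yb)v\in N_{\Kb}(v)$ using the variational description of $\Lb(v)$ as the maximizer set of \eqref{EqLPDirMult}.

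For the regular normal cone upper bound \eqref{EqInclNormalCone}, I would use $\widehat N_{\Gr\widehat N_\Gamma}(y,y^\ast)\subset(T_{\Gr\widehat N_\Gamma}(y,y^\ast))^\circ$ together with the tangent cone inclusion just proved. For each $v\in\Null(y)$, each $\lambda\in\Lambda(y,y^\ast;v)$, and each $\eta\in N_{K(y,y^\ast)}(v)$, the pair $(v,\nabla^2(\lambda^Tq)(y)v+\eta)$ is tangent, so
\[
\langle w^\ast,v\rangle+\langle w,\nabla^2(\lambda^Tq)(y)v\rangle+\langle w,\eta\rangle\leq 0.
\]
Positive homogeneity in $\eta$ forces $\langle w,\eta\rangle\leq 0$ for every $\eta\in N_{K(y,y^\ast)}(v)$, so by polarity $w\in K(y,y^\ast)$. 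Taking two different multipliers $\lambda^1,\lambda^2\in\Lambda(y,y^\ast;v)$ and subtracting the corresponding inequalities yields $w^T\nabla^2((\lambda^1-\lambda^2)^Tq)(y)v=0$, i.e.\ $w\in{\cal W}(y,y^\ast;v)$. The residual inequality is precisely the condition $w^\ast\in L(y,y^\ast;v;w)$; the substitution of $\tilde\Lambda^\E$ for $\Lambda^\E$ in the case $v=0$ reflects that one then exploits tangents generated by every nonzero $u\in K(y,y^\ast)$, which forces the convex hull.

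For the equality claims the stronger assumption of metric regularity in the vicinity enters decisively. Given $(v,v^\ast)\in T_{\Gr\widehat N_\Gamma}(\yb,\yba)$ with defining sequences $(y_k,y_k^\ast)$, one selects $\lambda_k\in\Lambda(y_k,y_k^\ast)$ of extreme-point type; metric regularity in the vicinity combined with \eqref{EqBndExtremePoints} yields boundedness, and up to a subsequence $\lambda_k\to\lambda\in\Lb$. A second-order expansion of $y_k^\ast=\nabla q(y_k)^T\lambda_k$ together with the complementarity $q_i(y_k)\lambda_{k,i}=0$ shows that $\lambda$ maximizes \eqref{EqLPDirMult}, i.e.\ $\lambda\in\Lb(v)$, and that $v^\ast-\nabla^2(\lambda^Tq)(\yb)v\in N_{\Kb}(v)$. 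For the regular normal cone equality, the extra hypothesis ($\Lambda^\E(y,y^\ast;v_1)=\Lambda^\E(y,y^\ast;v_2)$ for all nonzero $v_i$, or $I^+(y,y^\ast)=\I(y)$) guarantees that the intersection over $v\in\Null(y)$ collapses to a single set, and one can then reverse the polar argument by producing, for every $(w^\ast,w)$ in that set, an explicit majorant of the Fr\'echet quotient in \eqref{fn}.

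The main obstacle is the regular normal cone equality. Verifying the Fr\'echet limsup inequality for a candidate $(w^\ast,w)$ requires uniform control of the second-order behaviour of $\widehat N_\Gamma$ along arbitrary graph-sequences $(z,z^\ast)\to(y,y^\ast)$; this is precisely where the structural assumption on the extreme-point multipliers (or full positivity $I^+(y,y^\ast)=\I(y)$) is needed, since without it different approach directions $v\in\Null(y)$ can yield genuinely different second-order constraints and the inclusion \eqref{EqInclNormalCone} can be strict.
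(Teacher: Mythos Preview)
The paper does not prove this theorem at all: it is quoted verbatim from \cite[Theorems 1,2]{GfrOut14} and carried without argument. Consequently there is no ``paper's own proof'' to compare your proposal against; the result is used here as an input, not established.

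As to the content of your sketch, the overall architecture (construct explicit curves in $\Gr\widehat N_\Gamma$ for the tangent inclusion, then dualize via $\widehat N=(T)^\circ$ for the regular normal cone inclusion, and upgrade to equalities using bounded multipliers coming from metric regularity in the vicinity) is the natural one and is, as far as one can tell, also what \cite{GfrOut14} does. Two places deserve more care. First, for \eqref{EqInclTangCone} you cannot simply pick \emph{any} sequence $y_k=\yb+t_kv_k\in\Gamma$ witnessing $v\in T_\Gamma(\yb)$ and then ``adjust'' the multipliers: in order to realise a prescribed element of $N_{\Kb}(v)$ as the limit of $(y_k^\ast-\yba)/t_k$ you must choose $y_k$ and $\lambda_k$ jointly, typically via a second-order curve $y_k=\yb+t_kv+\tfrac12 t_k^2 z$ with $z$ feasible for the dual problem \eqref{EqDPDirMult} and $\lambda_k=\lambda+t_k\mu$ with $\mu$ encoding the normal direction. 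The LP duality between \eqref{EqLPDirMult} and \eqref{EqDPDirMult} is what makes this work, and your sketch suppresses it. Second, in the polar argument you conclude $w\in K(y,y^\ast)$ from $\langle w,\eta\rangle\le0$ for all $\eta\in N_{K(y,y^\ast)}(v)$; this is valid precisely because $v\in\Null(y)$ lies in the lineality space of the polyhedral cone $K(y,y^\ast)$, so that $N_{K(y,y^\ast)}(v)=K(y,y^\ast)^\circ$, a point worth making explicit. The passage from $\Lambda(y,y^\ast;v)$ to $\tilde\Lambda^\E(y,y^\ast;v)$ in the description of $L(y,y^\ast;v;w)$ also requires an argument (extreme-point selection and convexification), which you only allude to.
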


Very little is known about the limiting normal cone, if we drop the assumption of LICQ.
The following example demonstrates that in general we cannot describe the limiting normal cone $N_{\Gr \widehat N_\Gamma}(\yb,\yb^\ast)$ by first-order and second-order derivatives of $q$ at $\yb$, if the only constraint qualification we assume is  MFCQ.

\begin{example}Let
\[\Gamma:=\left\{y\in\R^3\mv \begin{array}{l}q_1(y):=y_3-y_1^3\leq 0\\q_2(y):=y_3-a^3y_2^3\leq 0
\end{array}\right\},\]
where $a>0$ denotes a fixed parameter, and $(\yb,\yb^\ast)=(0,0)$. Obviously MFCQ is fulfilled at $\yb$. Straightforward calculations yield
\[\widehat N_\Gamma(y)=\begin{cases}\{(-3y_1^2\lambda_1,0,\lambda_1)\mv \lambda_1\geq 0\}&\mbox{if $y_1<ay_2$, $y_3=y_1^3$,}\\
\{(0,-3a^3y_2^2\lambda_2,\lambda_2)\mv \lambda_2\geq 0\}&\mbox{if $y_1>ay_2$, $y_3=a^3y_2^3$,}\\
\{(-3y_1^2\lambda_1,-3a^3y_2^2\lambda_2,\lambda_1+\lambda_2)\mv \lambda_1,\lambda_2\geq 0\}&\mbox{if $y_1=ay_2$, $y_3=y_1^3$,}\\
\{(0,0,0)\}&\mbox{if $y_3<\min\{y_1^3,a^3y_2^3\}$,}\\
\emptyset&\mbox{if $y_3>\min\{y_1^3,a^3y_2^3\}$.}
\end{cases}\]
By applying Theorems \ref{ThNormalConeLICQ},\ref{ThRegNormalCone} we obtain for an arbitrary pair $(y,y^\ast)\in\Gr\widehat N_\Gamma$ that the set $\widehat N_{\Gr\widehat N_\Gamma}(y,y^\ast)$ consists of the collection of all $(w^\ast,w)\in\R^3\times\R^3$ satisfying
\begin{enumerate}
\item $w_3=3y_1^2w_1, w_1^\ast=6\lambda_1y_1w_1-3w_3^\ast y_1^2,w_2^\ast=0$, if $y_1<ay_2$, $y_3=y_1^3$, $y^\ast=(-3y_1^2\lambda_1,0,\lambda_1)$, $\lambda_1>0$,
\item $w_3\leq 3y_1^2w_1,  w_1^\ast=-3w_3^\ast y_1^2, w_2^\ast=0,w_3^\ast\geq 0$, if $y_1<ay_2$, $y_3=y_1^3$, $y^\ast=0$,
\item $w_3=3a^3y_2^2w_2, w_1^\ast=0, w_2^\ast=6\lambda_2a^3y_2w_2-3w_3^\ast a^3y_2^2$, if $y_1>ay_2$, $y_3=a^3y_2^3$, $y^\ast=(0,-3a^3y_2^2\lambda_2,\lambda_2)$, $\lambda_2>0$,
\item $w_3\leq 3a^3y_2^2w_2, w_1^\ast=0, w_2^\ast=-3w_3^\ast a^3y_2^2, w_3^\ast\geq 0$, if $y_1>ay_2$, $y_3=a^3y_2^3$, $y^\ast=0$,
\item $w_3=3y_1^2w_1$, $w_1\leq aw_2$, $w_2^\ast\leq0$, $w_3^\ast=-\frac 1{3y_1^2}(w_1^\ast-6\lambda_1y_1w_1+\frac{w_2^\ast}a)$, if $0\not=y_1=ay_2$, $y_3=y_1^3$, $y^\ast=(-3y_1^2\lambda_1,0,\lambda_1)$, $\lambda_1>0$,
\item $w_3=3a^3y_2^2w_2$, $w_1\geq aw_2$, $w_1^\ast\leq0$,$ w_3^\ast=-\frac1{3a^3y_2^2}(w_2^\ast-6\lambda_2a^3y_2w_2+aw_1^\ast)$,  if $0\not=y_1=ay_2$, $y_3=y_1^3$, $y^\ast=(0,-3a^3y_2^2\lambda_2,\lambda_2)$, $\lambda_2>0$,
\item $w_3=3y_1^2w_1$, $w_1= aw_2$, $w_3^\ast=-\frac1{3y_1^2}(w_1^\ast+\frac{w_2^\ast}a-6y_1w_1(\lambda_1+\lambda_2))$, if $0\not=y_1=ay_2$, $y_3=y_1^3$, $y^\ast=(-3y_1^2\lambda_1,-3a^3y_2^2\lambda_2,\lambda_1+\lambda_2)$, $\lambda_1,\lambda_2>0$,
\item $w_3\leq3\min\{y_1^2w_1,a^3y_2^2w_2\}$, $w_1^\ast,w_2^\ast\leq 0$, $w_3^\ast=-\frac 1{3y_1^2}(w_1^\ast+\frac{w_2^\ast}a)$, if $0\not=y_1=ay_2$, $y_3=y_1^3$, $y^\ast=0$,
\item $w_3=0$, $w_1^\ast=w_2^\ast=0$, if $y=0$, $y^\ast\not=0$,
\item $w_3\leq0$, $w_1^\ast=w_2^\ast=0$, $w_3^\ast\geq0$, if $y=y^\ast=0$,
\item $w^\ast=0$, if $y_3<\min\{y_1^3,a^3y_2^3\}$.
\end{enumerate}
To compute the limiting normal cone $N_{\Gr\widehat N_\Gamma}(\yb,\yba)$, let $(w^\ast,w)\in N_{\Gr\widehat N_\Gamma}(\yb,\yba)$ and consider sequences $(y_k,{y_k}^\ast)\to(\yb,\yba)$, $(w_k^\ast, w_k)\to(w^\ast,w)$ with $(w_k^\ast, w_k)\in\widehat N_{\Gr\widehat N_\Gamma}(y_k,y_k^\ast)$. Then, for infinitely many $k$ the pair $(y_k,y_k^\ast)$ belongs to one of the above subcases and we obtain
\begin{itemize}
\item $w_3=0$, $w_1^\ast=w_2^\ast=0$ in case of 1., 3., 9.,
\item $w_3\leq 0$, $w_1^\ast=w_2^\ast=0$, $w_3^\ast\geq 0$ in case of 2., 4., 8., 10.,
\item $w_3=0$, $w_1\leq aw_2$, $w_2^\ast=-aw_1^\ast\leq 0$ in case of 5.,
\item $w_3=0$, $w_1\geq aw_2$, $w_2^\ast=-aw_1^\ast\geq 0$ in case of 6.,
\item $w_3=0$, $w_1= aw_2$, $w_2^\ast=-aw_1^\ast$ in case of 7.,
\item $w^\ast=0$ in case of 11.
\end{itemize}
We can further conclude that
\begin{eqnarray*}N_{\Gr\widehat N_\Gamma}(\yb,\yba)&=&(\{(0,0,w_3^\ast)\}\times\{(w_1,w_2,0)\}) \cup(\{(0,0,0)\}\times\{(w_1,w_2,w_3)\})\\
&&\cup  (\{(0,0,w_3^\ast)\mv w_3^\ast\geq 0\}\times\{(w_1,w_2,w_3)\mv w_3\leq 0\})\\
&&\cup (\{(w_1^\ast, -aw_1^\ast,w_3^\ast)\mv w_1^\ast\geq0\}\times\{(w_1,w_2,0)\mv w_1\leq aw_2\})\\
&&\cup (\{(w_1^\ast, -aw_1^\ast,w_3^\ast)\mv w_1^\ast\leq0\}\times\{(w_1,w_2,0)\mv w_1\geq aw_2\}).
\end{eqnarray*}
We see that the limiting normal cone depends explicitly on the parameter $a$ as contrasted with the first-order and second-order derivatives of our problem functions $q_i$ at $\yb$. Hence in this situation it is not possible to get a point-based representation of the limiting normal cone by first-order and second-order derivatives.\hfill$\triangle$
\end{example}

\section{2--Regularity and 2-LICQ}
In \cite{Av85}, Avakov introduced the following concept of 2--regularity.
\begin{definition}
Let $g:\R^m\to\R^p$ be twice Fr\'echet differentiable at $\yb
\in\R^m$. We say that $g$ is {\em 2--regular} at the point $\yb$
in a direction $v\in\R^m$, if for all $\alpha\in\R^p$ the system
\begin{equation}\label{Eq2Reg1}
\nabla g(\yb)u+v^T\nabla^2 g(\yb)w=\alpha,\ \nabla g(\yb)w=0.
\end{equation}
has a solution $(u,w)\in\R^m\times\R^m$.
\end{definition}
Note that Avakov \cite{Av85} used this concept only for directions $v$ satisfying $\nabla g(\yb)v=0$, $v^T\nabla^2 g(\yb)v\in \range \nabla g(\yb)$.

Given a direction $v\in\R^n$ and positive scalars $\epsilon,\delta$, the set $V_{\epsilon,\delta}(v)$ is defined by
\[V_{\epsilon,\delta}(v):=\begin{cases}\{0\}\cup\{u\in \epsilon\B\setminus\{0\}\mv \Norm{\frac{u}{\norm{u}}-\frac{v}{\norm{v}}}\leq \delta\}&\mbox{if $v\not=0$,}\\
\epsilon\B&\mbox{if $v=0$.}\end{cases}\]

\begin{proposition}\label{Prop2Reg}
Let $g:\R^m\to\R^p$ be twice Fr\'echet differentiable at $\yb
\in\R^m$ and let $0\not=v\in\R^m$. Then the following statements
are equivalent:
\begin{enumerate}
\item[(a)]$g$ is 2--regular at $\yb$ in direction $v$,
\item[(b)]the implication
\begin{equation}\label{Eq2Reg2}
\nabla g(\yb)^T\lambda = 0,\ (v^T\nabla^2 g(\yb))^T\lambda+\nabla g(\yb)^T\mu=0\quad \Longrightarrow\quad \lambda=0
\end{equation}
holds true,
\item[(c)]there are positive numbers $\epsilon,\delta$ and
$\kappa$ such that for all $(y,z)\in(\yb,g(\yb))+V_{\epsilon,\delta}(v,\nabla g(\yb)v))$ with
$y\not=\yb$ and
$\norm{z-g(y)}\leq \delta\norm{y-\yb}^2$ one has
\[\dist{y,g^{-1}(z)}\leq\frac\kappa{\norm{y-\yb}}\norm{z-g(y)}, \]
\item[(d)]there are positive numbers $\tilde\epsilon$, $\tilde\delta$ and  $\kappa'$ such that for all $y\in \yb+  V_{\tilde\epsilon,\tilde\delta}(v)$ one has
\[\inf_{0\not=\lambda\in\R^p}\frac{\norm{\nabla g(y)^T\lambda}}{\norm{\lambda}}\geq \frac {\norm{y-\yb}}{\kappa'}.\]
\end{enumerate}
\end{proposition}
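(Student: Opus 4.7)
The plan is to establish the four assertions equivalent along the cycle $(a)\Leftrightarrow(b)\Leftrightarrow(d)\Leftrightarrow(c)$: the first two equivalences are algebraic and reduce to Farkas-type duality combined with a first-order Taylor expansion of $\nabla g$ along $v$, while the remaining equivalence $(c)\Leftrightarrow(d)$ is analytic and translates between right-inverse bounds on $\nabla g(y)$ and a quantitative form of metric regularity of $g$ at $y$.

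For $(a)\Leftrightarrow(b)$, I would view (a) as surjectivity onto $\R^p$ of the linear operator $(u,w)\mapsto \nabla g(\yb)u+v^T\nabla^2 g(\yb)w$ restricted to the subspace $\{(u,w):\nabla g(\yb)w=0\}$; negating it produces a nonzero $\lambda\in\R^p$ annihilating the image, and Farkas applied to the constraint in $w$ supplies the auxiliary multiplier $\mu$ with $(v^T\nabla^2 g(\yb))^T\lambda+\nabla g(\yb)^T\mu=0$, which is precisely the negation of (b). For $(b)\Rightarrow(d)$ the argument is by contradiction: if (d) fails, after normalisation one extracts $y_k=\yb+t_ku_k$ with $t_k\downarrow 0$, $u_k\to v/\norm v$, and unit $\lambda_k\to\hat\lambda\neq 0$ satisfying $\norm{\nabla g(y_k)^T\lambda_k}\leq t_k/k$. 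Taylor expansion
\[\nabla g(y_k)^T\lambda_k=\nabla g(\yb)^T\lambda_k+t_k(u_k^T\nabla^2 g(\yb))^T\lambda_k+o(t_k)\]
gives $\nabla g(\yb)^T\hat\lambda=0$ in the limit, and dividing by $t_k$ shows that the quotients $\nabla g(\yb)^T\lambda_k/t_k\in\range\nabla g(\yb)^T$ converge to $-\norm v^{-1}(v^T\nabla^2 g(\yb))^T\hat\lambda$. Closedness of $\range\nabla g(\yb)^T$ in finite dimension then produces the multiplier $\mu$ that contradicts (b). The converse $(d)\Rightarrow(b)$ is immediate: given a putative violator $(\lambda,\mu)$ of (b), the explicit choice $y_t=\yb+tv$, $\lambda_t=\lambda+t\mu$ gives, via the same expansion and the two identities in (b), $\norm{\nabla g(y_t)^T\lambda_t}=o(t)$ with $\norm{\lambda_t}\to\norm\lambda\neq 0$, contradicting the linear-in-$t$ lower bound of (d).

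For $(d)\Rightarrow(c)$, condition (d) is precisely the statement that $\nabla g(y)$ admits a right inverse of operator norm at most $\kappa'/\norm{y-\yb}$ for $y\ne\yb$ along $v$. A quantitative Lyusternik/Graves theorem then yields that $g$ is metrically regular at $(y,g(y))$ with modulus proportional to $\kappa'/\norm{y-\yb}$ on a ball of radius proportional to $\norm{y-\yb}/(\kappa' L)$ around $g(y)$, where $L$ bounds $\norm{\nabla^2 g}$ on a neighbourhood of $\yb$; the tolerance $\norm{z-g(y)}\leq\delta\norm{y-\yb}^2$ of (c) places $z$ well inside this ball once $\delta$ is small, producing the estimate of (c). For $(c)\Rightarrow(d)$ I would fix $y$ in direction $v$ with $t=\norm{y-\yb}$ and a unit $w\in\R^p$, and apply (c) to $z_n=g(y)+(t^2/n)w$: this yields $y_n\in g^{-1}(z_n)$ with $\norm{y_n-y}\leq\kappa t/n$, so $u_n:=n(y_n-y)/t^2$ satisfies $\norm{u_n}\leq\kappa/t$ and $\nabla g(y)u_n\to w$; any subsequential limit $u$ then solves $\nabla g(y)u=w$ with $\norm u\leq\kappa/t$, exhibiting a right inverse of $\nabla g(y)$ of norm $\kappa/t$, equivalent to (d) with $\kappa'=\kappa$. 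The main obstacle is the Graves-type estimate used in $(d)\Rightarrow(c)$: one has to keep track simultaneously of the modulus of metric regularity (which blows up like $1/\norm{y-\yb}$) and its radius of validity (which shrinks to zero as $y\to\yb$), and the exact quadratic tolerance $\norm{z-g(y)}\leq\delta\norm{y-\yb}^2$ in (c) is what keeps the resulting distance estimate of the natural order $\delta\kappa\norm{y-\yb}$.
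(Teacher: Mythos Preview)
Your argument is correct in outline but follows a genuinely different cycle from the paper. The paper establishes $(a)\Leftrightarrow(b)$ via the range--kernel duality (same as you), then imports $(b)\Leftrightarrow(c)$ wholesale from \cite[Theorem~2, Remark~5]{Gfr14a} by recognising (c) as metric pseudo-regularity of order~2, proves $(c)\Rightarrow(d)$ by quoting \cite[Lemma~1]{Gfr14a} and \cite[Example~9.44]{RoWe98}, and closes with $(d)\Rightarrow(b)$ exactly as you do. Your route is more self-contained: you bypass the external machinery for $(b)\Leftrightarrow(c)$ by proving $(b)\Leftrightarrow(d)$ directly via Taylor expansion and closedness of $\range\nabla g(\yb)^T$, and you handle $(c)\Leftrightarrow(d)$ with a hands-on Lyusternik--Graves argument and a limiting difference-quotient construction. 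This buys independence from \cite{Gfr14a} at the cost of redoing some of that paper's work in a special case.

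One technical caveat in your $(d)\Rightarrow(c)$ step: the hypothesis is twice Fr\'echet differentiability \emph{at} $\yb$ only, so $\nabla^2 g$ need not exist on a neighbourhood and your sentence ``where $L$ bounds $\norm{\nabla^2 g}$ on a neighbourhood of $\yb$'' is not literally justified. What you actually need for the quantitative Graves estimate is a bound of the form $\norm{\nabla g(y')-\nabla g(y)}\leq (L+\eta)\norm{y'-y}+\eta\norm{y-\yb}$ for $y,y'$ near $\yb$, and this \emph{does} follow from differentiability of $\nabla g$ at the single point $\yb$ (with $L=\norm{\nabla^2 g(\yb)}$ and $\eta$ arbitrarily small). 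With that correction the radius-of-validity bookkeeping you sketch goes through.
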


\begin{proof}
The equivalence $(a)\Leftrightarrow(b)$ is an immediate consequence of the fundamental theorem of linear algebra, which states in particular that for every matrix $A$ the kernel $\ker A$ is the orthogonal complement of the row space $\range(A^T)$. Hence, $g$ is 2--regular at $\yb$ in direction $v$, if and only if
\[\R^p\times\{0\}^p\subset\range\left(\begin{matrix}v^T\nabla^2 g(\yb)&\nabla g(\yb)\\\nabla g(\yb)&0\end{matrix}\right)=\left(\ker\left(\begin{matrix}(v^T\nabla^2 g(\yb))^T&\nabla g(\yb)^T\\\nabla g(\yb)^T&0\end{matrix}\right)\right)^\perp,\]
being equivalent to
\[\{0\}^p\times\R^p\supset\ker\left(\begin{matrix}(v^T\nabla^2 g(\yb))^T&\nabla g(\yb)^T\\\nabla g(\yb)^T&0\end{matrix}\right)\]
which is exactly \eqref{Eq2Reg2}.
Note that by \cite[Definition 1]{Gfr14a} statement $(c)$ is nothing else than the statement that the multifunction $\Psi(y):=\{g(y)\}$ is metrically pseudo-regular of order 2 in direction $(v,\nabla g(\yb)v)$ at $(\yb,g(\yb))$ and the equivalence $(b)\Leftrightarrow(c)$ has already been established in \cite[Theorem 2, Remark 5]{Gfr14a}.
 Next we show the implication $(c)\Rightarrow(d)$. By \cite[Lemma 1]{Gfr14a}, condition $(c)$ implies that there are $\epsilon',\delta',\kappa'>0$ such that for every $\tilde y\not=\yb$ with $(\tilde y,g(\tilde y))\in (\yb,g(\yb))+V_{\epsilon',\delta'}(v,\nabla g(\yb)v)$
 the multifunction $\Psi$ is metrically regular near $(\tilde y,g(\tilde y))$ with modulus $\kappa'/\norm{\tilde y-\yb}$. By using the inequality
 \begin{equation}\label{EqDistDir}\Norm{\frac{u}{\norm{u}}-\frac{u'}{\norm{u'}}}\leq
2\frac{\norm{u-u'}}{\max\{\norm{u},\norm{u'}\}}\end{equation}
 with $u=(\tilde y-\yb,g(\tilde y)-\yb)$ and $u'=\frac{\norm{\tilde y-\yb}}{\norm{v}}(v,\nabla g(\yb)v)$ and, by taking into account that
 $g(\tilde y)-g(\yb)=\norm{\tilde y-\yb}(\nabla g(\yb)\frac{v}{\norm{v}}+\nabla g(\yb)(\frac{\tilde y-\yb}{\norm{\tilde y-\yb}}-\frac{v}{\norm{v}}))+\oo(\norm{\tilde y-\yb}$, we obtain
\begin{eqnarray*}\lefteqn{\Norm{\frac{(\tilde y-\yb,g(\tilde y)-g(\yb))}{\norm{(\tilde y-\yb,g(\tilde y)-g(\yb))}}-\frac {(v,\nabla g(\yb)v)}{\norm{(v,\nabla g(\yb)v)}}}}\\
&\leq& 2\frac{\norm{u-u'}}{\norm{\tilde y-\yb}}
=2\Norm{\left(\frac{\tilde y-\yb}{\norm{\tilde y-\yb}}-\frac v{\norm{v}},
\nabla g(\yb)(\frac{\tilde y-\yb}{\norm{\tilde y-\yb}}-\frac v{\norm{v}})+\frac{\oo(\norm{\tilde y-\yb})}{\norm{\tilde y-\yb}}\right)}.\end{eqnarray*}
Hence we can choose $\tilde \epsilon>0$ and $\tilde\delta>0$ small enough, such that for all $\tilde y\in\yb+ V_{\tilde\epsilon,\tilde\delta}(v)$ we have $(\tilde y,g(\tilde y))\in (\yb,g(\yb))+ V_{\epsilon',\delta'}(v,\nabla g(\yb)v)$.  Now statement (d) follows from \cite[Example 9.44]{RoWe98}. Finally, we prove the implication $(d)\Rightarrow(b)$ by contraposition. Assuming that there are $0\not=\bar\lambda\in\R^p$, $\bar\mu\in\R^p$ with $\nabla g(\yb)^T\bar\lambda = 0$ and $(v^T\nabla^2 g(\yb))^T\bar\lambda+\nabla g(\yb)^T\bar\mu=0$, we have
\[\nabla g(\yb+tv)^T(\bar\lambda+t\bar\mu)=\nabla g(\yb)^T\bar\lambda +t((v^T\nabla^2 g(\yb))^T\bar\lambda+\nabla g(\yb)^T\bar\mu)+\oo(t)=\oo(t)\]
and therefore
\[\inf_{0\not=\lambda\in\R^p}\frac{\norm{\nabla g(\yb+tv)^T\lambda}}{\norm{\lambda}}=\oo(t)\]
contradicting $(d)$.
\end{proof}
\begin{remark}\label{Rem2Reg}Statement (d) of Proposition \ref{Prop2Reg} says that for every $y\in \yb+  V_{\tilde\epsilon,\tilde\delta}(v)$ with $y\not=\yb$ the Jacobian $\nabla g(y)$ has full rank and its smallest singular value is bounded below by $\norm{y-\yb}/\kappa'$. Consequently, for every right hand side $\alpha\in\R^p$ the system $\nabla g(y)u=\alpha$ has a solution $u$ satisfying
\[\norm{u}\leq \frac{\kappa'\norm{\alpha}}{\norm{y-\yb}}.\]
\end{remark}

The following lemma is useful for estimating index sets of active
constraints:
\begin{lemma}\label{LemEstActConstr}
Let $g:\R^m\to\R^p$ be twice Fr\'echet differentiable  at
$\yb\in\R^m$, $g(\yb)=0$, let $I\subset\{1,\ldots,p\}$ and let
$v\in\R^m$ with $\nabla g(\yb)v=0$ be given. Then there are
sequences $(t_k)\downarrow 0$, $(v_k)\to v$ such that
\begin{equation}\label{EqActConstr1}\lim_{k\to\infty}t_k^{-2}g_i(\yb+t_kv_k)=0,\ i\in I, \limsup_{k\to\infty} t_k^{-2}g_i(\yb+t_kv_k)\leq 0,\ i\in \{1,\ldots,p\}\setminus I\end{equation}
if and only if there is some $\bar z\in\R^m$ with
\begin{equation}\label{EqActConstr2}\nabla g_i(\yb)\bar z+v^T\nabla^2 g_i(\yb)v\begin{cases}=0&\mbox{if $i\in I$}\\\leq 0&\mbox{if $i\in\{1,\ldots,p\}\setminus I$.}\end{cases}\end{equation}
\end{lemma}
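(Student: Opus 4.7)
My plan is to base both implications on the second-order Taylor expansion of $g_i$ at $\bar y$. Since $g(\bar y)=0$ and $\nabla g(\bar y)v=0$, for any bounded sequence $(v_k)\to v$ we have
\begin{equation*}
g_i(\bar y+tv_k)=t\nabla g_i(\bar y)(v_k-v)+\frac{t^2}{2}v_k^T\nabla^2 g_i(\bar y)v_k+o(t^2),
\end{equation*}
and I shall exploit this identity in both directions.

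For the sufficiency ($\Leftarrow$), given $\bar z$ satisfying \eqref{EqActConstr2}, I would pick any sequence $t_k\downarrow 0$ and set $v_k:=v+(t_k/2)\bar z$, so $v_k\to v$. Substituting into the expansion above and dividing by $t_k^2$ yields
\begin{equation*}
t_k^{-2}g_i(\bar y+t_kv_k)\to\frac{1}{2}\bigl(\nabla g_i(\bar y)\bar z+v^T\nabla^2 g_i(\bar y)v\bigr),
\end{equation*}
which, by \eqref{EqActConstr2}, equals $0$ for $i\in I$ and is $\le 0$ for $i\in\{1,\dots,p\}\setminus I$. This is exactly \eqref{EqActConstr1}.

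For the necessity ($\Rightarrow$), I would invoke a linear theorem of the alternative (Motzkin's transposition theorem, i.e.\ the mixed equality/inequality form of Farkas' lemma) applied to \eqref{EqActConstr2} seen as a system in the unknown $z$: it admits a solution if and only if every $\nu\in\R^p$ with $\nabla g(\bar y)^T\nu=0$ and $\nu_i\ge 0$ for $i\in\{1,\dots,p\}\setminus I$ satisfies $v^T\nabla^2(\nu^Tg)(\bar y)v\le 0$. To verify this consequence, I would fix such a $\nu$ and evaluate $t_k^{-2}\nu^T g(\bar y+t_kv_k)$ in two ways. Since $\nu^T g(\bar y)=0$ and $\nu^T\nabla g(\bar y)=0$, the Taylor expansion yields
\begin{equation*}
t_k^{-2}\nu^T g(\bar y+t_kv_k)\to\frac{1}{2}v^T\nabla^2(\nu^Tg)(\bar y)v.
\end{equation*}
Using \eqref{EqActConstr1} together with $\nu_i\ge 0$ for $i\notin I$, a termwise $\limsup$ estimate (the summands with $i\in I$ converge to $0$, and those with $i\notin I$ have $\limsup\le 0$) gives $\limsup_{k\to\infty}t_k^{-2}\nu^T g(\bar y+t_kv_k)\le 0$, whence $v^T\nabla^2(\nu^Tg)(\bar y)v\le 0$, as required.

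I do not anticipate any real obstacle here: the two ingredients (second-order Taylor expansion and the mixed Farkas/Motzkin alternative) are classical, and the crucial simplification is that $\nabla g(\bar y)v=0$ and $\nabla g(\bar y)^T\nu=0$ kill the first-order terms on both sides, leaving the second-order quantities directly comparable. The only point needing mild care is the $\limsup$ of a finite sum, which is valid here because $\nu$ has nonnegative components on $\{1,\dots,p\}\setminus I$ and each summand has a finite $\limsup$.
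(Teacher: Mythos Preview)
Your proof is correct. The ``if'' direction coincides with the paper's: both take $v_k=v+\tfrac{t_k}{2}\bar z$ and read off \eqref{EqActConstr1} from the second-order expansion.

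For the ``only if'' direction you take a genuinely different route. The paper argues \emph{constructively} via Hoffman's error bound: it introduces the polyhedra $\Delta(b)=\{z\mid \nabla g_i(\bar y)z+b_i=0,\ i\in I;\ \nabla g_i(\bar y)z+b_i\le 0,\ i\notin I\}$, observes that $2(v_k-v)/t_k\in\Delta(b^k)$ for a suitable $b^k\to v^T\nabla^2 g(\bar y)v$, and uses Hoffman's bound to extract a bounded selection $z_k\in\Delta(b^k)$ whose limit $\bar z\in\Delta(v^T\nabla^2 g(\bar y)v)$ satisfies \eqref{EqActConstr2}. Your argument is \emph{dual}: you invoke the Motzkin/Farkas alternative for the mixed system \eqref{EqActConstr2} and verify the dual condition $v^T\nabla^2(\nu^Tg)(\bar y)v\le 0$ directly, exploiting $\nabla g(\bar y)^T\nu=0$ to kill the first-order term in the expansion of $\nu^Tg$. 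Both approaches rest on a classical polyhedral tool; yours is shorter and avoids the sequence-extraction step, while the paper's has the advantage of producing $\bar z$ explicitly as a limit (and fits the Hoffman-bound machinery used repeatedly elsewhere in the paper). Your remark about the $\limsup$ of a finite sum is apt and suffices here since all summands with $i\notin I$ have $\limsup\le 0$ and the $i\in I$ summands actually converge.
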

\begin{proof}
To show the ''only if'' part, let $(t_k)\downarrow 0$ and
$(v_k)\to v$ be given, such that \eqref{EqActConstr1} holds and
consider for every $b\in\R^p$ the set
\[\Delta(b):=\left\{z\in\R^m\mv \nabla g_i(\yb)z+b_i=0,\ i\in  I,\
\nabla g_i(\yb)z+b_i\leq0,\ i\in \{1,\ldots,p\}\setminus
I\right\}.\] By Hoffman's Lemma there is some constant
$\tilde\beta$ such that for all $z\in\R^m$ and all $b$ with
$\Delta(b)\not=\emptyset$  we have
\[\dist{z,\Delta(b)}\leq\tilde\beta(\sum_{i\in I}\vert \nabla g_i(\yb)z+b_i\vert +\sum_{i\in\{1,\ldots,p\}\setminus I}\max\{\nabla g_i(\yb)z+b_i,0\}).\]
For every $k$ let $r^k:=2g(\yb+t_kv_k)/t_k^2-(2\nabla
g(\yb)(v_k-v)/t_k+ v^T\nabla^2 g(\yb)v)$. Because of $(v_k)\to v$, $g(\yb)=0$ and $\nabla g(\yb)v=0$
we have
\[0=\lim_{k\to\infty}\frac{g(\yb +t_kv_k)-(g(\yb)+t_k\nabla g(\yb)v_k+\frac 12 t_k^2 v^T\nabla^2g(\yb)v)}{t_k^{2}/2}=\lim_{k\to\infty}r^k.\]
Setting
\[b^k_i:=\begin{cases}v^T\nabla^2 g_i(\yb)v +r_i^k-2g_i(\yb+t_kv_k)/t_k^2&\mbox{if $i\in I$}\\
v^T\nabla^2 g_i(\yb)v
+r_i^k-2\max\{g_i(\yb+t_kv_k),0\}/t_k^2&\mbox{if
$i\in\{1,\ldots,p\}\setminus I$,}\end{cases}\] we have
$2(v_k-v)/t_k\in \Delta(b^k)$ and therefore there is some
$z_k\in\Delta(b^k)$ satisfying
\[\norm{z_k}=\dist{0,\Delta(b^k)}\leq\tilde\beta(\sum_{i\in I}\vert b_i^k\vert +\sum_{i\in\{1,\ldots,p\}\setminus I}\max\{b_i^k,0\}).\]
Because of \eqref{EqActConstr1} and $(r^k)\to 0$ we have $(b^k)\to
v^T\nabla^2 g(\yb)v$. Hence the sequence $(z_k)$ is uniformly
bounded and, by eventually passing to a subsequence, $(z_k)$ is
convergent to some $\bar z$. Then we also have $\bar
z\in\Delta(v^T\nabla^2 g(\yb)v)$ and therefore $\bar z$ fulfills
\eqref{EqActConstr2}.

The ''if'' part follows immediately from the observation that,
for every  $\bar z\in\R^m$,  we have
\begin{eqnarray*}\lim_{t\downarrow 0}t^{-2}g(\yb+tv+\frac 12 t^2\bar z)&=&\lim_{t\downarrow 0}t^{-2}\left(g(\yb)+t\nabla g(\yb)v+\frac 12 t^2(\nabla g(\yb)\bar z+v^T\nabla^2 g(\yb)v)\right)\\
&=&\frac 12(\nabla g(\yb)\bar z+v^T\nabla^2 g(\yb)v)
\end{eqnarray*}
due to \cite[Theorem 13.2]{RoWe98}.
\end{proof}
The notion defined below represents a crucial CQ, needed in all our main results.
\begin{definition}\label{Def2LICQ}
Let $v\in\Tlin(\yb)$. We say that {\em 2-LICQ} holds at $\yb$ in direction $v$ for the constraints $q_i(y)\leq 0$, $i=1,\ldots,l$, if there are positive numbers $\epsilon,\delta$, such that for every $y\in(\yb+V_{\epsilon,\delta}(v))\cap\Gamma$, $y\not=\yb$, the mapping $(q_i)_{i\in\I(y)}$ is 2-regular at $\yb$ in direction $v$.
\end{definition}

We now present  a second-order sufficient condition for 2-LICQ.
\if{
We denote by $Z(y,y^\ast;v)$ the solution set of the linear program
\begin{equation}\label{EqDPDirMult}
\min_z-{y^\ast}^Tz\quad\mbox{subject to}\quad \nabla q_i(y)z+v^T\nabla^2 q_i(y)v\leq 0,\ i\in\I(y),
\end{equation}
which is the dual program to \eqref{EqLPDirMult},
and we denote  by $\Xi(y,y^\ast;v)$ the feasible region of \eqref{EqDPDirMult}. Further we define
\begin{eqnarray*}\lefteqn{\J(y,y^\ast;v)}\\
&:=&\left\{\J\subset\I(y;v)\mv \exists z\in\Xi(y,y^\ast;v): \J=\{i\in\I(y;v)\mv \nabla q_i(y)z+v^T\nabla^2 q_i(y)v=0\}\right\}.
\end{eqnarray*}
}\fi
We denote by $\Zb(v)$ the solution set of the linear program
\begin{equation}\label{EqDPDirMult}
\min_z-{\yba}^Tz\quad\mbox{subject to}\quad \nabla q_i(\yb)z+v^T\nabla^2 q_i(\yb)v\leq 0,\ i\in\Ib,
\end{equation}
which is the dual program to \eqref{EqLPDirMult} at $(\yb,\yba)$,
and we denote  by $\bar\Xi(v)$ the feasible region of \eqref{EqDPDirMult}. Take $z\in \bar\Xi(v)$ and define the following index subset
\[\J(z):= \{i\in\Ib(v)\mv \nabla q_i(\yb)z+v^T\nabla^2 q_i(\yb)v=0\}.\]
Consider now the collection of index subsets $\Jb(v):=\{\J(z)\mv z\in \bar\Xi(v)\}$. In what follows we say that an index set $\hat \J\in \Jb(v)$ is  {\em maximal }, if it is  maximal with respect to the inclusion order, i.e. for any index set $\J\in \Jb(v)$ such that $\hat \J\subset J$ we have $\hat \J=\J$. Note that for each element $\J\in\Jb(v)$ we can always find a maximal element $\hat\J$ of $\Jb(v)$ such that $\J\subset\hat \J$.

\begin{proposition}\label{PropSuffCond2LICQ}
Let $v\in \Tlin(\yb)$  and assume that for every maximal index set $\hat\J\in \Jb(v)$ the mapping $(q_i)_{i\in\hat\J}$ is 2-regular at $\yb$ in direction $v$. Then 2-LICQ holds at $\yb$ in direction $v$.
\end{proposition}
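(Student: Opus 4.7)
The plan is to argue by contradiction: assuming 2-LICQ fails at $\yb$ in direction $v$, I will locate an index subset whose 2-regularity is ruled out by the hypothesis. Negating Definition \ref{Def2LICQ}, for every $k$ one can choose $y_k \in (\yb+V_{1/k,1/k}(v))\cap\Gamma$ with $y_k\neq\yb$ such that $(q_i)_{i\in\I(y_k)}$ is not 2-regular at $\yb$ in direction $v$. Because the index sets $\I(y_k)\subset\{1,\ldots,l\}$ range over a finite family, a subsequence may be chosen along which $\I(y_k)=\I_*$ is constant. I treat $v\neq 0$ explicitly (the case $v=0$ is analogous after rescaling, e.g.~$t_k=\sqrt{\norm{y_k-\yb}}$): set $t_k:=\norm{y_k-\yb}/\norm{v}$ and $v_k:=(y_k-\yb)/t_k$, so that $t_k\downarrow 0$, $v_k\to v$ by the direction selection of $V_{1/k,1/k}(v)$, and $y_k=\yb+t_k v_k$.

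First I would verify $\I_*\subset\Ib(v)$. Continuity of $q_i$ and $q_i(\yb)<0$ for $i\not\in\Ib$ force $\I_*\subset\Ib$. For $i\in\I_*$ the relation $0=q_i(\yb+t_k v_k)=t_k\nabla q_i(\yb)v_k+\oo(t_k)$, divided by $t_k$, yields $\nabla q_i(\yb)v=0$ in the limit.

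The central step is to certify $\I_*$ inside an element of $\Jb(v)$. To this end I would apply Lemma \ref{LemEstActConstr} to $g:=(q_i)_{i\in\Ib}$ with the subset $I:=\I_*$ and the sequences $(t_k),(v_k)$: since $y_k\in\Gamma$ gives $q_i(y_k)\leq 0$ for $i\in\Ib$ and $q_i(y_k)=0$ for $i\in\I_*$, the hypothesis \eqref{EqActConstr1} holds. The ``only if'' part of the lemma furnishes $\bar z\in\R^m$ fulfilling \eqref{EqActConstr2}, which is precisely the statement $\bar z\in\bar\Xi(v)$ with $\J(\bar z)\supset\I_*$. Enlarging $\J(\bar z)$ to a maximal element $\hat\J\in\Jb(v)$ yields $\I_*\subset\hat\J$.

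It remains to observe that 2-regularity is hereditary under restriction of the constraint family: by characterization (b) of Proposition \ref{Prop2Reg}, any pair $(\lambda',\mu')$ obstructing the implication \eqref{Eq2Reg2} for $(q_i)_{i\in\I_*}$ extends by zero entries on $\hat\J\setminus\I_*$ to an obstructing pair for $(q_i)_{i\in\hat\J}$. Thus the assumed 2-regularity of $(q_i)_{i\in\hat\J}$ in direction $v$ forces the same for $(q_i)_{i\in\I_*}$, contradicting the choice of $y_k$. The main technical difficulty lies in the middle step: one must first pin down the limiting direction by proving $\I_*\subset\Ib(v)$, and only then translate the asymptotic vanishing of $q_i(y_k)/t_k^2$ into the dual-feasibility certificate $\bar z\in\bar\Xi(v)$ that embeds $\I_*$ into a maximal element of $\Jb(v)$ where the hypothesis can be invoked.
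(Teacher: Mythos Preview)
Your overall strategy mirrors the paper's proof: contrapose, extract a constant active index set $\I_*$ along a subsequence, embed it into an element of $\Jb(v)$ via Lemma~\ref{LemEstActConstr}, and then use that 2-regularity is inherited by subfamilies. The heredity argument and the final contradiction are correct.

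There is, however, a genuine gap in your invocation of Lemma~\ref{LemEstActConstr}. You apply it to $g:=(q_i)_{i\in\Ib}$, but the lemma's standing hypothesis demands $\nabla g(\yb)v=0$ for \emph{all} components of $g$. This holds only for indices $i\in\Ib(v)$; for $i\in\Ib\setminus\Ib(v)$ one has $\nabla q_i(\yb)v<0$, and the lemma is not applicable (in its proof the limit $\lim_k r^k=0$ uses $\nabla g(\yb)v=0$ essentially; otherwise a term of order $1/t_k$ survives). Consequently you cannot directly conclude $\bar z\in\bar\Xi(v)$, since $\bar\Xi(v)$ requires the inequality $\nabla q_i(\yb)\bar z+v^T\nabla^2 q_i(\yb)v\le 0$ for \emph{every} $i\in\Ib$.

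The paper closes this gap in two steps. First, apply Lemma~\ref{LemEstActConstr} only to $(q_i)_{i\in\Ib(v)}$, which is legitimate since you have already shown $\I_*\subset\Ib(v)$; this yields some $z$ with $\nabla q_i(\yb)z+v^T\nabla^2 q_i(\yb)v=0$ for $i\in\I_*$ and $\le 0$ for $i\in\Ib(v)\setminus\I_*$. Second, replace $z$ by $\bar z:=z+\alpha v$ with $\alpha>0$ large: since $\nabla q_i(\yb)v=0$ for $i\in\Ib(v)$ the previous relations are unaffected, while for $i\in\Ib\setminus\Ib(v)$ the strict inequality $\nabla q_i(\yb)v<0$ forces $\nabla q_i(\yb)\bar z+v^T\nabla^2 q_i(\yb)v\le 0$ once $\alpha$ is sufficiently large. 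This gives $\bar z\in\bar\Xi(v)$ with $\I_*\subset\J(\bar z)$, after which your concluding argument goes through verbatim.
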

\begin{proof}
By contraposition. Assuming on the contrary that 2-LICQ does not hold at $\yb$ in direction $v$, there are sequences $(t_k)\downarrow 0$, $(v_k)\to v$ such that
$(q_i)_{i\in\I(y_k)}$ is not 2-regular at $\yb$ in direction $v$, where $y_k:=\yb+t_kv_k\not=\yb$. By passing to a subsequence we can assume that $\I(y_k)=\tilde\I$ holds for all $k$. It follows that
\[\nabla q_i(\yb)v=\lim_{k\to\infty}\frac {q_i(y_k)-q_i(\yb)}{t_k}\begin{cases}=0,&i\in \tilde\I\\\leq 0,&i\in\Ib\setminus\tilde \I\end{cases}\]
showing $\tilde \I\subset\Ib(v)$, and by using Lemma \ref{LemEstActConstr}, there is some $z$ satisfying
\[\nabla q_i(\yb)z+v^T\nabla^2q_i(\yb)v\begin{cases}=0,&i\in\tilde \I\\\leq 0,&i\in\Ib(v)\setminus\tilde \I.\end{cases}\]
Putting $\bar z=z+\alpha v$ for $\alpha$ sufficiently large, we obtain
\[\nabla q_i(\yb)\bar z+v^T\nabla^2q_i(\yb)v\begin{cases}=0,&i\in\tilde \I\\\leq 0,&i\in\Ib\setminus\tilde \I,\end{cases}\]
showing
$\tilde \I\subset \J(\bar z)\in \Jb(v)$. Choosing $\hat J$ as a maximal index set with $\J(\bar z)\subset \hat J$, the mapping $(q_i)_{i\in\hat \J}$ is 2-regular at $\yb$ in direction $v$ and we can conclude that $(q_i)_{i\in\tilde \I}$ is 2-regular at $\yb$ in direction $v$, a contradiction.
\end{proof}

\begin{proposition}\label{Prop2LICQ}
Let $v\in \Tlin(\yb)$  and a maximal index set $\hat\J\in \Jb(v)$
be given and assume that $(q_i)_{i\in\hat\J}$ is 2--regular in direction $v$ at $\yb$. Then for every subset $\J\subset\hat\J$ there exists some $\bar\tau>0$ and a mapping $\hat y:[0,\bar\tau]\to \Gamma$ such that $\hat y(0)=\yb$, $\I(\hat y(\tau))=\J$, LICQ is fulfilled at $\hat y(\tau)$ for every $\tau\in (0,\bar\tau)$  and
\[\lim_{\tau\downarrow 0}\frac {\hat y(\tau)-\yb}\tau=v.\]
\end{proposition}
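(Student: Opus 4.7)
The plan is to construct $\hat y(\tau)$ as a small correction of the natural quadratic curve $y_0(\tau):=\yb+\tau v+\tfrac{1}{2}\tau^2\bar z$, where $\bar z\in\bar\Xi(v)$ is chosen with $\J(\bar z)=\hat\J$. Such a $\bar z$ exists by the hypothesis on $\hat\J$, and maximality of $\hat\J$ in $\Jb(v)$ forces the strict inequality $A_i:=\nabla q_i(\yb)\bar z+v^T\nabla^2 q_i(\yb)v<0$ for every $i\in\Ib(v)\setminus\hat\J$. A second-order Taylor expansion, completely analogous to the ``if'' part of Lemma \ref{LemEstActConstr}, yields $q_i(y_0(\tau))=o(\tau^2)$ for $i\in\hat\J$, $q_i(y_0(\tau))\le\tfrac{1}{2}\tau^2 A_i+o(\tau^2)$ for $i\in\Ib(v)\setminus\hat\J$, and $q_i(y_0(\tau))<0$ strictly at leading order for the remaining indices.

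The next step exploits that $(q_i)_{i\in\J}$ inherits 2--regularity at $\yb$ in direction $v$ from $(q_i)_{i\in\hat\J}$: by characterization \eqref{Eq2Reg2}, any $\lambda\in\R^{|\J|}$ satisfying the hypothesis for $\tilde g:=(q_i)_{i\in\J}$ extends by zeros to an element of $\R^{|\hat\J|}$ fulfilling the hypothesis for $(q_i)_{i\in\hat\J}$, and must therefore vanish. Applying Proposition \ref{Prop2Reg}(c) to $\tilde g$ with target $0$ at the base point $y_0(\tau)$ (whose directional and size conditions are satisfied for $\tau$ small) then produces $\hat y(\tau)\in\tilde g^{-1}(0)$ with $\|\hat y(\tau)-y_0(\tau)\|\le \kappa\|\tilde g(y_0(\tau))\|/\|y_0(\tau)-\yb\|=o(\tau)$. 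Setting $\hat y(0):=\yb$, the properties $q_i(\hat y(\tau))=0$ for $i\in\J$ and $(\hat y(\tau)-\yb)/\tau\to v$ are immediate. LICQ at $\hat y(\tau)$ for small $\tau>0$ follows from Proposition \ref{Prop2Reg}(d) applied to $(q_i)_{i\in\hat\J}$: for such $\tau$ we have $\hat y(\tau)\in\yb+V_{\tilde\epsilon,\tilde\delta}(v)$, so $\nabla\hat g(\hat y(\tau))$ has full row rank $|\hat\J|$, and consequently its submatrix indexed by $\J$ has full row rank $|\J|$.

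The main obstacle is verifying $\I(\hat y(\tau))=\J$ exactly, i.e.\ $q_i(\hat y(\tau))<0$ for $i\notin\J$. The cases $i\notin\Ib$ and $i\in\Ib\setminus\Ib(v)$ are routine, via continuity and the dominant first-order term $\tau\nabla q_i(\yb)v<0$. The delicate case is $i\in\Ib(v)\setminus\hat\J$ (and, analogously, $i\in\hat\J\setminus\J$), where $q_i(y_0(\tau))\le-c\tau^2$ holds at second order while the naive bound $|\nabla q_i(\yb)(\hat y-y_0)|\le\|\nabla q_i(\yb)\|\cdot o(\tau)$ may not preserve strict negativity. The resolution leverages the specific structure of the correction delivered by Proposition \ref{Prop2Reg}(c): up to higher-order terms $\hat y-y_0$ lies in the row span of $\nabla\tilde g(y_0)$, and combining the identities $\nabla q_j(y_0)(\hat y-y_0)=-q_j(y_0)+O(\|\hat y-y_0\|^2)=o(\tau^2)$ for $j\in\J$ with the 2--regular lower bound on the singular values of $\nabla\hat g$ near $\yb$ in direction $v$ allows one to sharpen the estimate to $\nabla q_i(\yb)(\hat y-y_0)=o(\tau^2)$. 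Alternatively, one may apply Proposition \ref{Prop2Reg}(c) to the full mapping $\hat g=(q_i)_{i\in\hat\J}$ with a refined target $z^*_i=0$ for $i\in\J$ and $z^*_i=-c_i\tau^2$ for $i\in\hat\J\setminus\J$ with $c_i>0$ chosen small enough that the correction $\hat y-y_0$ does not destroy the strict negativity inherited from $y_0$ at the remaining indices. Either route yields $q_i(\hat y(\tau))\le\tfrac{1}{2}\tau^2 A_i+o(\tau^2)<0$ for small $\tau>0$, completing the proof.
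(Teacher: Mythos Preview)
Your identification of the main obstacle is correct, but neither of your proposed resolutions actually closes the gap.

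Route (a) fails for two reasons. First, Proposition \ref{Prop2Reg}(c) gives no control over the \emph{direction} of the correction $\hat y-y_0$; it only asserts existence of a preimage at the stated distance. Second, even granting $\hat y-y_0\in\range\nabla\tilde g(y_0)^T$, the singular-value lower bound is only $\sigma_{\min}(\nabla\tilde g(y_0))\geq c\tau$. Writing $\hat y-y_0=\nabla\tilde g(y_0)^T\alpha$ and combining $\nabla\tilde g(y_0)\nabla\tilde g(y_0)^T\alpha=o(\tau^2)$ with $\sigma_{\min}^2\geq c^2\tau^2$ yields merely $\|\alpha\|=o(1)$, hence $\nabla q_i(\yb)(\hat y-y_0)=O(1)\cdot o(1)$ --- no improvement over the naive $o(\tau)$ bound, and certainly not $o(\tau^2)$. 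Moreover, for $i\in\hat\J\setminus\J$ one has $A_i=0$, so the final inequality $\tfrac12\tau^2 A_i+o(\tau^2)<0$ fails outright there.

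Route (b) fails because with a fixed target $-c_i\tau^2$ on $\hat\J\setminus\J$ the residual $\|z^*-\hat g(y_0(\tau))\|$ is of exact order $\tau^2$, so Proposition \ref{Prop2Reg}(c) delivers a correction of order $\tau$, not $o(\tau)$. For any fixed $c_i>0$ this $O(\tau)$ perturbation eventually dominates the $-c'\tau^2$ margin at every index $i\in\Ib(v)\setminus\hat\J$ as $\tau\downarrow 0$; making the $c_i$ small shrinks the constant but not the order.

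The paper's argument handles the two troublesome index sets by genuinely different means. It applies Proposition \ref{Prop2Reg}(c) to the \emph{full} map $(q_i)_{i\in\hat\J}$ with target $0$ on $\J$ and $-\tau^4$ on $\hat\J\setminus\J$; the residual is then $o(\tau^2)$, the correction is $o(\tau)$, and $q_i(\hat y(\tau))=-\tau^4<0$ for $i\in\hat\J\setminus\J$ holds exactly. For $i\in\Ib(v)\setminus\hat\J$ direct estimation is abandoned in favour of a contraposition: if $\liminf_k\tau_k^{-2}q_j(\hat y(\tau_k))\geq 0$ for some such $j$, Lemma \ref{LemEstActConstr} (applied to $(q_i)_{i\in\hat\J}$ together with $-q_j$) produces a vector $z$ with $\nabla q_i(\yb)z+v^T\nabla^2q_i(\yb)v=0$ for $i\in\hat\J$ and $\geq 0$ for $i=j$. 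A suitable convex combination of $z$ with $\hat z$ then lies in $\bar\Xi(v)$ with active set strictly containing $\hat\J$, contradicting maximality. This maximality-based contradiction is the missing idea in your proof.
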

\begin{proof}Let $\J\subset\hat\J$  be arbitrarily fixed and consider 
an element $\hat z\in \bar\Xi(v)$ with
\[\hat \J=\{i\in \Ib(v)\mv \nabla q_i(\yb)\hat z+  v^T\nabla^2 q_i(\yb) v=0\}.\]
Since $(q_i)_{i\in \hat\J}$ is assumed to be 2--regular in
direction $v$ and
\begin{eqnarray*}q_i(\yb+\tau v+\frac 12 \tau^2\hat
z)&=&q_i(\yb)+\tau\nabla q_i(\yb) v+\frac 12\tau^2(\nabla q_i(\yb)\hat
z+ v^T\nabla^2q_i(\yb) v)+\oo(\tau^2)\\
&=&\oo(\tau^2), i\in\hat \J,\end{eqnarray*}
by means of Proposition \ref{Prop2Reg}(c), we can find for every sufficiently small $\tau>0$
some $\hat y(\tau)$ satisfying  $q_i(\hat y(\tau))=0, i\in\J$, $q_i(\hat y(\tau))= -\tau^4$, $i\in \hat \J\setminus\J$ and
\[\norm{\hat y(\tau)-(\yb+\tau  v+\frac 12 \tau^2\hat z)}\leq \frac{\kappa\norm{r(\tau)}}{\norm{\tau v+\frac 12 \tau^2\hat z}}=\oo(\tau),\]
where \[r_i(\tau):=\begin{cases}q_i(\yb+\tau v+\frac 12\tau^2\hat z)& i\in\J,\\q_i(\yb+\tau v+\frac 12\tau^2\hat z)+\tau^4& i\in\hat\J \setminus\J.\end{cases}\]
We will now show by contraposition that there
is some constant $c>0$ such that $q_i( \hat y(\tau))<-c\tau^2$, $i\in\Ib(v)\setminus \hat\J$, for all $\tau>0$ sufficiently small. Assume on the contrary
that there is an index $j\in\Ib(v)\setminus \hat\J$ and a
sequence $(\tau_k)\downarrow 0$ such that
$\liminf_{k\to\infty}\tau_k^{-2}q_j(\hat y(\tau_k))\geq 0$. Applying Lemma
\ref{LemEstActConstr} to the mapping $(g_i)_{i\in \hat\J\cup\{j\}}$ given by $g_i=q_i$, $i\in\hat\J$ and $g_j=-q_j$, we can
find some $z$ with $\nabla q_i(\yb)z+v^T\nabla^2q_i(\yb) v=0$, $i\in\hat\J$, and $\nabla q_j(\yb)z+v^T\nabla^2q_j(\yb) v\geq0$. The number \[\alpha:=\max\{\alpha\in
[0,1]\mv \nabla q_i(\yb)((1-\alpha)\hat z+\alpha z)+ v^T\nabla^2q_i(\yb) v\leq0, i\in\Ib(v)\setminus \hat\J\}\] is positive because of  $\nabla q_i(\yb)\hat
z+ v^T\nabla^2q_i(\yb)v<0$, $i\in\Ib(v)\setminus\hat\J$.
Thus $z_\alpha:=(1-\alpha)\hat z+\alpha z\in \bar\Xi(v)$, but by
construction, the index set $\hat\J$ is strictly contained in
$\{i\in\Ib(v)\mv \nabla q_i(\yb)
z_\alpha+ v^T\nabla^2q_i(\yb) v=0\}$ contradicting the maximality of
$\hat\J$. Therefore our claim is proved. Since we also have
$q_i(\hat y(\tau))<\nabla q_i(\yb) v/2<0$, $i\in\Ib\setminus \Ib(v)$, and $q_i(\hat y(\tau))<q_i(\yb) /2<0$, $i\not\in\Ib$, for all
$\tau>0$ sufficiently small, we see that $\hat y(\tau)\in\Gamma$ and the
constraints active at $\hat y(\tau)$ are exactly those given by $\J$.
Further, our assumption of 2--regularity ensures that LICQ is
fulfilled at $\hat y(\tau)$, cf. Remark \ref{Rem2Reg}, and this completes the proof.
\end{proof}

\section{Computation of the limiting normal cone}
By the definitions we have the representation
\[N_{\Gr\widehat N_\Gamma}(\yb,\yba)=\widehat N_{\Gr\widehat N_\Gamma}(\yb,\yba)\cup\bigcup_{(v,v^\ast)\not=0}N_{\Gr\widehat N_\Gamma}((\yb,\yba);(v,v^\ast)).\]
We split the calculation of the limiting normal cone in directions of the form $(0,v^\ast)$ into two parts:
\[N_{\Gr\widehat N_\Gamma}((\yb,\yba);(0,v^\ast))=N_{\Gr\widehat N_\Gamma}^1((\yb,\yba);(0,v^\ast))\cup N_{\Gr\widehat N_\Gamma}^2((\yb,\yba);(0,v^\ast)),\]
where
\begin{enumerate}
\item  $N_{\Gr\widehat N_\Gamma}^1((\yb,\yba);(0,v^\ast))$ is the collection of all $(w^\ast,w)$ such that there are sequences $(t_k)\downarrow 0$, $(v_k,v_k^\ast)\to(0,v^\ast)$ and $(w_k^\ast,w_k)\to (w^\ast,w)$ with $v_k\not=0$ and $(w_k^\ast,w_k)\in\widehat N_{\Gr\widehat N_\Gamma}(\yb+t_kv_k,\yba+t_kv_k^\ast)$, and
\item $N_{\Gr\widehat N_\Gamma}^2((\yb,\yba);(0,v^\ast))$ is the collection of all $(w^\ast,w)$ such that there are sequences $(t_k)\downarrow 0$, $(v_k^\ast)\to v^\ast$ and $(w_k^\ast,w_k)\to (w^\ast,w)$ with $(w_k^\ast,w_k)\in\widehat N_{\Gr\widehat N_\Gamma}(\yb,\yba+t_kv_k^\ast)$.
\end{enumerate}

In what follows we use the following notation:
\[\Mb(v,v^\ast):=\{(\lambda,\mu)\in\Lb(v)\times T_{ N_{\R^l_-}(q(\yb))}(\lambda)\mv v^\ast= \nabla^2(\lambda^Tq)(\yb)v+\nabla q(\yb)^T\mu\},\]
\[\KbI(v):=\left\{w\in \KbI\mv \exists z\in\R^m: \nabla q_i(\yb)z+v^T\nabla^2q_i(\yb)w\begin{cases}=0&\mbox{if $i\in I^+$,}\\
\leq 0&\mbox{if $i\in \I\setminus I^+$}\end{cases}\right\},\]
\[Q(v,\lambda,I^+,\I):=\{(w^\ast,w)\mv w\in \KbI(v),\  w^\ast+\nabla^2(\lambda^Tq)(\yb)w\in (\KbI(v))^\circ\},\]
where $I^+\subset\I$ are arbitrary subsets of $\Ib$. Further, for every $(\lambda,\mu)\in\Mb(v,v^\ast)$, we set
\[I^+(\lambda,\mu)=I^+(\lambda)\cup \{i\mv \lambda_i=0, \mu_i>0\}.\]

\begin{lemma}\label{LemDualCone}One has
\[(\KbI(v))^\circ=\{\nabla q(\yb)^T\mu+\nabla^2(\nu^Tq)(\yb)v\mv \mu,\nu\in \PI,\ \nabla q(\yb)^T\nu=0\}.\]
\end{lemma}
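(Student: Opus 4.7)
The plan is to compute $(\KbI(v))^\circ$ by representing $\KbI(v)$ as the projection of an explicit polyhedral cone and then applying Farkas' lemma. Set
\[P:=\left\{(w,z)\in\R^m\times\R^m\,\Big|\, \nabla q_i(\yb)w\AT{=}{\leq}0,\ \nabla q_i(\yb)z+v^T\nabla^2 q_i(\yb)w\AT{=}{\leq}0,\ \AT{i\in I^+}{i\in \I\setminus I^+}\right\}.\]
By the defining condition of $\KbI(v)$, this cone satisfies $\KbI(v)=\pi_w(P)$, where $\pi_w$ denotes the projection onto the first coordinate. Directly from the definition of the polar, since for $w\in\KbI(v)$ the pairing $\langle w^\ast,w\rangle$ does not see $z$, one obtains the identity
\[(\KbI(v))^\circ=\{w^\ast\in\R^m\mv (w^\ast,0)\in P^\circ\}.\]
This step carries no technicality since $P$ is polyhedral, hence closed, and the formula follows by swapping the existential quantifier on $z$ with the inequality $\langle w^\ast,w\rangle\le 0$.

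Next, since $P$ is a polyhedral cone described by finitely many linear (in)equalities, Farkas' lemma produces $P^\circ$ as the set of non-negative (free for equalities) combinations of the gradients of its defining constraints. The gradient of $w\mapsto \nabla q_i(\yb)w$ with respect to $(w,z)$ is $(\nabla q_i(\yb)^T,0)$; the gradient of $(w,z)\mapsto \nabla q_i(\yb)z+v^T\nabla^2 q_i(\yb)w$ is $(\nabla^2 q_i(\yb)v,\nabla q_i(\yb)^T)$, using the symmetry of $\nabla^2 q_i(\yb)$ (valid since $q_i\in C^2$). Introducing multipliers $\mu_i$ for the first family and $\nu_i$ for the second, with the sign convention $\mu_i,\nu_i$ free for $i\in I^+$ and $\mu_i,\nu_i\geq 0$ for $i\in \I\setminus I^+$ (and equal to zero otherwise), one reads off
\[P^\circ=\left\{\bigl(\nabla q(\yb)^T\mu+\nabla^2(\nu^Tq)(\yb)v,\ \nabla q(\yb)^T\nu\bigr)\,\Big|\,\mu,\nu\in\PI\right\}.\]

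Intersecting with the slice $\{(w^\ast,0)\}$ forces $\nabla q(\yb)^T\nu=0$ and leaves the first component as $\nabla q(\yb)^T\mu+\nabla^2(\nu^Tq)(\yb)v$, which is exactly the claimed representation. The only non-routine point is the bookkeeping of signs of the multipliers corresponding to active and biactive indices when assembling $\mu,\nu$ as elements of $\PI$, but this is built directly into the definition of $\PI$. The inclusion $\supset$ can alternatively be checked by a short direct computation: for $w\in\KbI(v)$ with associated $z$ and $w^\ast=\nabla q(\yb)^T\mu+\nabla^2(\nu^Tq)(\yb)v$ with $\nabla q(\yb)^T\nu=0$, one has
\[\langle w^\ast,w\rangle=\sum_{i\in\I}\mu_i\nabla q_i(\yb)w+\sum_{i\in\I}\nu_i\bigl(\nabla q_i(\yb)z+v^T\nabla^2 q_i(\yb)w\bigr)\leq 0,\]
so that no appeal to Farkas is needed in that direction. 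The main (and essentially only) obstacle is therefore the nontrivial inclusion $\subset$, which is handled cleanly once the projection identity above is in place.
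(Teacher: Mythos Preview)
Your proof is correct and takes a somewhat different route from the paper's. The paper writes $\KbI(v)$ as the intersection
\[\{w\mv v^T\nabla^2 q(\yb)w\in \range \nabla q(\yb)+\PI^\circ\}\cap \KbI,\]
applies the polar-of-intersection rule (invoking polyhedrality of all pieces to drop the closure that normally appears in that rule), and then computes the polar of each intersectand separately via preimage polar calculus. You instead lift $\KbI(v)$ to the projection of a single polyhedral cone $P\subset\R^m\times\R^m$, use the elementary identity $(\pi_w P)^\circ=\{w^\ast\mv(w^\ast,0)\in P^\circ\}$, and apply Farkas' lemma once to $P$. Your approach sidesteps the closure issue entirely (the projection--slice duality needs no topological hypothesis) and invokes a duality result only once, at the price of carrying the auxiliary variable $z$ explicitly. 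Both arguments rest on the same polyhedral structure; yours packages it a bit more economically, while the paper's decomposition makes the two contributions $\nabla q(\yb)^T\PI$ and $(v^T\nabla^2 q(\yb))^T(\ker\nabla q(\yb)^T\cap\PI)$ to the polar visible as separate summands.
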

\begin{proof}
We have
\[\KbI(v)=\left\{w\mv v^T\nabla^2 q(\yb)w\in \range \nabla q(\yb)+\PI^\circ\right\}\cap \KbI\]
and therefore $(\KbI(v))^\circ=\cl\left(\left\{w\mv v^T\nabla^2 q(\yb)w\in \range \nabla q(\yb)+\PI^\circ\right\}^\circ + \KbI^\circ\right)$.
Since $\range \nabla q(\yb)+\PI^\circ$, $\left\{w\mv v^T\nabla^2 q(\yb)w\in \range \nabla q(\yb)+\PI^\circ\right\}$, $\KbI^\circ$ are convex polyhedral cones and hence so are also their polar cones, we obtain
\begin{eqnarray*}
(\KbI(v))^\circ&=& \left\{w\mv v^T\nabla^2 q(\yb)w\in \range \nabla q(\yb)+\PI^\circ\right\}^\circ + \KbI^\circ\\
&=&(v^T\nabla^2 q(\yb))^T(\range \nabla q(\yb)+\PI^\circ)^\circ+\nabla q(\yb)^T\PI\\
&=&(v^T\nabla^2 q(\yb))^T(\ker \nabla q(\yb)^T\cap\PI)+\nabla q(\yb)^T\PI
\end{eqnarray*}
and the claimed result follows.
\end{proof}

\begin{lemma}\label{LemAux1}
Consider convergent sequences $(t_k)\downarrow 0$, $(v_k,v_k^\ast)\to(v,v^\ast)$, $(\lambda^k)\to\tilde\lambda$ and an index set $I^+$ such that $\lambda^k\in\Lambda(y_k,y_k^\ast)$ and $I^+(\lambda^k)=I^+$ for all $k$, where $(y_k,y_k^\ast):=(\yb,\yba)+t_k(v_k,v_k^\ast)$. Then $\tilde \lambda\in\Lb$ and there is some $\tilde\mu$ such that $(\tilde\lambda,\tilde\mu)\in\Mb(v,v^\ast)$ and $I^+(\tilde\lambda,\tilde\mu)\subset I^+$.
\end{lemma}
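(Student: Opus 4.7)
The plan is to extract from the given sequences the properties of $\tilde\lambda$ and to construct a suitable $\tilde\mu$ in three stages. First, I would pass directly to the limit in the relations $\nabla q(y_k)^T\lambda^k=y_k^\ast$, $\lambda^k\in N_{\R_-^l}(q(y_k))$ to obtain $\tilde\lambda\in\Lb$. The hypothesis $I^+(\lambda^k)=I^+$ forces $\lambda^k_i=0$ for $i\not\in I^+$, hence $\tilde\lambda_i=0$ for $i\not\in I^+$, which means $I^+(\tilde\lambda)\subset I^+$. For $i\in I^+$ the positivity $\lambda^k_i>0$ together with feasibility $q(y_k)\leq 0$ yields $q_i(y_k)=0$, so that $\nabla q_i(\yb)v=\lim(q_i(y_k)-q_i(\yb))/t_k=0$, showing $I^+\subset\Ib(v)$.

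Next, I would set $\mu^k:=(\lambda^k-\tilde\lambda)/t_k$. Combining $\nabla q(y_k)^T\lambda^k=\yb^\ast+t_kv_k^\ast$ with the Taylor identity
\[\nabla q(y_k)^T\lambda^k=\nabla q(\yb)^T\lambda^k+t_k\nabla^2((\lambda^k)^Tq)(\yb)v_k+o(t_k)\]
and with $\nabla q(\yb)^T\tilde\lambda=\yb^\ast$ yields
\[\nabla q(\yb)^T\mu^k=v_k^\ast-\nabla^2((\lambda^k)^Tq)(\yb)v_k+o(1)\;\longrightarrow\;v^\ast-\nabla^2(\tilde\lambda^Tq)(\yb)v.\]
The structural relations $\mu^k_i=0$ for $i\not\in I^+$ and $\mu^k_i=\lambda^k_i/t_k\geq 0$ for $i\in I^+\setminus I^+(\tilde\lambda)$ place each $\mu^k$ in the polyhedral cone
\[C:=\{\mu\in\R^l\mv \mu_i=0,\ i\not\in I^+;\ \mu_i\geq 0,\ i\in I^+\setminus I^+(\tilde\lambda)\}.\]
The sequence $\mu^k$ need not be bounded, so no subsequential limit of $\mu^k$ can be taken directly, and this is the main obstacle. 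The resolution I would employ is to bypass any limit of $\mu^k$ altogether: the image $\nabla q(\yb)^TC$ is a polyhedral (hence closed) convex cone containing every $\nabla q(\yb)^T\mu^k$, so it also contains the limit $v^\ast-\nabla^2(\tilde\lambda^Tq)(\yb)v$. Thus there exists $\tilde\mu\in C$ with $\nabla q(\yb)^T\tilde\mu=v^\ast-\nabla^2(\tilde\lambda^Tq)(\yb)v$, and the membership $\tilde\mu\in T_{N_{\R_-^l}(q(\yb))}(\tilde\lambda)$ together with the bound $I^+(\tilde\lambda,\tilde\mu)\subset I^+$ follows directly from the definition of $C$.

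Finally, to verify $\tilde\lambda\in\Lb(v)$, I would fix any $\lambda\in\Lb$ and expand the inequality $(\lambda-\lambda^k)^Tq(y_k)\leq 0$ at $\yb$ to second order. The zeroth-order term vanishes because $\lambda^Tq(\yb)=(\lambda^k)^Tq(\yb)=0$, and using the Taylor identity from the previous stage to simplify the first-order contribution, the whole expansion produces, after division by $t_k^2/2$ and passage to the limit,
\[v^T\nabla^2(\lambda^Tq)(\yb)v+v^T\nabla^2(\tilde\lambda^Tq)(\yb)v\leq 2v_k^{*T}v_k\big|_{\mathrm{lim}}=2v^{*T}v.\]
Setting $\lambda=\tilde\lambda$ gives $v^T\nabla^2(\tilde\lambda^Tq)(\yb)v\leq v^{*T}v$; the reverse inequality follows by pairing $v^\ast=\nabla^2(\tilde\lambda^Tq)(\yb)v+\nabla q(\yb)^T\tilde\mu$ with $v$ and using $\tilde\mu^T\nabla q(\yb)v=0$, which is valid because $\tilde\mu$ is supported in $I^+\subset\Ib(v)$. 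Substituting the resulting equality $v^T\nabla^2(\tilde\lambda^Tq)(\yb)v=v^{*T}v$ back into the displayed inequality then produces $v^T\nabla^2(\lambda^Tq)(\yb)v\leq v^T\nabla^2(\tilde\lambda^Tq)(\yb)v$ for every $\lambda\in\Lb$, which is precisely $\tilde\lambda\in\Lb(v)$.
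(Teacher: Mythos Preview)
Your proof is correct, and in two places it departs from the paper's argument in instructive ways.

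For the construction of $\tilde\mu$, the paper invokes Hoffman's error bound to project each $\lambda^k$ onto the polyhedron $\Delta(\yba):=\{\lambda\in\R^l_+\mid\nabla q(\yb)^T\lambda=\yba,\ \lambda_i=0\ \forall i\notin I^+\}$, obtaining surrogates $\tilde\lambda^k$ for which $(\lambda^k-\tilde\lambda^k)/t_k$ is bounded; it then passes to a subsequential limit $\hat\mu$ and, if $\hat\mu\notin T_{N_{\R^l_-}(q(\yb))}(\tilde\lambda)$, performs an additional correction step. Your route is shorter: since the images $\nabla q(\yb)^T\mu^k$ converge and all lie in the polyhedral (hence closed) cone $\nabla q(\yb)^T C$, a preimage $\tilde\mu\in C$ with the right image exists immediately, and membership in the tangent cone as well as the bound $I^+(\tilde\lambda,\tilde\mu)\subset I^+$ are built into $C$. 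This sidesteps both the Hoffman bound and the fix-up.

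Regarding $\tilde\lambda\in\Lb(v)$: the paper's proof of this lemma actually records only $\tilde\lambda\in\Lb$; the directional optimality $\tilde\lambda\in\Lb(v)$ (respectively $\Lb(\tilde v)$) is argued separately inside the proofs of the theorem and proposition where the lemma is invoked, using a second-order expansion of $(\tilde\lambda-\lambda)^Tq(y_k)$ along the concrete sequences arising there. Your inclusion of a self-contained second-order argument here makes the proof of the lemma complete as stated. The computation you sketch is sound: the zeroth-order term vanishes because $I^+\subset\Ib$ forces $(\lambda^k)^Tq(\yb)=0$, and the first-order contribution rewrites as $-t_k^2(\nabla q(\yb)^T\mu^k)^Tv_k$, which together with the limit $\nabla q(\yb)^T\mu^k\to v^\ast-\nabla^2(\tilde\lambda^Tq)(\yb)v$ yields exactly your displayed inequality; the identity $\tilde\mu^T\nabla q(\yb)v=0$ follows from the containment $I^+\subset\Ib(v)$ you established at the outset.
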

\begin{proof}
Obviously we have $\tilde\lambda\in\Lb$ and $I^+(\tilde\lambda)\subset I^+\subset\Ib$. Now consider for every $u^\ast\in\R^m$ the set \[\Delta(u^\ast):=\{\lambda\in\R^l_+\mv\nabla q(\yb)^T\lambda=u^\ast, \lambda_i=0, i\not\in I^+\}.\]
By Hoffman's error bound there is some constant $\beta$ such that for every $u^\ast$ with $\Delta(u^\ast)\not=\emptyset$ and every $\lambda\in\R^l$ one has
\[\dist{\lambda,\Delta(u^\ast)}\leq\beta \left(\norm{\nabla q(\yb)^T\lambda-u^\ast}+\sum_{i\not\in I^+}\vert\lambda_i\vert +\sum_{i\in I^+}\max\{-\lambda_i,0\}\right).\]
Since $\tilde\lambda\in\Delta(\yba)$, for every $k$ there is some $\tilde\lambda^k\in\Delta(\yba)$ satisfying
\begin{eqnarray*}
\norm{\tilde\lambda^k-\lambda^k}&\leq&\beta\norm{\nabla q(\yb)\lambda^k-\yba}=\beta\norm{(\nabla q(\yb)-\nabla q(y_k))^T\lambda^k+t_kv_k^\ast}\\
&=&\beta (t_k\norm{v_k^\ast-\nabla^2(-{\lambda^k}^Tq)(\yb)v_k}+\oo(t_k)),
\end{eqnarray*}
showing that the sequence $\mu^k:=(\lambda^k-\tilde\lambda^k)/t_k$ is bounded. By passing to a subsequence if necessary we can assume that the sequence $(\mu^k)$ converges to some $\hat\mu$. If $\hat\mu\in T_{N_{R^l_-}(q(\yb))}(\tilde\lambda)$, we can take $\tilde\mu=\hat\mu$. Otherwise the index set $L:=\{i\in I^+\setminus I^+(\tilde\lambda)\mv \hat\mu_i<0\}$ is not empty and we fix some index $\bar k$ such that $\mu^{\bar k}_i<\hat\mu_i/2$ $\forall i\in L$ and set $\tilde\mu:=\hat\mu+2(\tilde\lambda^{\bar k}-\tilde\lambda)/t_{\bar k}$.
Then for all $i$
with $\tilde \lambda_i=0$ we have $\tilde\mu_i\geq \hat\mu_i$ and for
all $i\in L$ we have
\[\tilde\mu_i =\hat\mu_i+2(\tilde \lambda^{\bar k}_i-\tilde \lambda_i)/t_{\bar k}\geq \hat\mu_i+2(\tilde \lambda^{\bar k}_i- \lambda_i^{\bar k})/t_{\bar k}\geq0\]
and therefore $\tilde\mu\in T_{N_{\R^l_-}(q(\yb))}(\tilde\lambda)$. Taking into account that $\tilde\lambda\in \Lb$, $\tilde\lambda^{\bar k}\in\Delta(\yba)\subset\Lb$ and thus $\nabla q(\yb)^T\hat\mu=\nabla q(\yb)^T\tilde\mu$, we obtain
\begin{eqnarray*}\nabla q(\yb)^T\tilde\mu&=&\lim_{k\to\infty}\frac{\nabla q(\yb)^T(\lambda^k-\tilde\lambda^k)}{t_k}=\lim_{k^\to\infty}\frac{y_k^\ast+(\nabla q(\yb)-\nabla q(y_k))^T\lambda^k-\yba}{t_k}\\
&=&v^\ast-\nabla^2(\tilde\lambda^Tq)(\yb)v
\end{eqnarray*}
showing $(\tilde\lambda,\tilde\mu)\in\Mb(v,v^\ast)$. By the construction of  $\tilde \mu$ it is clear that $I^+(\tilde\lambda,\tilde\mu)\subset I^+$ and this finishes the proof.
\end{proof}

On the basis of these auxiliary results we may now state the first of the main results of this paper. Note that for the calculation of the directional limiting normal cone we only have to take into account directions $(v,v^\ast)\in T_{\Gr \widehat N_\Gamma}(\yb,\yba)$ because of $N_{\Gr \widehat N_\Gamma}((\yb,\yba);(v,v^\ast))=\emptyset$ whenever $(v,v^\ast)\not\in T_{\Gr \widehat N_\Gamma}(\yb,\yba)$.

\begin{theorem}\label{ThLimNormalCone1}Let $0\not=(v,v^\ast)\in T_{\Gr \widehat N_\Gamma}(\yb,\yba)$ and
assume that $M_q$ is metrically subregular at $\yb$ and  metrically regular in the vicinity of $\yb$.
\begin{enumerate}
\item If $v\not=0$, assume that 2-LICQ holds at $\yb$ in direction $v$. Then
\begin{equation}\label{EqInclLimNormalCone1}N_{\Gr \widehat N_\Gamma}((\yb,\yba);(v,v^\ast))\subset\bigcup\limits_{\AT{(\lambda,\mu)\in\Mb(v,v^\ast)}{\J\in\Jb(v)}}
\bigcup\limits_{I^+(\lambda,\mu)\subset I^+\subset \I\subset\J} Q(v,\lambda,I^+,\I)\end{equation}
and this inclusion holds with equality if for every maximal index set $\J\in \Jb(v)$
the mapping $y\to (q_i(y))_{i\in \J}$ is 2--regular at $\yb$ in direction $v$.
\item If $v=0$, assume that 2-LICQ holds at $\yb$ in every direction $0\not=u\in \Kb$.  Then
\begin{eqnarray}\nonumber\lefteqn{N_{\Gr \widehat N_\Gamma}^1((\yb,\yba);(0,v^\ast))}\\
\label{EqInclLimNormalCone2}&\subset&
\bigcup\limits_{\AT{\tilde v\in \Kb}{\norm{\tilde v}=1}}
\bigcup\limits_{\AT{(\lambda,\mu)\in\Mb(0,v^\ast):\lambda\in\Lb(\tilde v)}{\J\in\Jb(\tilde v)}}\bigcup\limits_{I^+(\lambda,\mu)\subset I^+\subset \I\subset\J} Q(\tilde v,\lambda,I^+,\I).\end{eqnarray}
Now equality holds if for every direction $0\not=u\in \Kb$ and every maximal index set $\J\in\Jb(u)$
 the mapping $y\to (q_i(y))_{i\in \J}$ is 2--regular at $\yb$ in direction $u$.
\end{enumerate}
\end{theorem}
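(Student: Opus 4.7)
The plan for both parts is to approximate $(\yb,\yba)$ by perturbed points $(y_k, y_k^\ast)$ at which LICQ holds, apply the LICQ-based formula \eqref{eq-209} there, and pass to the limit; the role of 2-LICQ is precisely to force LICQ at the perturbed points, bringing us into the scope of Theorem \ref{ThNormalConeLICQ}. In part 1, I take $(w^\ast, w)\in N_{\Gr\widehat N_\Gamma}((\yb,\yba);(v,v^\ast))$ and extract defining sequences $t_k\downarrow 0$, $(v_k,v_k^\ast)\to(v,v^\ast)$, $(w_k^\ast,w_k)\to(w^\ast,w)$ with $(w_k^\ast,w_k)\in\widehat N_{\Gr\widehat N_\Gamma}(y_k,y_k^\ast)$ and $(y_k,y_k^\ast):=(\yb,\yba)+t_k(v_k,v_k^\ast)$. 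Since $v\neq 0$, for large $k$ the point $y_k$ lies in $\yb+V_{\epsilon,\delta}(v)$, and 2-LICQ together with Proposition \ref{Prop2Reg}(d) (cf.\ Remark \ref{Rem2Reg}) forces $(\nabla q_i(y_k))_{i\in\I(y_k)}$ to have full row rank, so LICQ holds at $y_k$. Apply \eqref{eq-209} at $(y_k,y_k^\ast)$ to get the unique multiplier $\lambda^k\in\Lambda(y_k,y_k^\ast)$ and index sets $I^+(\lambda^k)\subset I_k^+\subset\I_k\subset\I(y_k)$ with $w_k\in\bar K_{I_k^+,\I_k}(y_k)$ and $w_k^\ast+\nabla^2({\lambda^k}^Tq)(y_k)w_k\in\bar K_{I_k^+,\I_k}(y_k)^\circ$. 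Metric regularity of $M_q$ in the vicinity of $\yb$ provides a uniform bound $\norm{\lambda^k}\leq\kappa\norm{y_k^\ast}$, so after further subsequencing I may assume $\lambda^k\to\tilde\lambda$ and all four index sets constant, say $I^+(\lambda^k)=I^+_0$, $I_k^+=I^+$, $\I_k=\I$, $\I(y_k)=\tilde\I$. Lemma \ref{LemAux1} then delivers $\tilde\mu$ with $(\tilde\lambda,\tilde\mu)\in\Mb(v,v^\ast)$ and $I^+(\tilde\lambda,\tilde\mu)\subset I^+_0\subset I^+$, while Lemma \ref{LemEstActConstr} applied to $(q_i)_{i\in\Ib}$ and index set $\tilde\I$ yields $\bar z\in\bar\Xi(v)$ with $\tilde\I\subset\J(\bar z)=:\J\in\Jb(v)$, so $\I\subset\J$.

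The crux is then to pass to the limit in the cone memberships. Naively one gets $w\in\KbI$, but the upgrade to $w\in\KbI(v)$ requires producing the auxiliary vector in the definition of $\KbI(v)$. A second-order Taylor expansion of $\nabla q_i(y_k)w_k=0$ around $\yb$ yields $\nabla q_i(\yb)(w_k-w)/t_k = -v_k^T\nabla^2 q_i(\yb)w_k + o(1)\to -v^T\nabla^2 q_i(\yb)w$ for $i\in I^+$, with an analogous inequality at those $i\in\I\setminus I^+$ for which $\nabla q_i(\yb)w=0$. Combining this with Hoffman's error bound on the polyhedron defining $\KbI(v)$, and using the linear-in-$t_k$ singular-value lower bound on $(\nabla q_i(y_k))_{i\in I^+}$ from 2-regularity (Remark \ref{Rem2Reg}) to certify its non-emptiness, I will extract a bounded $z$ witnessing $w\in\KbI(v)$. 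For the polar inclusion, I write $w_k^\ast+\nabla^2({\lambda^k}^Tq)(y_k)w_k = \nabla q(y_k)^T\nu^k$ with $\nu^k\in\PI$; if the $\nu^k$ are bounded, the limit lies in $\nabla q(\yb)^T\PI = \KbI^\circ\subset\KbI(v)^\circ$, and if unbounded, the normalized limit $\hat\nu\in\ker\nabla q(\yb)^T\cap\PI$ supplies the second term $\nabla^2(\hat\nu^Tq)(\yb)v$ in the representation of $\KbI(v)^\circ$ given by Lemma \ref{LemDualCone}. The equality in \eqref{EqInclLimNormalCone1} (reverse inclusion) is then established by invoking Proposition \ref{Prop2LICQ} with $\hat\J=\J$ and subset $\I$ to construct a curve $\hat y(\tau)\to\yb$ tangent to $v$ with $\I(\hat y(\tau))=\I$ and LICQ along it; selecting $\hat y^\ast(\tau)$ whose unique multiplier is $\lambda+\tau\mu+o(\tau)$, and choosing $(\hat w^\ast(\tau),\hat w(\tau))$ via \eqref{eq-209} so that $(\hat w^\ast(\tau),\hat w(\tau))\to(w^\ast,w)$, realizes $(w^\ast,w)$ in the directional normal cone.

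For part 2 with $v=0$ and the piece $N^1$, I rescale: set $s_k:=t_k\norm{v_k}$ and $\tilde v_k:=v_k/\norm{v_k}$, pass to a subsequence so that $\tilde v_k\to\tilde v$ with $\norm{\tilde v}=1$. The membership $\tilde v\in\Kb$ holds because $\tilde v\in\Tlin(\yb)$ is immediate from $y_k\in\Gamma$, and $\yba^T\tilde v=0$ follows from $\yba^T v_k=\tilde\lambda^T\nabla q(\yb)v_k = O(t_k\norm{v_k}^2)$, where the last estimate is obtained by applying the identity $q_i(\yb+t_kv_k)=0$ for $i\in\I(y_k)\supset I^+(\tilde\lambda)$ to second order. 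Now 2-LICQ in the direction $\tilde v$ is available by hypothesis, and the arguments of the previous two paragraphs carry over with $\tilde v$ in place of $v$. The extra constraint $\tilde\lambda\in\Lb(\tilde v)$ emerges from a second-order optimality comparison: for any competing $\lambda'\in\Lb$ one derives $\tilde v^T\nabla^2((\tilde\lambda-\lambda')^Tq)(\yb)\tilde v\geq 0$, which is precisely optimality of $\tilde\lambda$ in \eqref{EqLPDirMult} at direction $\tilde v$. I expect the main obstacle throughout to be the Hoffman-style construction of $z$ witnessing $w\in\KbI(v)$: the polyhedron $\KbI(v)$ lives at $\yb$ while the approximating $w_k$ only solve equations at $y_k$, and translating the second-order drift between these two points into a first-order system at $\yb$ is where the quantitative singular-value bound from 2-regularity is indispensable.
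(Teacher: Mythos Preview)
Your overall strategy matches the paper's: force LICQ at $y_k$ via 2-LICQ and Proposition \ref{Prop2Reg}(d), apply Theorem \ref{ThNormalConeLICQ} there, bound $\lambda^k$ by metric regularity in the vicinity, invoke Lemma \ref{LemAux1} for $(\tilde\lambda,\tilde\mu)\in\Mb(v,v^\ast)$ and Lemma \ref{LemEstActConstr} for $\J\in\Jb(v)$, and build the reverse inclusion from Proposition \ref{Prop2LICQ}; the rescaling $\tilde t_k=t_k\|v_k\|$, $\tilde v_k=v_k/\|v_k\|$ in the $v=0$ case and the second-order comparison giving $\tilde\lambda\in\Lb(\tilde v)$ are also the paper's arguments. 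One small correction: since $(w_k^\ast,w_k)$ lies in the \emph{regular} normal cone at $(y_k,y_k^\ast)$, you should use the critical-cone formula of Theorem \ref{ThNormalConeLICQ}, which already pins $I^+=I^+(\lambda^k)$ and $\I=\I(y_k)$; carrying extra intermediate index sets from \eqref{eq-209} is harmless but superfluous.

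The real gap is the polar-cone step. Your bounded/unbounded dichotomy for $\nu^k$ fails in the unbounded branch: the normalized limit $\hat\nu=\lim\nu^k/\|\nu^k\|$ is only a direction and carries no scale, so it cannot be identified with the $\nu$ in the representation $\nabla q(\yb)^T\mu+\nabla^2(\nu^Tq)(\yb)v$ of Lemma \ref{LemDualCone}. What is actually needed is the bound $\|\nu^k\|\leq\kappa'c/t_k$ from Proposition \ref{Prop2Reg}(d)---the very bound you invoke for the $\KbI(v)$ half but omit here---so that $t_k\nu^k$ is bounded and one may take $\nu:=\lim t_k\nu^k\in\PI\cap\ker\nabla q(\yb)^T$; a Taylor expansion then shows $\nabla q(\yb)^T\nu^k$ converges inside the closed polyhedral cone $\nabla q(\yb)^T\PI$, furnishing $\mu$. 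The paper avoids the case split entirely: it sets up a single Hoffman system $\Delta(s,z^\ast,\tilde z^\ast)$ in variables $(z,\mu,\nu)$, observes that the unbounded triple $(w_k/t_k,\nu^k,t_k\nu^k)$ belongs to $\Delta$ with right-hand sides tending to $(0,\wb^\ast+\nabla^2(\tilde\lambda^Tq)(\yb)\wb,0)$, and extracts via Hoffman a bounded representative whose limit simultaneously certifies $\wb\in\KbIt(v)$ and the polar membership. Your separated treatment is repairable once you scale by $t_k$ rather than normalize, but the $O(1/t_k)$ rate is the point---and similarly, the non-emptiness you need for the $\KbI(v)$ system at $\yb$ is obtained not from a rank statement at $y_k$ but from exhibiting $w_k/t_k$ as a feasible point of the perturbed system and applying Hoffman there.
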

\begin{proof}
In the first part of the proof we show the inclusions \eqref{EqInclLimNormalCone1} and \eqref{EqInclLimNormalCone2}, respectively. Consider $(\wb^\ast,\wb)\in N_{\Gr\widehat N_\Gamma}((\yb,\yba);(v,v^\ast))$ if $v\not=0$, and $(\wb^\ast,\wb)\in N_{\Gr\widehat N_\Gamma}^1((\yb,\yba);(0,v^\ast))$ if $v=0$, respectively. Then there are sequences $(w_k^\ast, w_k)\to (\wb^\ast,\wb)$, $(t_k)\downarrow 0$,
$(v_k,v_k^\ast)\to(v,v^\ast)$ such that $v_k\not=0$ and $(w_k^\ast, w_k)\in\widehat N_{\Gr\widehat N_\Gamma}(y_k,y_k^\ast)$
where $(y_k,y_k^\ast):=(\yb,\yba)+t_k(v_k,v_k^\ast)$. Next we define $\tilde v_k:=v_k/\norm{v_k}$, $\tilde t_k:=t_k\norm{v_k}$,
if $v=0$, and $\tilde v_k:=v_k$, $\tilde t_k:=t_k$, if $v\not=0$.
By eventually passing to some subsequence in case $v=0$, we can assume that $\tilde v_k$ converges to some $\tilde v$ and we will now show that there are
multipliers  $(\tilde \lambda,\tilde\mu)\in\Mb(v,v^\ast)$ with $\tilde\lambda\in\Lb(\tilde v)$ and index sets $\tilde  I^+$,$\tilde \I$, $\J$ with
 $I^+(\tilde\lambda,\tilde\mu)\subset\tilde I^+\subset\tilde \I\subset\J\in\Jb(\tilde v)$ such  that $\wb\in \KbIt(\tilde v)$, $\wb^\ast+\nabla^2(\tilde\lambda^Tq)(\yb)\wb\in (\KbIt(\tilde v))^\circ$.

Since $y_k\not=\yb$, as a
consequence of the assumption that $M_q$ is  metrically regular in the vicinity of $\yb$, with each $y_k^\ast$ there is associated some multiplier
$\lambda^k\in N_{N_{\R^l_-}(q(\yb))}(q(y_k))$ with
$y_k^\ast=\nabla q(y_k)^T\lambda^k$. Due to
\cite[Example 9.44]{RoWe98} we have $\norm{\lambda^k}\leq \kappa
\norm{y_k^\ast}$. Hence the sequence $(\lambda^k)$ is uniformly
bounded. By passing to subsequences if necessary we can assume
that the sequence $(\lambda^k)$ converges to some $\tilde\lambda$
and that there are index sets $\tilde I^+\subset\tilde \I$  such that
$\tilde \I=\I(y_k)$, $\tilde I^+=I^+(\lambda^k)$ $\forall k$.
By virtue of Lemma \ref{LemAux1} we can find some $\tilde\mu$ such that
$(\tilde\lambda,\tilde\mu)\in\Mb(v,v^\ast)$ and $I^+(\tilde\lambda,\tilde\mu)\subset\tilde I^+$.

Taking into account that
\[\nabla q_i(\yb)\tilde v=\lim_{k\to\infty}\frac{q_i(y_k)-q_i(\yb)}{\tilde t_k}\begin{cases}
=0&\mbox{if $i\in\tilde \I$,}\\\leq0&\mbox{if $i\in\Ib\setminus\tilde\I$,}
\end{cases}\]
we obtain $\tilde v\in\Tlin(\yb)$. This, together with $\tilde\lambda\in\Lb$ and $I^+(\tilde\lambda)\subset\tilde I^+\subset \tilde \I$, implies that $\yba^T\tilde v=\tilde\lambda^T\nabla q(\yb)\tilde v=0$ showing $\tilde v\in\Kb$. Further, for each $\lambda\in\Lb$ and every $k$ we have $\lambda^Tq(y_k)\leq 0=\tilde\lambda^Tq(y_k)$ and together with $\lambda^Tq(\yb)=\tilde\lambda^Tq(\yb)=0$ and $\lambda^T\nabla q(\yb)=\tilde\lambda^T\nabla q(\yb)=\yba^T$ we conclude
\begin{eqnarray*}0&\leq& \lim_{k\to\infty}\frac {(\lambda-\tilde\lambda)^Tq(y_k)}{\tilde t_k^2}\\
&=&\lim_{k\to\infty}\frac {(\lambda-\tilde\lambda)^Tq(\yb)+\tilde t_k(\lambda-\tilde\lambda)^T\nabla q(\yb)\tilde v_k+ \frac {\tilde t_k^2}2 \tilde v_k^T\nabla^2((\lambda-\tilde\lambda)^Tq)(\yb)\tilde v_k+\oo(\tilde t_k^2)}{\tilde t_k^2}\\
&=&\frac 12 \tilde v^T\nabla ^2((\lambda-\tilde\lambda)^Tq)(\yb)\tilde v
\end{eqnarray*}
showing  $\tilde\lambda\in\Lb(\tilde v)$.

By Lemma \ref{LemEstActConstr} there is some $\bar z\in \R^m$ with
\[\nabla q_i(\yb)\bar z+\tilde v^T\nabla q_i(\yb)\tilde v\begin{cases}=0&\mbox{if $i\in\tilde \I$}\\\leq0&\mbox{if $i\in \Ib(\tilde v)\setminus\tilde \I$.}\end{cases}\]
By adding some multiple of $\tilde v$ to $\bar z$ we can also assume that $\nabla q_i(\yb)\bar z+\tilde v^T\nabla q_i(\yb)\tilde v\leq 0$ holds for all $i\in\Ib\setminus \Ib(\tilde v)$.
Using the inclusions $I^+(\tilde\lambda)\subset\tilde I^+\subset \tilde \I$ again we obtain
\[0=\sum_{i=1}^l\tilde\lambda_i(\nabla q_i(\yb)\bar z+\tilde v^T\nabla q_i(\yb)\tilde v)=\yba{}^T\bar z+ \tilde v^T\nabla^2(\tilde\lambda^Tq)(\yb)\tilde v\]
showing $\bar z\in \Zb(\tilde v)$. Defining $\J:=\{i\in \Ib(\tilde v)\mv
\nabla q_i(\yb)\bar z+\tilde v^T\nabla q_i(\yb)\tilde v=0\}$, we obtain
$I^+(\tilde\lambda)\subset\tilde I^+\subset\tilde \I\subset \J\in
\Jb(\tilde v)$. By our assumption of 2-LICQ in direction $\tilde v$ the mapping $y\to
(q_i(y))_{i\in\tilde \I}$ is 2--regular in direction $\tilde v$ and
therefore the gradients $\nabla q_i(y_k)$, $i\in\tilde \I$, are
linearly independent by Proposition \ref{Prop2Reg}(d). Hence by Theorem \ref{ThNormalConeLICQ} we have
\[w_k\in K(y_k,y_k^\ast)=\{w\mv  \nabla q_i(y_k)w=0,\ i\in \tilde I^+, \nabla q_i(y_k)w\leq 0, i\in\tilde \I\setminus \tilde I^+\},\]
\[w_k^\ast+ \nabla ^2({\lambda^k}^Tq)(y_k)w_k\in (K(y_k,y_k^\ast))^\circ\]
and it follows  that $\bar w\in \KbIt$. Now consider
for every $s=(s_i)_{i\in \tilde \I}$ and $z^\ast,\tilde
z^\ast\in\R^m$ the set
\begin{eqnarray*}\lefteqn{\Delta(s,z^\ast,\tilde z^\ast)}\\
&:=&\left\{(z,\mu,\nu)\in\R^m\times \PIt\times \PIt \mv \begin{array}{l}\nabla q_i(\yb)z+\tilde v^T\nabla^2 q_i(\yb)\wb\begin{cases}= s_i,& i\in \tilde I^+,\\
\leq s_i,& i\in \tilde \I\setminus \tilde I^+,\end{cases}\\
\nabla q(\yb)^T\mu+\nabla^2(\nu^Tq)(\yb)\tilde v=z^\ast,\\
\nabla q(\yb)^T\nu =\tilde z^\ast
\end{array}\right\}.\end{eqnarray*}
Defining $s^k_i:=(-\nabla q_i(y_k)w_k+ \nabla q_i(\yb)w_k+\tilde t_k
\tilde v^T\nabla^2q_i(\yb)\wb)/\tilde t_k$, $i\in\tilde \I$, we have
$\lim_{k\to\infty}s^k_i=0$ because of $(\nabla q_i(y_k)- \nabla
q_i(\yb)-\tilde t_k \tilde v^T\nabla^2q_i(\yb))/\tilde t_k\to 0$ and $w_k\to\wb$. Since
$(K(y_k,y_k^\ast))^\circ=\{\nabla q(y_k)^T\mu\mv \mu\in \PIt\}$, we can find for each $k$ some vector $\mu^k\in
\PIt$  such that
\[\nabla q(y_k)^T\mu^k=w_k^\ast+\nabla ^2({\lambda^k}^Tq)(y_k)w_k=:z_k^\ast.\]
It follows that the sequence $\nabla q(y_k)^T\mu^k$ is uniformly
bounded by some constant $c$ and by Proposition \ref{Prop2Reg}(d)
we obtain that there is some constant $\kappa'$ such that
$\norm{\mu^k}\leq \kappa'c/\tilde t_k$ $\forall k$. Setting
$r_k^\ast:=(\nabla q(y_k)-\nabla q(\yb) -\tilde t_k\tilde v^T\nabla^2
q(\yb))^T\mu^k$, we have
\[\lim_{k\to\infty} r_k^\ast=\left((\nabla q(y_k)-\nabla q(\yb) -\tilde t_k\tilde v^T\nabla^2 q(\yb))^T/\tilde t_k\right)(\tilde t_k\mu^k)=0,\]
because $(\nabla q(y_k)-\nabla q(\yb) -\tilde t_k\tilde v^T\nabla^2 q(\yb))/\tilde t_k\to 0$ and $\tilde t_k\mu^k$ is bounded.
Defining $\tilde z_k^\ast:=\tilde t_k\nabla g(\yb)^T\mu^k$, we have
\[\lim_{k\to\infty} \tilde z_k^\ast = \lim_{k\to\infty} \tilde z_k^\ast-\tilde t_k z_k^\ast=\lim_{k\to\infty}(\nabla q(\yb)-\nabla q(y_k))^T(t_k\mu^k)=0.\]
Taking into account that $w_k\in K(y_k,y_k^\ast)$  we have
$(w_k/\tilde t_k,\mu^k,\tilde t_k\mu^k)\in\Delta(s^k,z_k^\ast-r_k^\ast,\tilde
z_k^\ast)$ and therefore, by invoking Hoffman's lemma, for every $k$ there is some
$(z_k,\tilde\mu^k,\tilde
\nu^k)\in\Delta(s^k,z_k^\ast-r_k^\ast,\tilde z_k^\ast)$ satisfying
\[\norm{(z_k,\tilde\mu^k,\tilde \nu^k)}=\dist{0,\Delta(s^k,z_k^\ast-r_k^\ast,\tilde z_k^\ast)}\leq\beta(\sum_{i\in\tilde\I}\vert s_i^k-\tilde v^T\nabla^2q_i(\yb)\wb\vert +\norm{z_k^\ast-r_k^\ast}+\norm{\tilde z_k^\ast})\]
with some constant $\beta$ independent of $k$.
Since the sequences $(s^k)$, $(z_k^\ast-r_k^\ast)$ and $(\tilde
z_k^\ast)$ are bounded, so also is the sequence
$(z_k,\tilde\mu^k,\tilde \nu^k)$ and, by passing to a subsequence,
it converges to some $(\hat z,\hat\mu,\hat\nu)$. Since
$\lim_k s^k=0$, $\lim_k z_k^\ast-r_k^\ast=\wb^\ast+\nabla
^2({\tilde\lambda}^Tq)(\yb)\wb$ and $\lim_k \tilde z_k^\ast=0$, we
have $(\hat z,\hat\mu,\hat\nu)\in\Delta(0,\wb^\ast+\nabla
^2({\tilde\lambda}^Tq)(\yb)\wb,0)$ showing the desired inclusions
$\wb\in \KbIt(\tilde v)$ and $\wb^\ast+\nabla
^2({\tilde\lambda}^Tq)(\yb)\wb\in (\KbIt(\tilde v))^\circ$. This completes the first part of the proof.

In the second part of the proof we show equality in the inclusions \eqref{EqInclLimNormalCone1}, \eqref{EqInclLimNormalCone2} under the stated assumptions. In case $v=0$ we choose
any $\tilde v$ from $\Kb$ with $\norm{\tilde v}=1$, otherwise we set $\tilde v:=v$.
Then we consider  multipliers $(\tilde\lambda,\tilde\mu)\in \Mb(v,v^\ast)$ with $\tilde\lambda\in\Lb(\tilde v)$,
index sets  $\tilde I^+$, $\tilde \I$, $\J$ with
$I^+(\tilde\lambda,\tilde\mu)\subset\tilde I^+\subset\tilde \I\subset\J\in\Jb(\tilde v)$ and elements $\wb$, $\wb^\ast$ with $\wb\in \KbIt(\tilde v)$,
$\wb^\ast+\nabla^2(\tilde\lambda^Tq)(\yb)\wb\in (\KbIt(\tilde v))^\circ$. We will show that for every $t>0$ sufficiently
small there are $(y_t,y_t^\ast,w_t,w_t^\ast)$ with $y_t\not=\yb$,
$(w_t^\ast,w_t)\in \widehat N_{\Gr \widehat N_\Gamma}(y_t,y_t^\ast)$ such
that $\lim_{t\downarrow
0}(y_t,y_t^\ast,w_t,w_t^\ast)=(\yb,\yba,\wb,\wb^\ast)$, $\lim_{t\downarrow 0}((y_t,y_t^\ast)-(\yb,\yba))/t=(v,v^\ast)$ and hence the claimed inclusion
$(\wb^\ast,\wb)\in N_{\Gr \widehat N_\Gamma}((\yb,\yba);(v,v^\ast))$ follows. We can assume without
loss of generality that $\J$ is a maximal element in $\Jb(\tilde v)$ with $\tilde \I\subset\J$. Then, by Proposition \ref{Prop2LICQ} there exists some $\bar\tau>0$ and a mapping $\hat y:[0,\bar\tau]\to \Gamma$ such that $\hat y(0)=\yb$, $\I(\hat y(\tau))=\tilde \I$, LICQ is fulfilled at $\hat y(\tau)$ for every $\tau\in (0,\bar\tau)$  and
\[\lim_{\tau\downarrow 0}\frac {\hat y(\tau)-\yb}\tau=\tilde v.\]
We now define
\[\tau(t):=t\frac{t^2+\norm{v}}{t+\norm{v}},\ y_t:=\hat y(\tau(t))\]
and observe that $\lim_{t\downarrow 0}(y_t-\yb)/t=v$.
Next we define the multipliers $\lambda^t$ by
\[\lambda^t_i:=\begin{cases}\tilde\lambda_i+t\tilde\mu_i+t^2,& i\in\tilde I^+,\\0,&i\not\in\tilde I^+\end{cases}\]
and then it follows from $\tilde\mu\in T_{N_{\R^l_-}(q(\yb))}(\tilde\lambda)$ that $\lambda^t\geq 0$ for all $t>0$ sufficiently small.
Defining $y_t^\ast:=\nabla q(y_t)^T\lambda^t$ we obtain
\[\lim_{t\downarrow 0}\frac{y_t^\ast-\yba}t=
\lim_{t\downarrow 0}\frac{(\nabla q(y_t)-\nabla q(\yb))^T\tilde\lambda}t+\nabla q(y_t)^T\tilde\mu=\nabla^2(\tilde\lambda^Tq)(\yb)v+\nabla q(\yb)^T\tilde\mu=v^\ast,\]
and, since $\I(y_t)=\tilde\I$ and $I^+(\lambda^t)=\tilde I^+$, we have
$K(y_t,y_t^\ast)=\{w\mv \nabla q_i(y_t)w=0,\ i\in\tilde I^+, q_i(y_t)w\leq0,\ i\in\tilde\I\setminus\tilde I^+\}$,
and $(K(y_t,y_t^\ast))^\circ=\nabla q(y_t)^TP_{\tilde I^+,\tilde \I}$. Let $z$ be some element associated with $\wb$ by the
definition of  $K_{\tilde I^+,\tilde \I}(\tilde v)$. Then
\begin{eqnarray*}\lefteqn{\nabla q_i(\hat y(\tau))(\wb +\tau z)}\\
&=&(\nabla q_i(\yb)+\tau\tilde v^T\nabla^2q_i(\yb))^T(\wb+\tau z)+ \oo(\tau)\\
&=&\nabla q_i(\yb)\wb+\tau(\nabla q_i(\yb)z+\tilde v^T\nabla^2
q_i(\yb)\wb)+\oo(\tau)
\begin{cases}=\oo(\tau),& i\in\tilde I^+,\\\leq\oo(\tau),&i\in\tilde \I\setminus \tilde I^+,\end{cases}
\end{eqnarray*}
implying $\norm{s(\tau)}=\oo(\tau)$ where $s_i(\tau):=-\nabla q_i(\hat y(\tau))(\wb
+\tau z)$ for $i\in\tilde I^+$ and $s_i(\tau):=-\max\{\nabla q_i(\hat y(\tau))(\wb
+\tau z),0\}$ for $i\in \tilde\I\setminus\tilde I^+$. Using 2--regularity of
$(q_i)_{i\in\tilde \I}$ in direction $\tilde v$, by means of Proposition
\ref{Prop2Reg}(d)
\if{and using the fact that for every matrix $A$ and
every right hand side $b$ we have
\[\min\{\norm{x}\mv Ax=b\}\leq \sup\{\frac 1{\norm{A^T\lambda}}\mv \norm{\lambda}=1\}\norm{b},\]}\fi
we can find for all $\tau>0$ sufficiently small some $e_\tau$ with
$\nabla q_i(\hat y(\tau))e_\tau=s(\tau)$ and
\[\lim_{\tau\downarrow 0}\norm{e_\tau}\leq\lim_{t\downarrow 0}\frac{\kappa'}{\norm{\hat y(\tau)-\yb}}\norm{s(\tau)}=0,\]
implying $w_t:=\wb+\tau(t)z+e_{\tau(t)}\in K(y_t,y_t^\ast)$ and
$\lim_{t\downarrow 0}w_t=\wb$.

Finally we choose $\bar\mu,\bar\nu\in P_{\bar I^+,\bar I}$ such
that $\nabla q(\yb)^T\bar\nu=0$ and $\wb^\ast
+\nabla^2(\tilde\lambda^Tq)(\yb)\wb=\nabla
q(\yb)^T\bar\mu+\nabla^2(\bar\nu^Tq)(\yb)\tilde v$. Taking
$\mu_\tau:=\bar\mu+\bar\nu/\tau$ we have $\mu_\tau\in P_{\tilde I^+,\tilde \I}$ and
\begin{eqnarray*}\lim_{\tau\downarrow 0}\nabla q(\hat y(\tau))^T\mu_\tau&=&\lim_{\tau\downarrow 0}(\nabla q(\yb)+\tau \tilde v^T\nabla ^2 q(\yb)+\oo(\tau))^T\mu_\tau\\
&=&\lim_{\tau\downarrow 0}\tau^{-1}\nabla q(\yb)^T\bar\nu+ \nabla
q(\yb)^T\bar\mu+\nabla^2(\bar\nu^Tq)(\yb)\tilde v \\
&=&\nabla
q(\yb)^T\bar\mu+\nabla^2(\bar\nu^Tq)(\yb)\tilde v.
\end{eqnarray*}
Defining $w_t^\ast=\nabla
q(\hat y(\tau(t)))^T\mu_{\tau(t)}-\nabla^2({\lambda^t}^Tq)(y_t)w_t$ we have
$\lim_{t\downarrow 0}w_t^\ast=\wb^\ast$ and, because of $\nabla q(\hat y(\tau(t)))^T\mu_{\tau(t)} \in (K(y_t,y_t^\ast))^\circ$, one has $(w_t^\ast,w_t)\in
\widehat N_{\Gr \widehat N_\Gamma}(y_t,y_t^\ast)$. This completes the proof.
\end{proof}
To compute a suitable estimate of $N_{\Gr\widehat N_\Gamma}((\yb,\yba);(0,v^\ast))$, we turn now our attention to the cone $N_{\Gr\widehat N_\Gamma}^2((\yb,\yba);(0,v^\ast))$.
\begin{proposition}\label{PropRegLimCone}Let  $v^\ast\not=0$ such that $(0,v^\ast)\in T_{\Gr\widehat N_\Gamma}(\yb,\yba)$ and
assume that $M_q$ is metrically subregular at $\yb$. If $\Kb\not=\{0\}$, then
\begin{eqnarray}\nonumber\lefteqn{N_{\Gr\widehat N_\Gamma}^2((\yb,\yba);(0,v^\ast))}\\
&\subset& \bigcap_{\tilde v\in\Null(\yb)}\left(
\bigcup\limits_{\AT{(\lambda,\mu)\in\Mb(0,v^\ast):\lambda\in\bar{\tilde\Lambda}^\E(\tilde v)}{\J\in\Jb(\tilde v)}}
\bigcup\limits_{I^+(\lambda,\mu)\subset I^+\subset \I\subset \J}Q_0(\tilde v,\lambda,I^+,\I)\right),\label{EqInclLimRegNormalCone}\end{eqnarray}
where
\[Q_0(v,\lambda,I^+,\I):=\{(w^\ast,w)\mv w\in \KbI(v),\  w^\ast+\nabla^2(\lambda^Tq)(\yb)w\in \nabla q(\yb)^TP_{I^+,\I}\}.\]
Further, if $\Kb=\{0\}$, then
\begin{equation}N_{\Gr\widehat N_\Gamma}^2((\yb,\yba);(0,v^\ast))\subset\R^m\times\{0\}.\label{EqInclLimRegNormalConeKritConeEmpty}\end{equation}
\end{proposition}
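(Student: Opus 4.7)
The plan is to pick $(\wb^\ast,\wb)\in N^2_{\Gr\widehat N_\Gamma}((\yb,\yba);(0,v^\ast))$, extract defining sequences $t_k\downarrow 0$, $v_k^\ast\to v^\ast$ and $(w_k^\ast,w_k)\to(\wb^\ast,\wb)$ with $(w_k^\ast,w_k)\in\widehat N_{\Gr\widehat N_\Gamma}(\yb,y_k^\ast)$ for $y_k^\ast:=\yba+t_kv_k^\ast$, and then feed them into the upper estimate \eqref{EqInclNormalCone} of Theorem \ref{ThRegNormalCone}, which is available since $M_q$ is metrically subregular at $\yb$. Passing to the limit after suitable subsequence extractions is the backbone of the argument; the bulk of the work lies in selecting multipliers with controlled index sets and using Hoffman-type error bounds to keep the auxiliary quantities bounded.

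For $\Kb=\{0\}$ a short argument suffices: from \eqref{EqInclNormalCone} with $v=0$ one has $w_k\in{\cal W}(\yb,y_k^\ast;0)\subset K(\yb,y_k^\ast)\subset\Tlin(\yb)\cap(y_k^\ast)^\perp$, and passing to the limit gives $\wb\in\Tlin(\yb)\cap\yba^\perp=\Kb=\{0\}$, which yields \eqref{EqInclLimRegNormalConeKritConeEmpty}.

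For $\Kb\not=\{0\}$ fix $\tilde v\in\Null(\yb)$. Since every $y_k^\ast$ lies in $\widehat N_\Gamma(\yb)=\nabla q(\yb)^TN_{\R^l_-}(q(\yb))$ and $\tilde v$ annihilates the gradients of all active constraints, one has $\tilde v^Ty_k^\ast=0$ for every $k$, hence $\tilde v\in K(\yb,y_k^\ast)$. Applying \eqref{EqInclNormalCone} with $v=\tilde v$ together with the formula for $L$ produces $\lambda^k\in\tilde\Lambda^\E(\yb,y_k^\ast;\tilde v)$ and $\nu^k\in P_{I^+(\yb,y_k^\ast),\Ib}$ with
\[w_k^\ast+\nabla^2((\lambda^k)^Tq)(\yb)\,w_k=\nabla q(\yb)^T\nu^k.\]
The bound \eqref{EqBndExtremePoints}, extended to $\co\E(\yb,\yba)$ by convex combination, makes $(\lambda^k)$ bounded, and a Hoffman estimate applied to the polyhedron of admissible $\nu$ (combined with the discrete choice of index set) lets me replace $\nu^k$ by a representative of norm bounded by a uniform constant times the (bounded) right-hand side, so $(\nu^k)$ is bounded as well. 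After passing to subsequences along which $\lambda^k\to\tilde\lambda$, $\nu^k\to\tilde\nu$ and the index sets $I^+(\lambda^k)$, $I^+(\yb,y_k^\ast)$ stabilise, I obtain candidate sets $I^+\subset\I$ with $\tilde\nu\in P_{I^+,\I}$, and Lemma \ref{LemAux1} produces the companion multiplier $\tilde\mu$ with $(\tilde\lambda,\tilde\mu)\in\Mb(0,v^\ast)$ and $I^+(\tilde\lambda,\tilde\mu)\subset I^+(\lambda^k)\subset I^+$.

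The hardest part will be twofold. First, verifying $\tilde\lambda\in\bar{\tilde\Lambda}^\E(\tilde v)$: limits of extreme points $e^k\in\E(\yb,y_k^\ast)$ inherit a linear independence property that passes to the limit, so $\tilde\lambda\in\co\E(\yb,\yba)$, and combined with an upper-semicontinuity argument for the optimizer set of the linear programme \eqref{EqLPDirMult} this gives $\tilde\lambda\in\Lambda^\E(\yb,\yba;\tilde v)$ when $\tilde v\neq 0$; the convex-hull clause in the definition of $\tilde\Lambda^\E$ for $\tilde v=0$ is accommodated by taking a further convex combination over perturbation directions drawn from $\Kb$. Second, exhibiting some $\J\in\Jb(\tilde v)$ with $\I\subset\J$ and producing the inner variable $z$ witnessing $\wb\in\KbI(\tilde v)$ requires combining the information carried by $(w_k)$ through ${\cal W}(\yb,y_k^\ast;\tilde v)$ with a construction in the spirit of Proposition \ref{Prop2LICQ}: starting from any feasible element of $\bar\Xi(\tilde v)$ and enlarging it until the associated index set is maximal, I pick $\J$ as a maximal element of $\Jb(\tilde v)$ containing $\I$. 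The identity $\wb^\ast+\nabla^2(\tilde\lambda^Tq)(\yb)\wb=\nabla q(\yb)^T\tilde\nu\in\nabla q(\yb)^TP_{I^+,\I}$ together with $\wb\in\KbI(\tilde v)$ then delivers \eqref{EqInclLimRegNormalCone}.
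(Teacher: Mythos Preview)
Your overall plan is sound, and your treatment of the case $\Kb=\{0\}$ is correct---in fact it is more direct than the paper's own argument, which first shows $K(\yb,y_k^\ast)=\{0\}$ for large $k$ rather than simply passing $w_k\in K(\yb,y_k^\ast)=\Tlin(\yb)\cap(y_k^\ast)^\perp$ to the limit.

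For $\Kb\not=\{0\}$, however, two genuine gaps remain.

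\textbf{The choice of the index set $\I$ and its inclusion in some $\J\in\Jb(\tilde v)$.} You take $\nu^k\in P_{I^+(\yb,y_k^\ast),\Ib}$ as any Hoffman-bounded representative of $(K(\yb,y_k^\ast))^\circ$, stabilise index sets, and then propose to ``pick $\J$ as a maximal element of $\Jb(\tilde v)$ containing $\I$''. But nothing in your construction forces such a $\J$ to exist: an index set $\J(z)$ records which of the second-order inequalities $\nabla q_i(\yb)z+\tilde v^T\nabla^2 q_i(\yb)\tilde v\le 0$ are active at a feasible $z\in\bar\Xi(\tilde v)$, and there is no reason an arbitrary stabilised support of $\tilde\nu$ should sit inside any such set. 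The paper resolves this by \emph{not} choosing $\nu^k$ arbitrarily. It minimises $-\tilde v^T\nabla^2(\mu^Tq)(\yb)\tilde v$ over all admissible $\mu\in P_{\tilde I^+,\Ib}$ and picks a solution $\mu^k$ with minimal-cardinality support $J^+(\mu^k)$ outside $\tilde I^+$, setting $\tilde\I:=\tilde I^+\cup J^+(\mu^k)$. The dual of this LP furnishes a $z$ whose complementary slackness with $\mu^k$ gives precisely $\tilde\I\subset\J(z)\in\Jb(\tilde v)$; simultaneously, complementary slackness of $(\tilde\lambda,z)$ for the pair \eqref{EqLPDirMult}/\eqref{EqDPDirMult} yields $\tilde\lambda\in\Lb(\tilde v)$. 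The same LP structure is what later allows the Farkas-type contradiction proving $\wb\in\KbIt(\tilde v)$ (a purported separator $\nu\in\PIt$ with $\nabla q(\yb)^T\nu=0$ would satisfy $\tilde v^T\nabla^2(\nu^Tq)(\yb)\tilde v=0$ by dual feasibility, so $\lambda^k+\alpha\nu\in\Lambda(\yb,y_k^\ast;\tilde v)$, contradicting $w_k\in\mathcal W(\yb,y_k^\ast;\tilde v)$). Your sketch offers no substitute for this mechanism.

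\textbf{The subcase $K(\yb,y_k^\ast)=\{0\}$.} Your extraction of $\lambda^k\in\tilde\Lambda^\E(\yb,y_k^\ast;\tilde v)$ from the formula for $L$ tacitly assumes $K(\yb,y_k^\ast)\not=\{0\}$; otherwise $L=\R^m$ carries no multiplier information. When $\Null(\yb)\not=\{0\}$ you are safe, since any nonzero $\tilde v\in\Null(\yb)$ lies in $K(\yb,y_k^\ast)$. But when $\Null(\yb)=\{0\}$ (still with $\Kb\not=\{0\}$) one may have $K(\yb,y_k^\ast)=\{0\}$ along a subsequence, forcing $w_k=0$ while leaving $\lambda^k$ unconstructed. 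The paper covers this by choosing any $0\not=u\in\Kb$, invoking metric subregularity to bound the LP $\max_{\lambda\in\Lambda(\yb,y_k^\ast)}u^T\nabla^2(\lambda^Tq)(\yb)u$ from above, and taking $\lambda^k$ among its extreme optimal solutions; the remainder of the argument then runs as in the main subcase. Your proposal omits this branch entirely.
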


\begin{proof}
Let $(\wb^\ast,\wb)\in N_{\Gr\widehat N_\Gamma}^2((\yb,\yba);(0,v^\ast))$ and consider sequences $(w_k^\ast, w_k)\to (\wb^\ast,\wb)$, $(t_k)\downarrow 0$,
$(v_k^\ast)\to v^\ast$ such that $(w_k^\ast, w_k)\in\widehat N_{\Gr\widehat N_\Gamma}(\yb,y_k^\ast)$
where $y_k^\ast:=\yba+t_k v_k^\ast$. Consider first the case when $\Kb\not=\{0\}$ and $K(\yb,y_k^\ast)\not =\{0\}$ for infinitely many $k$ and let $\tilde v\in\Null(\yb)$ be fixed.
We will now show that there are
multipliers  $(\tilde \lambda,\tilde\mu)\in\Mb(0,v^\ast)$ with $\tilde\lambda\in\Lb^\E(\tilde v)$ and index sets $\tilde  I^+$,$\tilde \I$, $\J$ with
 $I^+(\tilde\lambda,\tilde\mu)\subset\tilde I^+\subset\tilde \I\subset\J\in\Jb(\tilde v)$ such  that $\wb\in \KbIt(\tilde v)$ and  $\wb^\ast+\nabla^2(\tilde\lambda^Tq)(\yb)\wb\in \nabla q(\yb)^T\PIt$.

By passing to a subsequence we can assume that $K(\yb,y_k^\ast)\not =\{0\}$ holds for all $k$.  By Theorem \ref{ThRegNormalCone}  we have
\[w_k\in {\cal W}(\yb,y_k^\ast;\tilde v):=\{w\in K(\yb,y_k^\ast)\mv w^T\nabla^2((\lambda^1-\lambda^2)^Tq)(\yb)\tilde v=0\ \forall
\lambda^1,\lambda^2\in\Lambda(\yb,y_k^\ast;\tilde v)\}\]
and there is some
\[\lambda^k\in\tilde\Lambda^\E(\yb,y_k^\ast;\tilde v):=\begin{cases}\Lambda^\E(\yb,y_k^\ast;\tilde v)
&\mbox{if $\tilde v\not=0$,}\\
\co(\bigcup\limits_{0\not=u\in K(\yb,y_k^\ast)}\Lambda^\E(\yb,y_k^\ast;u))&\mbox{if $\tilde v=0$,}\end{cases}\]
such that
\[w_k^\ast\in -\nabla^2({\lambda^k}^Tq)(\yb)w_k+(K(\yb,y_k^\ast))^\circ.\]
By \eqref{EqBndExtremePoints} there is some $\kappa>0$  such that $\E(\yb,y_k^\ast)$
 is contained in a ball with radius $\kappa\norm{y_k^\ast}$. Hence the sequence $(\lambda^k)$ is uniformly
bounded. By passing to subsequences if necessary we can assume
that the sequence $(\lambda^k)$ converges to some $\tilde\lambda$
and that there is some index set $\tilde I^+$  such that
$\tilde I^+=I^+(\lambda^k)$ $\forall k$.
By Lemma \ref{LemAux1}  we can find some $\tilde\mu$ such that
$(\tilde\lambda,\tilde\mu)\in\Mb(0,v^\ast)$ and $I^+(\tilde\lambda,\tilde\mu)\subset\tilde I^+$.
Since
\[K(\yb,y_k^\ast)=\left\{v\in \R^m\mv \begin{array}{l}
\nabla q_i(\yb)v=0,\ i\in  I^+(\lambda^k)\\
\nabla q_i(\yb)v\leq 0,\ i\in \Ib\setminus I^+(\lambda^k)
\end{array}\right\},\]
for every $k$, there is some $\mu\in P_{\tilde I^+,\Ib}$ with $w_k^\ast+\nabla^2({\lambda^k}^Tq)(\yb)w_k=\nabla q(\yb)^T\mu$.
Now consider the linear optimization problem
\begin{equation}\label{EqLinProblMu}
\min_\mu -\tilde v^T\nabla^2(\mu^Tq)(\yb)\tilde v\quad\mbox{ subject to} \quad w_k^\ast+\nabla^2({\lambda^k}^Tq)(\yb)w_k=\nabla q(\yb)^T\mu, \mu\in P_{\tilde I^+,\Ib}.
\end{equation}
This problem has some solution, since the feasible region is not empty and the objective is bounded below on the feasible region. Indeed, otherwise there would be
some $\nu\in P_{\tilde I^+,\Ib}$ such that $\nabla q(\yb)^T\nu=0$, $\tilde v^T\nabla^2(\nu^Tq)(\yb)\tilde v>0$ and consequently
$\lambda^k+\alpha\nu\in\Lambda(\yb,y_k^\ast)$ and $\tilde v^T\nabla^2((\lambda^k+\alpha\nu)^Tq)(\yb)\tilde v>\tilde v^T\nabla^2({\lambda^k}^Tq)(\yb)\tilde v$
for $\alpha>0$ sufficiently small contradicting $\lambda^k\in\Lambda^\E(\yb,y_k^\ast;\tilde v)$.
By duality theory
of linear programming, the dual problem
\[\max_z (w_k^\ast+\nabla^2({\lambda^k}^Tq)(\yb)w_k)^Tz\quad\mbox{ subject to }\nabla q_i(\yb)z+\tilde v^T\nabla^2 q_i(\yb)\tilde v
\begin{cases}=0,&i\in\tilde I^+,\\\leq 0,&i\in\Ib\setminus \tilde I^+\end{cases}\]
also has a solution $z_k$ which, together with any solution $\mu$ of \eqref{EqLinProblMu} fulfills  the complementarity condition $\mu_i(\nabla q_i(\yb)z_k+\tilde v^T\nabla^2 q_i(\yb)\tilde v)=0$, $i\in\Ib$.
We now select $\mu^k$ among the solutions of the problem \eqref{EqLinProblMu} such that
the cardinality of the index set $J^+(\mu^k):=\{i\in \Ib\setminus \tilde I^+:\mu^k_i>0\}$ is minimal.
Then
\[\nabla q(\yb)^T\nu=0, \nu\in P_{\tilde I^+, \tilde I^+\cup J^+(\mu_k)} \ \Rightarrow\ \nu_i=0,\ i\in J^+(\mu_k),\]
because otherwise we can find some scalar $\alpha$ such that $\mu^k+\alpha\nu\in P_{\tilde I^+, \tilde I^+\cup J^+(\mu_k)}\subset P_{\tilde I^+,\Ib}$
is feasible for \eqref{EqLinProblMu} and
$J^+(\mu_k+\alpha\nu)\subset  J^+(\mu_k)$. This  shows that $\mu^k+\alpha\nu$ is a solution of \eqref{EqLinProblMu}
because the complementarity condition remains fulfilled,  and $\vert J^+(\mu_k+\alpha\nu)\vert < \vert J^+(\mu_k)\vert$,
contradicting the minimality of $\vert J^+(\mu^k)\vert$.

By eventually passing to a subsequence once more, we can assume that
$J^+(\mu_k)=J^+$ holds for all $k$ and we set $\tilde \I:=\tilde I^+\cup J^+$. Fixing $z=z_1$, we obtain from the complementarity condition that
\begin{equation}\label{EqInequ1}\nabla q_i(\yb)z+\tilde v^T\nabla^2 q_i(\yb)\tilde v\begin{cases}=0&\mbox{if $i\in\tilde \I$}\\\leq 0&\mbox{if $i\in\Ib\setminus \tilde \I$,}\end{cases}\end{equation}
and therefore
\[\tilde \lambda_i(\nabla q_i(\yb)z+\tilde v^T\nabla^2 q_i(\yb)\tilde v)=
\lim_{k\to\infty}\lambda^k_i(\nabla q_i(\yb)z+\tilde v^T\nabla^2 q_i(\yb)\tilde v)=0.\]
Hence the pair $(\tilde\lambda, z)$ is feasible for \eqref{EqLPDirMult} and its dual \eqref{EqDPDirMult} at $(\yb,\yba)$ and fulfills the complementarity condition, implying by duality theory of linear programming that $\tilde\lambda\in\Lb(\tilde v)$ and $z\in \Zb(\tilde v)$.
Since $\tilde v\in\Null(\yb)$, we have  $\Ib(\tilde v)=\Ib$ and  we put $\J:=\{i\in\Ib\mv \nabla q_i(\yb)z+\tilde v^T\nabla^2 q_i(\yb)\tilde v)=0\}$.

In a next step we show that $\tilde\lambda\in\bar{\tilde\Lambda}^\E(\tilde v)$.
The multiplier $\lambda^k$ is the convex combination of finitely many extreme points $\hat\lambda^{k,j}\in \Lambda(\yb,y_k^\ast;u^k_j)\cap\E(\yb,y_k^\ast)$, $j=1,\ldots,p_k$, where $0\not=u^k_j\in K(\yb;y_k^\ast)$, where $u^k_j=\tilde v$ $\forall k,j$ if $\tilde v\not=0$,  and, since $\Lambda(y,y^\ast;\alpha u)=\Lambda(y,y^\ast;u)$ $\forall\alpha>0$, we can assume that $\norm{u^k_j}=1$ in case $\tilde v=0$. By passing to subsequences we can also assume that $p_k=\bar p$ $\forall k$ and $u^k_j\to\bar u_j$, $j=1,\ldots,\bar p$, as $k\to\infty$ and $I^+(\hat\lambda^{k,j})=I^+_j$, $j=1,\ldots,\bar p$, holds for all $k$. It follows that for each $j$ the sequence $\hat \lambda^{k,j}$ converges to some $\hat\lambda^j\in\Lb$ with $I^+(\hat\lambda^j)\subset I^+_j$ and thus $\hat\lambda^j$ is an extreme point of $\Lb$. Hence $\tilde \lambda$ is a convex combination of these $\hat\lambda^j$, $j=1,\ldots,\bar p$,  $\bar u_j\in \Kb$ and since $\hat\lambda^j\in\Lb(\bar u_j)$, $j=1,\ldots,\bar p$, because of \cite[Theorem 5.4.2(2)]{BaGuKlKuTa82}, we obtain $\hat\lambda^j\in\Lb^\E(\bar u_j)$ and thus $\tilde\lambda\in\bar{\tilde\Lambda}^\E(0)$. In case that $\tilde v\not=0$ we have $u^k_j=\tilde v$ $\forall j,k$  and $\tilde\lambda\in\bar{\tilde\Lambda}^\E(\tilde v)$ follows.

It remains to show that $\bar w\in\KbIt(\tilde v)$ and $\wb^\ast+\nabla^2(\tilde\lambda^Tq)(\yb)\wb\in \nabla q(\yb)^T\PIt$. Let us first prove  by contradiction that $\bar w\in\KbIt(\tilde v)$. Assuming that $\bar w\not\in\KbIt(\tilde v)$, by the Farkas Lemma there is some $\nu\in\PIt$ with $\nabla q(\yb)^T\nu=0$ and $\tilde v^T\nabla^2(\nu^Tq)(\yb)\bar w>0$, yielding $\tilde v^T\nabla^2(\nu^Tq)(\yb)w_k>0$ for all $k$ sufficiently large. From \eqref{EqInequ1} we deduce $\tilde v^T\nabla^2(\nu^Tq)(\yb)\tilde v=0$. Hence, for every $k$ sufficiently large there is $\alpha_k>0$ such that $\lambda^k+\alpha_k\nu\in\Lambda(\yb,y_k^\ast;\tilde v)$ and $w_k^T\nabla^2(((\lambda^k+\alpha_k\nu)-\lambda^k)^Tq)(\yb)\tilde v>0$, contradicting $w_k\in{\cal W}(\yb,y_k^\ast;\tilde v)$. Hence, the desired inclusion
$\bar w\in\KbIt(\tilde v)$ holds true. Finally note that, by the way we constructed the index set $\tilde \I$, for every $k$ there is some $\mu^k\in\PIt$ satisfying $w_k^\ast+\nabla^2({\lambda^k}^Tq)(\yb)w_k=\nabla q(\yb)^T\mu^k$. Utilizing Hoffman's Error Bound there is some constant $\beta$ such that for every $k$ there is also an element $\tilde\mu^k\in\PIt$ such that $w_k^\ast+\nabla^2({\lambda^k}^Tq)(\yb)w_k=\nabla q(\yb)^T\tilde \mu^k$ and $\norm{\tilde\mu^k}\leq\beta\norm{w_k^\ast+\nabla^2({\lambda^k}^Tq)(\yb)w_k}$. Thus the sequence $(\tilde\mu^k)$ is bounded and we can assume that it converges to some $\tilde\mu\in \PIt$ satisfying $\wb^\ast+\nabla^2(\tilde\lambda^Tq)(\yb)\wb=\nabla q(\yb)^T\tilde\mu$. This completes the proof of the case when $K(\yb,y_k^\ast)\not=\{0\}$ for all $k$.

In a next step we consider the case that $\Kb\not=\{0\}$ and $K(\yb,y_k^\ast)\not=\{0\}$ only holds for finitely many $k$. Without loss of generality we can assume that we have $K(\yb,y_k^\ast)=\{0\}$ and consequently $w_k=0$ $\forall k$. We observe that we always have $\Null(\yb)\subset K(\yb,y_k^\ast)$ and thus $\Null(\yb)=\{0\}$ and we will proceed as in the first part of the proof with the only difference in the choice of the sequence $(\lambda^k)$. Pick an arbitrary $0\not=u\in \Kb$. Then, since $M_q$ is metrically subregular at $(\yb,0)$, by \cite[Theorem 6.1(2b)]{Gfr11} for every $\lambda\in N_{R^l_-}(q(\yb)$ with $\nabla q(\yb)^T\lambda=0$ we have $u^T\nabla^2 (\lambda^Tq)(\yb)u\leq 0$. We obtain that the linear program
\begin{equation}\label{EqLinProblAux}
\max u^T\nabla^2 (\lambda^Tq)(\yb)u\mbox{ subject to }\lambda\in\Lambda(\yb,y_k^\ast)
\end{equation}
has a solution and we select $\lambda^k\in\Lambda(\yb,y_k^\ast;u)\cap\E(\yb,y_k^\ast)$. This can be done since among the solutions of a linear optimization problem there is always an extreme point, provided the feasible region has at least one extreme point. Then the same arguments as before yield the assertion.

Finally, let us consider the case $\Kb=\{0\}$. Given an arbitrary element $(w^\ast,w)\in N_{\Gr\widehat N_\Gamma}^2((\yb,\yba);(0,v^\ast))$, we consider  sequences $(t_k)\downarrow 0$, $v_k^\ast\to v^\ast$ and $(w_k^\ast,w_k)\to (w^\ast,w)$  such that $(w_k^\ast,w_k)\in\widehat N_{\Gr\widehat N_\Gamma}(\yb,y_k^\ast)$, where $y_k^\ast:=y^\ast+t_k v_k^\ast$. We will now show by contraposition that  $K(\yb,y^\ast_k)=\{0\}$ holds for all $k$ sufficiently large. Assume on the contrary that for every $k$ there is some $z_k\in K(\yb,y_k^\ast)$ with $\norm{z_k}=1$. Then, by passing to a subsequence we can assume that $(z_k)$ converges to some $z$. Because $z_k\in \Tlin(\yb)$ and $\Tlin(\yb)$ is closed, we have $z\in \Tlin(\yb)$ and, since $\yba^Tz=\lim {y_k^\ast}^Tz_k=0$, it follows that $0\not=z\in \Kb$, a contradiction.  Hence, $K(\yb,y^\ast_k)=\{0\}$ and from \eqref{EqInclNormalCone} we conclude $\widehat N_{\Gr\widehat N_\Gamma}(\yb,y_k^\ast)\subset \R^m\times\{0\}$. It follows that $w_k=0$ implying $w=0$ and this completes the proof.
\end{proof}

We do not give a characterization when  equality holds in \eqref{EqInclLimRegNormalCone} and \eqref{EqInclLimRegNormalConeKritConeEmpty}, respectively, because in many cases we have $N_{\Gr\widehat N_\Gamma}^2((\yb,\yba);(0,v^\ast))\subset N_{\Gr\widehat N_\Gamma}^1((\yb,\yba);(0,v^\ast))$ and for the latter set an exact description is known. This issue is clarified in the next statement.
\begin{proposition}\label{PropRegContLim}Assume that $M_q$ is metrically subregular at $(\yb,0)$  and  metrically regular in the vicinity of $\yb$. Further assume that for every direction $0\not=u\in \Kb$ and every maximal index set $\J\in\Jb(u)$  the mapping $y\to (q_i(y))_{i\in
\J}$ is 2--regular at $\yb$ in direction $u$ and assume that $\Null(\yb)\not=\{0\}$. Then for every $v^\ast\not=0$ one has
\[N_{\Gr\widehat N_\Gamma}^2((\yb,\yba);(0,v^\ast))\subset N_{\Gr\widehat N_\Gamma}^1((\yb,\yba);(0,v^\ast)).\]
\end{proposition}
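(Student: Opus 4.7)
The plan is to reduce the statement to a direct comparison of the two upper estimates already available: the one for $N_{\Gr\widehat N_\Gamma}^2((\yb,\yba);(0,v^\ast))$ from Proposition \ref{PropRegLimCone} and the \emph{exact} characterization of $N_{\Gr\widehat N_\Gamma}^1((\yb,\yba);(0,v^\ast))$ that follows from the equality case of Theorem \ref{ThLimNormalCone1}(2). The standing hypothesis of 2--regularity along every direction $0\ne u\in\Kb$ for every maximal $\J\in\Jb(u)$ together with the metric regularity of $M_q$ in the vicinity of $\yb$ are precisely what is needed to invoke that equality version of \eqref{EqInclLimNormalCone2}. Thus nothing new has to be constructed from sequences; it suffices to transport an arbitrary element of $N^2$ into the right-hand side of \eqref{EqInclLimNormalCone2}.

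First I would fix an arbitrary element $(\wb^\ast,\wb)\in N_{\Gr\widehat N_\Gamma}^2((\yb,\yba);(0,v^\ast))$ and exploit the fact that $\Null(\yb)\ne\{0\}$ to pick some $\tilde v\in\Null(\yb)$ with $\norm{\tilde v}=1$. The point of this choice is twofold. On the one hand $\tilde v\in\Null(\yb)$ is admissible in the intersection of Proposition \ref{PropRegLimCone}, so the $N^2$-estimate produces multipliers $(\lambda,\mu)\in\Mb(0,v^\ast)$ with $\lambda\in\bar{\tilde\Lambda}^\E(\tilde v)$, an index set $\J\in\Jb(\tilde v)$ and $I^+(\lambda,\mu)\subset I^+\subset\I\subset\J$ such that $(\wb^\ast,\wb)\in Q_0(\tilde v,\lambda,I^+,\I)$. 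On the other hand, because $\nabla q_i(\yb)\tilde v=0$ for all $i\in\Ib$ one has $\tilde v\in\Tlin(\yb)=T_\Gamma(\yb)$ (using metric subregularity of $M_q$ to identify the two cones), and since $\yba=\nabla q(\yb)^T\lb$ for any $\lb\in\Lb$, one obtains $\yba^T\tilde v=0$, so $\tilde v\in\Kb$ with $\norm{\tilde v}=1$. Hence $\tilde v$ is also admissible in the outer union in \eqref{EqInclLimNormalCone2}.

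Next I would verify the inclusions $\bar{\tilde\Lambda}^\E(\tilde v)\subset\Lb(\tilde v)$ and $Q_0(\tilde v,\lambda,I^+,\I)\subset Q(\tilde v,\lambda,I^+,\I)$. The former is immediate from the definition of $\tilde\Lambda^\E$ in the case $\tilde v\ne 0$, which reads $\bar{\tilde\Lambda}^\E(\tilde v)=\bar\Lambda^\E(\tilde v)=\Lb(\tilde v)\cap\co\E(\yb,\yba)$. The latter follows from Lemma \ref{LemDualCone}: since $\nabla q(\yb)^T P_{I^+,\I}\subset (\KbI(\tilde v))^\circ$, the $Q_0$--condition $\wb^\ast+\nabla^2(\lambda^T q)(\yb)\wb\in\nabla q(\yb)^T P_{I^+,\I}$ implies the weaker $Q$--condition $\wb^\ast+\nabla^2(\lambda^T q)(\yb)\wb\in(\KbI(\tilde v))^\circ$, whereas the primal condition $\wb\in\KbI(\tilde v)$ is the same in both. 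Combining these two observations, the data $(\tilde v,\lambda,\mu,\J,I^+,\I)$ witness membership of $(\wb^\ast,\wb)$ in the right-hand side of \eqref{EqInclLimNormalCone2}, which by the equality part of Theorem \ref{ThLimNormalCone1}(2) is exactly $N_{\Gr\widehat N_\Gamma}^1((\yb,\yba);(0,v^\ast))$.

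The main conceptual hurdle is simply to recognise that no sequential argument is required: once the bookkeeping of the two characterizations is laid side by side, the only real work is to justify $\tilde v\in\Kb$ for some unit direction in $\Null(\yb)$, which is where the hypothesis $\Null(\yb)\ne\{0\}$ is genuinely used, and to check that $Q_0$ sits inside $Q$ through Lemma \ref{LemDualCone}. Everything else is inclusion chasing among polar/dual cones and among the various multiplier sets.
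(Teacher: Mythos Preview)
Your proposal is correct and follows essentially the same route as the paper's proof: invoke Lemma \ref{LemDualCone} to obtain $Q_0\subset Q$, and then observe that under $\Null(\yb)\neq\{0\}$ the right-hand side of \eqref{EqInclLimRegNormalCone} is contained in the right-hand side of \eqref{EqInclLimNormalCone2}, which equals $N^1$ by the equality case of Theorem \ref{ThLimNormalCone1}(2). You have simply fleshed out the step the paper calls ``easy to see'' by explicitly picking a unit $\tilde v\in\Null(\yb)$, verifying $\tilde v\in\Kb$ (hence also $\Kb\neq\{0\}$, so the relevant branch of Proposition \ref{PropRegLimCone} applies), and noting $\bar{\tilde\Lambda}^\E(\tilde v)\subset\Lb(\tilde v)$.
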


\begin{proof}By Lemma \ref{LemDualCone} one has that $\nabla q(\yb)^T\PI\subset(\KbI(v))^\circ$ and, consequently, $Q_0(v,\lambda,I^+,\I)\subset Q(v,\lambda,I^+,\I)$. Now it is easy to see that in case $\Null(\yb)\not=\{0\}$ the set on the right hand side of the inclusion \eqref{EqInclLimRegNormalCone} is a subset of the set on the right hand side of \eqref{EqInclLimNormalCone2}. Hence the inclusion $N_{\Gr\widehat N_\Gamma}^2((\yb,\yba);(0,v^\ast))\subset N_{\Gr\widehat N_\Gamma}^1((\yb,\yba);(0,v^\ast))$ follows from Theorem \ref{ThLimNormalCone1} and Proposition \ref{PropRegLimCone}.
\end{proof}

We summarize these results in the following theorem to give a complete description of the limiting normal cone:

\begin{theorem}\label{ThEqLimNormalCone}
Assume that $M_q$ is metrically subregular at $(\yb,0)$  and  metrically regular in the vicinity of $\yb$. Further assume that for every direction $0\not=u\in \Kb$ and every maximal index set $\J\in\Jb(u)$ the mapping $y\to (q_i(y))_{i\in
\J}$ is 2--regular at $\yb$ in direction $u$ and assume that $\Null(\yb)\not=\{0\}$. Then
\begin{eqnarray*}
\lefteqn{N_{\Gr \widehat N_\Gamma}(\yb,\yba)}\\
&=&\widehat N_{\Gr \widehat N_\Gamma}(\yb,\yba)\cup\bigcup_{\AT{(v,v^\ast)\in T_{\Gr N_\Gamma}(\yb,\yba)}{v\not=0}}\bigcup\limits_{\AT{(\lambda,\mu)\in\Mb(v,v^\ast)}{\J\in\Jb(v)}}
\bigcup\limits_{I^+(\lambda,\mu)\subset I^+\subset \I\subset\J} Q(v,\lambda,I^+,\I).
\end{eqnarray*}
\end{theorem}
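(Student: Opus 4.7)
My plan is to combine the directional decomposition
\[
N_{\Gr\widehat N_\Gamma}(\yb,\yba) = \widehat N_{\Gr\widehat N_\Gamma}(\yb,\yba) \cup \bigcup_{(v,v^\ast)\neq 0} N_{\Gr\widehat N_\Gamma}((\yb,\yba);(v,v^\ast))
\]
recalled at the beginning of Section 4 with the two parts of Theorem \ref{ThLimNormalCone1} and the reduction Proposition \ref{PropRegContLim}, and then to absorb the contribution from the purely dual directions $(0,v^\ast)$ into the union indexed by $v\neq 0$.

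First, split the nonzero-direction union into (a) $v\neq 0$ and (b) $v=0$, $v^\ast\neq 0$. In case (a), membership of $(v,v^\ast)$ in $T_{\Gr\widehat N_\Gamma}(\yb,\yba)$ combined with equality in \eqref{EqInclTangCone} (valid here by the metric regularity hypotheses) forces $v\in\Kb$; the blanket 2-regularity assumption then yields 2-LICQ at $\yb$ in direction $v$ via Proposition \ref{PropSuffCond2LICQ} as well as the maximal-index-set condition needed for equality in Theorem \ref{ThLimNormalCone1}(1). Applying that theorem gives
\[
N_{\Gr\widehat N_\Gamma}((\yb,\yba);(v,v^\ast)) = \bigcup_{\AT{(\lambda,\mu)\in\Mb(v,v^\ast)}{\J\in\Jb(v)}} \bigcup_{I^+(\lambda,\mu)\subset I^+\subset\I\subset\J} Q(v,\lambda,I^+,\I),
\]
which is precisely the $v$-slice of the set on the right-hand side of the claim. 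For case (b), the identity $N_{\Gr\widehat N_\Gamma}((\yb,\yba);(0,v^\ast)) = N^1 \cup N^2$ together with Proposition \ref{PropRegContLim} (applicable since $\Null(\yb)\neq\{0\}$) reduces the task to controlling $N^1_{\Gr\widehat N_\Gamma}((\yb,\yba);(0,v^\ast))$.

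The crucial step will be to show that every element of $N^1_{\Gr\widehat N_\Gamma}((\yb,\yba);(0,v^\ast))$ already appears in case (a). By the equality form of Theorem \ref{ThLimNormalCone1}(2), such an element $(\wb^\ast,\wb)$ belongs to some $Q(\tilde v,\lambda,I^+,\I)$ with $\tilde v\in\Kb$, $\|\tilde v\|=1$, $(\lambda,\mu)\in\Mb(0,v^\ast)$, $\lambda\in\Lb(\tilde v)$, $\J\in\Jb(\tilde v)$, and $I^+(\lambda,\mu)\subset I^+\subset\I\subset\J$. Setting
\[
\tilde v^\ast := \nabla^2(\lambda^Tq)(\yb)\tilde v + \nabla q(\yb)^T\mu,
\]
one has $(\lambda,\mu)\in\Mb(\tilde v,\tilde v^\ast)$ by construction. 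The constructive ``$\supseteq$'' half of Theorem \ref{ThLimNormalCone1}(1), whose proof explicitly builds sequences $(y_t,y_t^\ast)\to(\yb,\yba)$ of slope $(\tilde v,\tilde v^\ast)$ realizing $(\wb^\ast,\wb)$ as a limit of regular normals of $\Gr\widehat N_\Gamma$, then places $(\wb^\ast,\wb)$ into $N_{\Gr\widehat N_\Gamma}((\yb,\yba);(\tilde v,\tilde v^\ast))$; in particular $(\tilde v,\tilde v^\ast)\in T_{\Gr\widehat N_\Gamma}(\yb,\yba)$, and since $\tilde v\neq 0$ it falls under case (a). The opposite inclusion ``$\supseteq$'' in the theorem is immediate from the same ``$\supseteq$'' half of Theorem \ref{ThLimNormalCone1}(1) applied direction by direction, together with $\widehat N_{\Gr\widehat N_\Gamma}(\yb,\yba)\subset N_{\Gr\widehat N_\Gamma}(\yb,\yba)$.

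The main obstacle is precisely this absorption step: one must exploit that the geometric object $Q(\tilde v,\lambda,I^+,\I)$ depends only on the primal direction $\tilde v$ and the multiplier data $(\lambda,I^+,\I)$ and not on the dual direction, so that the ``dual-only'' direction $(0,v^\ast)$ arising in $N^1$ can be traded for a genuinely nonzero primal direction $(\tilde v,\tilde v^\ast)$ while the same $Q$-set is realized as a directional limit.
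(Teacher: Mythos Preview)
Your proposal is correct and follows essentially the same route as the paper's own proof: both combine the directional decomposition with Theorem~\ref{ThLimNormalCone1} and Proposition~\ref{PropRegContLim}, and the key absorption step---replacing $(0,v^\ast)$ by $(\tilde v,\tilde v^\ast)$ with $\tilde v^\ast=\nabla^2(\lambda^Tq)(\yb)\tilde v+\nabla q(\yb)^T\mu=v^\ast+\nabla^2(\lambda^Tq)(\yb)\tilde v$ so that $(\lambda,\mu)\in\Mb(\tilde v,\tilde v^\ast)$---is exactly the observation the paper records. Your added remark that $(\tilde v,\tilde v^\ast)\in T_{\Gr\widehat N_\Gamma}(\yb,\yba)$ because the associated directional normal cone is nonempty makes explicit a point the paper leaves implicit.
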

\begin{proof}
The statement follows from Theorem \ref{ThLimNormalCone1} and Proposition \ref{PropRegContLim} together with the observation that for any element $(w^\ast,w)\in N_{\Gr\widehat N_\Gamma}^1((\yb,\yba);(0,v^\ast))$ there is some $\tilde v\in \Kb$ with $\norm{\tilde v}=1$, $(\lambda,\mu)\in\Mb(0,v^\ast)$ with $\lambda\in\Lb(\tilde v)$ and index sets $J\in\Jb(\tilde v)$, $I^+$ and $\I$ with $I^+(\lambda,\mu)\subset I^+\subset \I\subset\J$ such that $(w^\ast,w)\in Q(\tilde v,\lambda,I^+,\I)$. Consequently, $(\lambda,\mu)\in\Mb(\tilde v,v^\ast+\nabla^2(\lambda^Tq)(\yb)\tilde v)$, showing $(w^\ast,w)\in N_{\Gr\widehat N_\Gamma}((\yb,\yba);(\tilde v,v^\ast+\nabla^2(\lambda^Tq)(\yb)\tilde v))$.
\end{proof}

We conclude this section with two illustrative examples, the results of which will then be used in the next section.

\begin{example}\label{ExampleIncl}
Let $\Gamma\subset\R^2$ be given by
$$
q(y)=
\begin{pmatrix}
-y_1^2+y_2\\
-y_1^2-y_2\\
y_1
\end{pmatrix}.
$$
Put $\bar{y}=(0,0), \yba=(0,1)$ and let us compute
$N_{\Gr\widehat{N}_\Gamma}(\yb,\yba)$. Obviously, MFCQ is violated at $\yb$. Owing to \cite[Example 4]{GfrOut14} we have $\Kb=\R_-\times\{0\}$,
\[
\Lb=\{\lambda \in \mathbb{R}^{3}_{+}\mv \lambda_{1}- \lambda_{2}=1, \lambda_{3}=0\}
\]
and
\[
\Lb(v)= \left\{
\begin{array}{lll}
\{\lambda \in \Lb \mv \lambda_{1}=1, \lambda_{2}=0\} & \mbox{ if } & 0 \neq v \in \Kb \\
\bar{\Lambda} & \mbox{ if } & v=0.
\end{array}\right.
\]
Further, $M_q$ is metrically subregular at $(0,0)$ and  metrically regular in the vicinity of $0$
and by Theorem \ref{ThRegNormalCone} we obtain
\[T_{\Gr\widehat{N}_\Gamma}(\yb,\yba )=\{(v,v^\ast)\mv v_1\leq 0, \, v_2 = 0,\, v_1^\ast=-2v_1\}\cup (\{0,0\}\times \R_+\times \R)\]
and
\[\widehat N_{\Gr \widehat N_\Gamma}(\yb,\yba)=\{(w^\ast,w)\mv w_1\leq 0, \, w_2 = 0,\, w_1^\ast\geq 2w_1\}.\]
Now consider $0\not=v\in\Kb$. It follows that $v=(v_1,0)$ with $v_1<0$, $\Ib(v)=\{1,2\}$ and that $\Jb(v)$ consists of the collection of all index sets $\J\subset\{1,2\}$ such that there exists $z$ with
\begin{eqnarray}
\label{EqEx3Eq1}\nabla q_1(\yb)z+v^T\nabla^2q_1(\yb)v= z_2-2v_1^2&\leq& 0\\
\label{EqEx3Eq2}\nabla q_2(\yb)z+v^T\nabla^2q_2(\yb)v= -z_2-2v_1^2&\leq& 0\\
\nonumber\nabla q_3(\yb)z+v^T\nabla^2q_3(\yb)v=z_1&\leq&0\end{eqnarray}
and $\J$ contains the active inequalities of \eqref{EqEx3Eq1}, \eqref{EqEx3Eq2}. Hence, $\Jb(v)=\{\emptyset,\{1\},\{2\}\}$. Since $\nabla q_i(y)=(-2y_1,\pm1)\not=0$, $i=1,2$, for every $\J\subset\Jb(v)$ the mapping $(q_i)_{i\in \J}$ is 2-regular in direction $v$, implying that 2-LICQ holds in direction $v$ by Proposition \ref{PropSuffCond2LICQ}.\\
Further, for every $(v,v^\ast)\in T_{\Gr\widehat{N}_\Gamma}(\yb,\yba)$ with $v\not=0$ we have $v\in\Kb$, $v_1^\ast=-2v_1$ and thus \begin{eqnarray*}\Mb(v,v^\ast)&=&\{(1,0,0)\}\times\{(\mu_1,\mu_2,\mu_3)\mv \mu_2\geq0,\ \mu_1-\mu_2=v_2^\ast, 0\leq \mu_3=v_1^\ast +2v_1\}\\
&=&\{(1,0,0)\}\times\{(\mu_1,\mu_2,0)\mv \mu_2\geq0,\ \mu_1-\mu_2=v_2^\ast\},\end{eqnarray*}
yielding
\[N_{\Gr \widehat N_\Gamma}((\yb,\yba);(v,v^\ast))=Q(v,(1,0,0),\{1\},\{1\})\]
by Theorem \ref{ThLimNormalCone1}, where we have taken into account that the only index set $\J\in\Jb(v)$ with $I^+(1,0,0)=\{1\}\subset \J$ is $\J=\{1\}$. Straightforward calculations give
\begin{eqnarray*}&\Kb_{\{1\},\{1\}}(v)=\Kb_{\{1\},\{1\}}=\R\times\{0\},&\\
&Q(v,(1,0,0),\{1\},\{1\})=\{(w^\ast,w)\mv w_2 = 0, \, w_1^\ast= 2w_1\}.&\end{eqnarray*}
In the next step we want to analyze $N^1_{\Gr \widehat N_\Gamma}((\yb,\yba);(0,v^\ast))$ for $(0,0)\not=(0,v^\ast)\in
T_{\Gr \widehat N_\Gamma}(\yb,\yba)$. It follows that $v_1^\ast\geq 0$ and  for every $\tilde v\in\Kb$, $\norm{\tilde v}=1$, we obtain
\[\{(\lambda,\mu)\in \Mb(0,v^\ast)\mv \lambda\in\Lb(\tilde v)\}=\{(1,0,0)\}\times\{(\mu_1,\mu_2,v_1^\ast)\mv \mu_2\geq0,\ \mu_1-\mu_2=v_2^\ast\}.\]
Since $\Jb(\tilde v)=\{\emptyset,\{1\},\{2\}\}$, if $v_1^\ast>0$ we obtain $N^1_{\Gr \widehat N_\Gamma}((\yb,\yba);(0,v^\ast))=\emptyset$. On the other hand, if $v_1^\ast=0$, similar arguments as before yield
\[N^1_{\Gr \widehat N_\Gamma}((\yb,\yba);(0,v^\ast))=\{(w^\ast,w)\mv w_2 = 0, \, w_1^\ast= 2w_1\}.\]
Finally we consider $N^2_{\Gr \widehat N_\Gamma}((\yb,\yba);(0,v^\ast))$ for $(0,0)\not=(0,v^\ast)\in T_{\Gr \widehat N_\Gamma}(\yb,\yba)$. We have $\Null(\yb)=\{0\}$, $\bar{\tilde \Lambda}^\E(0)=\{(1,0,0)\}$,
\[\{(\lambda,\mu)\in \Mb(0,v^\ast)\mv \lambda\in\bar{\tilde \Lambda}^\E(0)\}=\{(1,0,0)\}\times\{(\mu_1,\mu_2,v_1^\ast)\mv \mu_2\geq0,\ \mu_1-\mu_2=v_2^\ast\}\]
and $\Jb(0)=\{\{1,2\},\{1,2,3\}\}$.\\
Using Proposition \ref{PropRegLimCone} we obtain
\[N_{\Gr\widehat N_\Gamma}^2((\yb,\yba);(0,v^\ast))\subset
\begin{cases}\bigcup_{\{1\}\subset I^+\subset \I\subset\{1,2,3\}}Q_0(0,(1,0,0),I^+,\I)&\mbox{if $v_1^\ast=0$}\\
\bigcup_{\{1,3\}\subset I^+\subset \I\subset\{1,2,3\}}Q_0(0,(1,0,0),I^+,\I)&\mbox{if $v_1^\ast>0$.}
\end{cases}\]
By the definition we have $Q_0(0,(1,0,0),I^+,\I)=\{(w^\ast,w)\mv w\in\KbI, w^\ast-(2w_1,0)\in \KbI^\circ\}$ and
\[\KbI=\begin{cases}(0,0)&\mbox{if $\{1\}\subset I^+\subset \I\subset \{1,2,3\}\ \wedge 3\in I^+$,}\\
\R_-\times \{0\}&\mbox{if $\{1\}\subset I^+\subset \I\subset \{1,2,3\}\ \wedge 3\in \I\setminus I^+$,}\\
\R\times \{0\}&\mbox{if $\{1\}\subset I^+\subset \I\subset \{1,2\}$.}
\end{cases}\]
Hence we get the inclusions
\begin{eqnarray}
\nonumber\lefteqn{N_{\Gr\widehat N_\Gamma}^2((\yb,\yba);(0,v^\ast))}\\
\label{EqExInclLimRegNormalCone}&\subset&
\begin{cases}(\R\times\R)\times\{(0,0)\}\cup \{(w^\ast,w)\mv w_2=0, w_1^\ast=2w_1 \}&\\
\qquad\cup \{(w^\ast,w)\mv w_1\leq 0, w_2=0, w_1^\ast\geq 2w_1 \}&\mbox{if $v_1^\ast=0$}\\
(\R\times\R)\times\{(0,0)\}&\mbox{if $v_1^\ast>0$}
\end{cases}
\end{eqnarray}
and two-sided estimates
\begin{equation}\label{EqExInclLimNormalCone}L\subset N_{\Gr\widehat N_\Gamma}(\yb,\yba)\subset L\cup (\R\times\R)\times\{(0,0)\}
\end{equation}
with
\begin{eqnarray*}
L&:=&\widehat N_{\Gr\widehat N_\Gamma}(\yb,\yba)\cup N_{\Gr\widehat N_\Gamma}((\yb,\yba);((-1,0),(2,0))\\
&=& \{(w^\ast,w)\mv w_1\leq 0, w_2=0, w_1^\ast\geq 2w_1\}\cup\{(w^\ast,w)\mv w_2=0, w_1^\ast= 2w_1 \}.
\end{eqnarray*}
Let us now compute $N_{\Gr\widehat N_\Gamma}^2((\yb,\yba);(0,v^\ast))$ exactly by the definition. By using Theorem \ref{ThRegNormalCone} we obtain
\[\widehat N_{\Gr \widehat N_\Gamma}(\yb,y^\ast)=\begin{cases}(\R\times\R)\times \{(0,0)\}& \mbox{if $ y_1^\ast>0,y_2^\ast>0$,}\\
\{(w^\ast,w)\mv w_1\leq 0, w_2=0, w_1^\ast\geq 2w_1 \}& \mbox{if $ y_1^\ast=0,y_2^\ast>0$,}\\
\emptyset& \mbox{if $ y_1^\ast<0,y_2^\ast>0$}
\end{cases}\]
and consequently
\begin{eqnarray*}\lefteqn{N_{\Gr\widehat N_\Gamma}^2((\yb,\yba);(0,v^\ast))}\\
&=&
\begin{cases}(\R\times\R)\times\{(0,0)\}\cup \{(w^\ast,w)\mv w_1\leq 0, w_2=0, w_1^\ast\geq 2w_1 \}&\mbox{if $v_1^\ast=0$,}\\
(\R\times\R)\times\{(0,0)\}&\mbox{if $v_1^\ast>0$,}
\end{cases}
\end{eqnarray*}
showing that the inclusion \eqref{EqExInclLimRegNormalCone} is strict in case $v_1^\ast=0$ and that the assertion of Proposition \ref{PropRegContLim} does not hold due to $\Null(\yb)=\{0\}$. Nevertheless, the second inclusion in \eqref{EqExInclLimNormalCone} holds with equality.\hfill$\triangle$
\end{example}

\begin{example}\label{ExampleEq}
Now let $\Gamma\subset\R^2$ be given merely by
$$
q(y)=
\begin{pmatrix}
-y_1^2+y_2\\
-y_1^2-y_2
\end{pmatrix},
$$
$\bar{y}=(0,0)$ and  $\yba=(0,1)$. Again MFCQ is violated at $\yb$, but $M_q$ is metrically subregular at $(0,0)$ and metrically regular in the vicinity of $0$.
Straightforward calculations yield $\Kb=\R\times\{0\}$,
\[
\Lb=\{\lambda \in \mathbb{R}^{2}_{+}\mv \lambda_{1}- \lambda_{2}=1\},
\]
\[
\Lb(v)=\begin{cases}
\{(1,0)\} & \mbox{ if $0 \neq v \in \Kb$,} \\
\bar{\Lambda} & \mbox{ if $v=0$,}
\end{cases}
\]
\[T_{\Gr\widehat{N}_\Gamma}(\yb,\yba )=\{(v,v^\ast)\mv v_2 = 0,\, v_1^\ast=-2v_1\}\]
and
\[\widehat N_{\Gr \widehat N_\Gamma}(\yb,\yba)=\{(w^\ast,w)\mv  \, w_2 = 0,\, w_1^\ast=2w_1\}.\]
Similarly as in Example \ref{ExampleIncl} we obtain for every $0\not=v\in\Kb$, that $\Jb(v)=\{\emptyset,\{1\},\{2\}\}$ and that for every $\J\subset\Jb(v)$ the mapping $(q_i)_{i\in \J}$ is 2-regular in direction $v$.\\
Further, for every $(v,v^\ast)\in T_{\Gr\widehat{N}_\Gamma}(\yb,\yba)$ with $v\not=0$ we have  \begin{eqnarray*}\Mb(v,v^\ast)&=&\{(1,0)\}\times\{(\mu_1,\mu_2)\mv \mu_2\geq0,\ \mu_1-\mu_2=v_2^\ast\}
\end{eqnarray*}
yielding
\[N_{\Gr \widehat N_\Gamma}((\yb,\yba);(v,v^\ast))=Q(v,(1,0),\{1\},\{1\})\]
by Theorem \ref{ThLimNormalCone1}. As in Example \ref{ExampleIncl} we can derive
\[\Kb_{\{1\},\{1\}}(v)=\Kb_{\{1\},\{1\}}=\R\times\{0\},\ Q(v,(1,0),\{1\},\{1\})=\{(w^\ast,w)\mv w_2 = 0, \, w_1^\ast= 2w_1\}\]
and
$N^1_{\Gr \widehat N_\Gamma}((\yb,\yba);(0,v^\ast))=\{(w^\ast,w)\mv w_2 = 0, \, w_1^\ast= 2w_1\}$.\\
Now we consider $N^2_{\Gr \widehat N_\Gamma}((\yb,\yba);(0,v^\ast))$. $\Null(\yb)$ amounts to $\Kb=\R\times\{0\}$, $\bar{\tilde \Lambda}^\E(\tilde v)=\{(1,0)\}$ $\forall \tilde v\in \Null(\yb)$  and
\[\{(\lambda,\mu)\in \Mb(0,v^\ast)\mv \lambda\in\bar{\tilde \Lambda}^\E(\tilde v)\}=\{(1,0)\}\times\{(\mu_1,\mu_2)\mv \mu_2\geq0,\ \mu_1-\mu_2=v_2^\ast\}\ \forall \tilde v\in\Null(\yb),\]
$\Jb(\tilde v)=\{\emptyset,\{1\},\{1\}\}$, $0\not=\tilde v\in\Null(\yb)$ and $\Jb(0)=\{\{1,2\}\}$. Using Proposition \ref{PropRegLimCone} we obtain
\[N_{\Gr\widehat N_\Gamma}^2((\yb,\yba);(0,v^\ast))\subset
Q_0(\tilde v,(1,0),\{1\},\{1\})=\{(w^\ast,w)\mv w_2 = 0, \, w_1^\ast= 2w_1\}.\]
This verifies the inclusion $N_{\Gr\widehat N_\Gamma}^2((\yb,\yba);(0,v^\ast))\subset N_{\Gr\widehat N_\Gamma}^1((\yb,\yba);(0,v^\ast))$ as stated in Proposition \ref{PropRegContLim}. Moreover, all the assumptions of Theorem \ref{ThEqLimNormalCone} are fulfilled and
\[N_{\Gr\widehat N_\Gamma}(\yb,\yba)=\{(w^\ast,w)\mv w_2 = 0, \, w_1^\ast= 2w_1\}.\]
\hfill$\triangle$
\end{example}
Note that the results of Examples \ref{ExampleIncl}, \ref{ExampleEq} cannot be obtained by any technique developed to this purpose so far.

\section{Stability of parameterized equilibria}

In this section we consider a parameter-dependent equilibrium governed by the GE
\begin{equation}\label{eq-5.1}
0 \in F(x,y) + \hat{N}_{\Gamma}(y),
\end{equation}
where $ x \in \mathbb{R}^{n} $ is the {\em parameter}, $ y \in \mathbb{R}^{m} $ is the  {\em decision variable}, $ F: \mathbb{R}^{n} \times \mathbb{R}^{m} \rightarrow \mathbb{R}^{m}$ is continuously differentiable and $ \Gamma $ is given by \eqref{eq-207}. Our aim is to analyze local stability of the respective  {\em solution map} $ S: \mathbb{R}^{n}\rightrightarrows \mathbb{R}^{m} $ defined by
\begin{equation}\label{eq-5.2}
S(x):=\{y \in \mathbb{R}^{m} | 0 \in F(x,y)+\hat{N}_{\Gamma}(y)\}
\end{equation}
around a given {\em reference point} $ (\bar{x},\bar{y})\in {\rm gph} S $. In particular, we will examine the so-called Aubin property of $ S $ around $ (\bar{x},\bar{y}) $ which is an efficient Lipschitz-like property for multifunctions.\\

\begin{definition}[\cite{Aub84}]\label{DefAubProp} $ S $ has the Aubin property around $ (\bar{x},\bar{y}) $ provided there are neighborhoods $ \mathcal{U} $ of $ \bar{x} $, $ \mathcal{V} $ of $ \bar{y} $ and a nonnegative modulus $ \kappa $ such that
\[
S(x_{1})\cap \mathcal{V} \subset S(x_{2}) + \kappa~\| x_{1}-x_{2} \|~ \B \mbox{ for all } x_{1},x_{2} \in \mathcal{U}.
\]
\end{definition}
This property can be viewed as a graph localization of the classical local Lipschitz behavior and is closely related to the metric regularity defined in Section 2.

The Aubin property of solution maps has already been investigated in numerous works; let us mention at least \cite[Section 4.4.2]{Mo06a} and \cite{MoOut07}, where the authors have dealt with general
parametric equilibria including (\ref{eq-5.1}) as a special case. In what follows, however, we will confine ourselves with  GE (\ref{eq-5.1}), make use of the results from the preceding section and obtain a new set of conditions ensuring the Aubin property of $S$ around $(\bar{x}, \bar{y})$.

As in the most works about Lipschitz stability our main tool is the Mordukhovich criterion $D^{*}S(\bar{x},\bar{y})(0)=\{0\}$ which is a characterization of the Aubin property around $(\bar{x},\bar{y})$ \cite[Theorem 4.10]{Mo06a}, \cite[Theorem 9.46]{RoWe98}.
In our case it leads directly to the following statement.

\begin{proposition}\label{PropAubProp}
Let the mapping $\hat{N}_{\Gamma}$ have a closed graph around $(\bar{y}, -F(\bar{x},\bar{y}))$ and
assume that the implication
\begin{equation}\label{eq-5.3}
0\in \nabla_{y}F(\bar{x},\bar{y})^{T}b+D^{*}\hat{N}_{\Gamma}(\bar{y},-F(\bar{x},\bar{y}))(b)\Rightarrow b = 0
\end{equation}
holds true.
Then $S$ has the Aubin property around $(\bar{x},\bar{y})$.

If $\nabla_{x}F(\bar{x},\bar{y})$ is surjective, then the above condition  is not only sufficient but also necessary for $S$ to have the Aubin property around $(\bar{x},\bar{y})$.

\end{proposition}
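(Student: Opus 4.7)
The plan is to apply the Mordukhovich criterion (as already invoked in the statement via \cite[Theorem 4.10]{Mo06a}, \cite[Theorem 9.46]{RoWe98}): in finite dimensions, $S$ enjoys the Aubin property around $(\bar x,\bar y)$ if and only if $\Gr S$ is locally closed there and $D^*S(\bar x,\bar y)(0)=\{0\}$. Local closedness of $\Gr S$ at $(\bar x,\bar y)$ is immediate from continuity of $F$ combined with the assumed local closedness of $\Gr\hat N_\Gamma$ at $(\bar y,-F(\bar x,\bar y))$, so the whole task reduces to controlling $D^*S(\bar x,\bar y)(0)$.

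The key step is to note that $\Gr S=H^{-1}(\Gr\hat N_\Gamma)$ for the $C^1$ map $H(x,y):=(y,-F(x,y))$, whose Jacobian at $(\bar x,\bar y)$ has the block form
\[
\nabla H(\bar x,\bar y)=\begin{pmatrix} 0 & I_m\\ -\nabla_x F(\bar x,\bar y) & -\nabla_y F(\bar x,\bar y)\end{pmatrix}.
\]
Applying the preimage rule for limiting normal cones under a $C^1$ map and unravelling the definition of the coderivative yields, after a short sign bookkeeping,
\[
\tilde x\in D^*S(\bar x,\bar y)(\tilde y)\ \Longleftrightarrow\ \exists\, b\in\R^m:\ \tilde x=\nabla_x F(\bar x,\bar y)^T b,\ 0\in \tilde y+\nabla_y F(\bar x,\bar y)^T b+D^*\hat N_\Gamma(\bar y,-F(\bar x,\bar y))(b).
\]
The ``$\Leftarrow$'' implication holds unconditionally; the ``$\Rightarrow$'' implication requires that no nonzero $(u,v)\in N_{\Gr\hat N_\Gamma}(\bar y,-F(\bar x,\bar y))$ satisfies $\nabla H(\bar x,\bar y)^T(u,v)=0$, and a direct computation shows that \eqref{eq-5.3} (upon identifying $b=-v$) implies this qualification condition.

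For the sufficient direction I would set $\tilde y=0$ in the displayed characterization and invoke \eqref{eq-5.3}: every admissible $b$ must vanish, hence $\tilde x=0$, so $D^*S(\bar x,\bar y)(0)=\{0\}$ and the Mordukhovich criterion delivers the Aubin property. For the necessary direction, surjectivity of $\nabla_x F(\bar x,\bar y)$ makes $\nabla H(\bar x,\bar y)$ surjective as well; for such a submersion the preimage normal-cone rule holds with equality (see, e.g., \cite[Exercise 6.7]{RoWe98}), so the displayed ``$\Leftarrow$'' becomes an honest ``$\Leftrightarrow$''. Assuming the Aubin property, the criterion yields $D^*S(\bar x,\bar y)(0)=\{0\}$, so any $b$ with $0\in\nabla_y F(\bar x,\bar y)^T b+D^*\hat N_\Gamma(\bar y,-F(\bar x,\bar y))(b)$ forces $\nabla_x F(\bar x,\bar y)^T b=0$; injectivity of the transpose (the dual of surjectivity of $\nabla_x F(\bar x,\bar y)$) then gives $b=0$.

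The main obstacle I anticipate is nothing deep: it is the careful sign and index bookkeeping when translating between $N_{\Gr\hat N_\Gamma}$ and $D^*\hat N_\Gamma$, together with being precise about whether the preimage rule is being used as an inclusion (sufficiency) or as an equality (necessity). Once the coderivative of $S$ is written in the form above, both halves of the proposition drop out of the Mordukhovich criterion in essentially one line.
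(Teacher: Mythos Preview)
Your approach is correct and coincides with the paper's, which simply cites \cite[Corollary~4.61]{Mo06a} for sufficiency and \cite[Theorem~4.44(i)]{Mo06a} for necessity; you are effectively unpacking those results via the preimage rule $\Gr S=H^{-1}(\Gr\hat N_\Gamma)$ and the Mordukhovich criterion. One minor caveat: the assertion that the ``$\Leftarrow$'' implication holds \emph{unconditionally} is not quite right for limiting normals absent surjectivity of $\nabla H$, but since your actual argument uses only the ``$\Rightarrow$'' direction (under the qualification condition, which you correctly derive from \eqref{eq-5.3}) for sufficiency and full equality (under surjectivity of $\nabla_x F$) for necessity, this does not affect the validity of the proof.
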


\begin{proof}The first statement is a specialization of \cite[Corollary 4.61]{Mo06a}. The second one follows directly from \cite[Theorem 4.44(i)]{Mo06a}.
\end{proof}

Combining Theorem \ref{ThEqLimNormalCone} with the above statement, we arrive at the following criterion for the Aubin property of $S$ around $(\bar{x},\bar{y})$.

\begin{theorem}\label{ThEquivAub}
Consider GE (\ref{eq-5.1}) and the reference point $(\bar{x},\bar{y})$ and assume that $M_{q}$ is metrically subregular at $(\bar{y},0)$ and  metrically regular in the vicinity of
 $\bar{y}$. Put $\bar{y}^{*}:=-F(\bar{x},\bar{y})$ and suppose that for every nonzero direction $u$ from $\bar{K}(=K(\bar{y},\bar{y}^{*}))$ and every maximal index set $\mathcal{J}\in \bar{\mathcal{J}}(u)$ the mapping
$y \mapsto (q_{i}(y))_{i\in \mathcal{J}}$ is 2-regular at $\bar{y}$ in the direction $u$ and $\mathcal{N}(\bar{y})\neq \{0\}$.

Then the validity of the implication
\begin{equation}\label{eq-5.6}
-\left[
\begin{array}{c}
\nabla_{y}F(\bar{x},\bar{y})^{T}b\\
b
\end{array}
\right] \in
\bigcup\limits_{\stackrel{(v,v^{*})\in T_{{\rm gph}\hat{N}_{\Gamma}}(\bar{y},\bar{y}^{*})}{v\neq 0}}
\bigcup\limits_{\stackrel{(\lambda, \mu) \in \bar{\mathcal{M}}(v,v^{*})}{\mathcal{J}\in \bar{\mathcal{J}}(v)}}
\bigcup\limits_{I^{+}(\lambda,\mu)\subset I^{+}\subset\mathcal{I}\subset\mathcal{J}} Q(v,\lambda,I^{*},\mathcal{I})\Rightarrow b = 0
\end{equation}
implies the Aubin property of $S$ around $(\bar{x},\bar{y})$. If $\nabla_{x}F(\bar{x},\bar{y})$ is surjective, then implication (\ref{eq-5.6}) is not only sufficient but also necessary for $S$ to have the Aubin property around $(\bar{x},\bar{y})$.
\end{theorem}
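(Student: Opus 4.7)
The plan is to derive Theorem \ref{ThEquivAub} by combining Proposition \ref{PropAubProp} with Theorem \ref{ThEqLimNormalCone}: the former yields the coderivative-based Mordukhovich criterion for the Aubin property of $S$, while the latter supplies the explicit formula for the limiting normal cone that feeds into this criterion.

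First I would verify that $\widehat N_\Gamma$ has closed graph around $(\bar y, \bar y^*)$, which follows from the metric subregularity of $M_q$ at $(\bar y, 0)$ via the representation $\widehat N_\Gamma(y) = \nabla q(y)^T N_{\R^l_-}(q(y))$ recalled in Section~2. Proposition \ref{PropAubProp} then tells us that
\[
0 \in \nabla_y F(\bar x, \bar y)^T b + D^*\widehat N_\Gamma(\bar y, \bar y^*)(b) \;\Longrightarrow\; b = 0
\]
is sufficient for the Aubin property of $S$ around $(\bar x, \bar y)$, and is also necessary when $\nabla_x F(\bar x, \bar y)$ is surjective. By the very definition of the limiting coderivative, this implication is equivalent to
\[
-\bigl(\nabla_y F(\bar x, \bar y)^T b,\, b\bigr) \in N_{\Gr \widehat N_\Gamma}(\bar y, \bar y^*) \;\Longrightarrow\; b = 0.
\]
Since the hypotheses of Theorem \ref{ThEqLimNormalCone} coincide with those imposed in Theorem \ref{ThEquivAub}, I may substitute the explicit formula for $N_{\Gr \widehat N_\Gamma}(\bar y, \bar y^*)$ provided by that theorem. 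The union over nonzero $(v, v^*) \in T_{\Gr \widehat N_\Gamma}(\bar y, \bar y^*)$ of the sets $Q(v, \lambda, I^+, \I)$ in that decomposition is precisely the set appearing on the right-hand side of (\ref{eq-5.6}).

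The step that requires the most care — and the main obstacle — is accounting for the regular-normal-cone summand $\widehat N_{\Gr \widehat N_\Gamma}(\bar y, \bar y^*)$, which appears in the Theorem \ref{ThEqLimNormalCone} decomposition but is absent from (\ref{eq-5.6}). Here I would reuse the absorption argument from the closing paragraph of the proof of Theorem \ref{ThEqLimNormalCone}: because $\Null(\bar y) \neq \{0\}$ and $\Null(\bar y) \subset \Kb$, a unit direction $\tilde v \in \Kb$ is available, and any $(w^*, w) \in \widehat N_{\Gr \widehat N_\Gamma}(\bar y, \bar y^*)$ can be shown to belong to $Q(\tilde v, \lambda, I^+, \I)$ for a suitable pair $(\lambda, \mu) \in \Mb(\tilde v,\, v^* + \nabla^2(\lambda^T q)(\bar y)\tilde v)$ and index sets $I^+ \subset \I \subset \J \in \Jb(\tilde v)$. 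Once this absorption is in place, the displayed coderivative inclusion reduces to (\ref{eq-5.6}), which establishes the sufficiency of (\ref{eq-5.6}) in general and, under surjectivity of $\nabla_x F(\bar x, \bar y)$, also its necessity.
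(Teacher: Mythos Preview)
Your overall strategy---invoke Proposition \ref{PropAubProp} (the Mordukhovich criterion) and feed in the formula from Theorem \ref{ThEqLimNormalCone}---matches the paper's approach exactly. However, your treatment of the closedness of $\Gr\widehat N_\Gamma$ around $(\yb,\yba)$ is too quick and contains a genuine gap. Metric subregularity of $M_q$ at $(\yb,0)$ alone gives the representation $\widehat N_\Gamma(y)=\nabla q(y)^T N_{\R^l_-}(q(y))$ only \emph{at} $\yb$, not at nearby points $y$; and even where the representation is available, it does not by itself force the associated multipliers $\lambda^k$ to stay bounded along a sequence $(y_k,y_k^\ast)\to(y,y^\ast)$. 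The paper's proof is devoted almost entirely to this point: it uses metric regularity \emph{in the vicinity of} $\yb$ (Definition \ref{DefMetrRegExc}) together with \cite[Example 9.44]{RoWe98} to bound the multipliers when the limit point $y$ differs from $\yb$, and handles the case $y=\yb$ by a separate splitting according to whether $y_k=\yb$ finitely or infinitely often. You should supply this argument rather than assert closedness.

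Your observation about the regular-normal-cone summand $\widehat N_{\Gr\widehat N_\Gamma}(\yb,\yba)$ is perceptive: the union in \eqref{eq-5.6} runs only over $v\neq 0$, so sufficiency really does require that $\widehat N_{\Gr\widehat N_\Gamma}(\yb,\yba)$ be absorbed into that union. The paper's proof does not make this step explicit. Your proposed resolution, however, is not quite the ``closing paragraph'' of Theorem \ref{ThEqLimNormalCone}'s proof, which treats elements of $N^1_{\Gr\widehat N_\Gamma}((\yb,\yba);(0,v^\ast))$ rather than of $\widehat N_{\Gr\widehat N_\Gamma}(\yb,\yba)$. A cleaner route is to note that $\widehat N_{\Gr\widehat N_\Gamma}(\yb,\yba)\subset N^2_{\Gr\widehat N_\Gamma}((\yb,\yba);(0,0))$ trivially, check that the proofs of Propositions \ref{PropRegLimCone} and \ref{PropRegContLim} go through verbatim for $v^\ast=0$ (the hypothesis $v^\ast\neq 0$ is not used), and then apply the absorption of $N^1$ into the $v\neq 0$ union exactly as in Theorem \ref{ThEqLimNormalCone}'s proof. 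With this chain and a proper closedness argument, your proof is complete and coincides with the paper's.
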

\begin{proof}
The statement follows immediately from Theorem \ref{ThEqLimNormalCone} and  Proposition \ref{PropAubProp}, provided we show that $\mbox{ gph }\hat{N}_{\Gamma}$ is closed around $(\bar{y},\bar{y}^{*})$, i.e., there is a closed ball $B$ around $(\bar{y},\bar{y}^{*})$ such that $\mbox{ gph }\hat{N}_{\Gamma} \cap B$  is closed.
To this aim we will consider sequences
$y_{k}\rightarrow y, y_{k}^{*}\rightarrow y^{*}, y_{k}^{*}\in \hat{N}_{\Gamma}(y_{k})$ with $(y, y^\ast)$  sufficiently close to $(\bar{y},\yba)$.
Note that $M_q$ is metrically subregular at any point $(a,0)$ provided $a\in\Gamma$ is sufficiently close to $\yb$. This implies that
\begin{equation}\label{eq-5.7}
\hat{N}_{\Gamma}(a) = \nabla q(a)^{T} N_{\mathbb{R}^{l}_{-}}(q(a)).
\end{equation}
Let us distinguish among the following three situations:
\begin{enumerate}
\item $y\not=\yb$:  From (\ref{eq-5.7}) we infer the existence of multipliers $\lambda^{k}\in N_{\mathbb{R}^{l}_{-}}(q(y_{k}))$ such that
\[
y_{k}^{*} = \nabla q (y_{k})^{T}\lambda^{k}.
\]
By virtue of the assumed  metric regularity of $M_{q}$ in the vicinity of $\bar{y}$ this sequence is bounded, because otherwise the formula for the modulus of metric regularity in \cite[Example 9.44]{RoWe98} would be contradicted. We can thus pass (without relabeling) to a subsequence which converges to  some  $\lambda \in  N_{\mathbb{R}^{l}_{-}}(y)$. It follows that
\[
y^{*}=\nabla q(y)^{T}\lambda \in \hat{N}_{\Gamma}(y)
\]
and we are done.
\item $y=\yb$ and $y_k=\yb$ at most finitely many times: Then, by passing to a subsequence (without relabeling) one can ensure that $y_k\not=\yb$ $\forall k$ and proceed exactly in the same way as in 1.
\item $y=\yb$ and $y_k=\yb$ infinitely many times:   Then the result follows immediately from  the closedness of $ \hat{N}_{\Gamma}(\bar{y})$.
\end{enumerate}
\end{proof}

We illustrate now the preceding stability criteria by means of two GEs with the constraint sets analyzed in Examples \ref{ExampleIncl} and \ref{ExampleEq}.

\begin{example}\label{old}
Consider the GE (\ref{eq-5.1}) with $x,y \in \mathbb{R}^{2}$ and $F(x,y)=x$.   This GE represents stationarity conditions of the nonlinear program
\begin{equation}\label{eq-80}
\min_y  \langle y,x\rangle\quad \mbox{ subject to }\quad  y \in \Gamma.
\end{equation}
First let us take $\Gamma$ from Example \ref{ExampleEq} and put $\bar{x}=(0,-1)$, $\bar{y}=(0,0)$.  An application of Proposition   \ref{PropAubProp}  leads to the condition
\[
\{w \in \mathbb{R}^{2}| w_{2}= 0, ~w^{*}_{1} = 2 w_{1}, ~w^{*}_{1}=0\} = \{(0,0)\}
\]
which is clearly fulfilled. Hence, the respective solution map $S$ has the Aubin property around $(\bar{x},\bar{y})$.

Now let us consider the same situation with $\Gamma$ from Example \ref{ExampleIncl}. In this case  the respective solution map would have the Aubin property around $(\bar{x},\bar{y})$ provided   the implication
\begin{equation}\label{eq-81}
\left[\begin{array}{r}
0\\
w
\end{array}\right] \in L \Rightarrow w = 0,
\end{equation}
holds true. Indeed,  for the second term on the right-hand side of \eqref{EqExInclLimNormalCone} the corresponding implication follows immediately and so it suffices to consider only $L$. Clearly,  (\ref{eq-81}) amounts to
\[
\left. \aligned w_{1}\leq 0,\\ ~w_{2}=0,\\
~0 \geq 2 w_{1} \endaligned \right\} \Rightarrow w=0.
\]
This implication is, however, clearly violated e.g. by the vector $w=(-1,0)$. Since by virtue of \eqref{EqExInclLimNormalCone} $L$ is a lower estimate of $N_{\Gr\widehat N_\Gamma}(\yb,\yba)$, it follows that the  respective solution map does not possess the Aubin property around $(\bar{x},\bar{y})$. \hfill$\triangle$
\end{example}

\begin{example}\label{new}
Consider again the GE \eqref{eq-5.1} with $x,y \in \mathbb{R}^{2}$ but $F(x,y)=\alpha y-x$, where $\alpha$ is a positive scalar parameter. For $\alpha=1$ this GE represents stationarity conditions of the nonlinear program
\begin{equation}\label{eq-32}
\mbox{ min } \frac{1}{2} \| y-x \|^{2} ~\mbox{ subject to } y \in \Gamma,
\end{equation}
whose (global) solutions are metric projections of $x$ onto $\Gamma$. As the reference point take $\bar{x}=(0,1), \bar{y}=(0,0)$. With $\Gamma$ from Example \ref{ExampleEq} we obtain the condition
\[
(\alpha w, w)\in \{(w^\ast,w)\mv w_2 = 0, \, w_1^\ast=
2w_1\}\quad\Rightarrow\quad w=0
\]
which evidently holds true, whenever $\alpha\not=2$. So the Aubin property of the respective $S$ around $(\bar{x},\bar{y})$ has been established for all $\alpha\not=2$.

On the other hand, taking $\Gamma$ from Example \ref{ExampleIncl},
we arrive from \eqref{EqExInclLimNormalCone} at the implication
\[ (\alpha w,w)
\in L \cup (\mathbb{R}\times \mathbb{R}) \times \{(0,0)\} \Rightarrow w = 0.
\]
An analysis of this implication tells us that for $\alpha > 2$ the respective solution map does possess the Aubin property around $(\bar{x},\bar{y})$. On the other hand, for $\alpha\leq 2$ there is a nonzero $w$  such that $(\alpha w, w) \in L$. Since  $L$ is a lower estimate of $N_{\Gr\widehat N_\Gamma}(\yb,\yba)$, we conclude that in this case   the solution map does not possess
the Aubin property around $(\bar{x},\bar{y})$. \hfill$\triangle$
\end{example}

 Note that in Example \ref{old} and in Example \ref{new} for $\alpha<2$ $\bar{y}$ is only a stationary point in the optimization problems (\ref{eq-80}), (\ref{eq-32}) for $x = \bar{x}$ but not a minimum. In (\ref{eq-32}) for $x = \bar{x}$ with $\Gamma$ from Example \ref{ExampleIncl} we have to do with 2 stationary points (where the other one ($-0.5 \sqrt{2}, 0.5$) is a (global) minimum). As shown above, the respective $S$ does not behave in a Lipschitzian way around $(\bar{x},\bar{y})$, but on the basis \cite[Theorem 7]{GfrOut14} one can deduce  that it possesses the isolated calmness property at this point.

 \begin{remark}
 In the case of $\Gamma$ from Example \ref{ExampleEq}   in both examples the mappings $S^{-1}$ are even strongly metrically regular at $(\bar{y},\bar{x})$ \cite[page 179]{DR09}. The respective criteria (cf. e.g. \cite{Rob80}, \cite[Section 8.3.4]{KlKum02}), however, cannot be applied, because of a difficult shape of $\Gamma$ around $\bar{y}$.
 \end{remark}

\section{Conclusion}
In the paper we have derived a new technique for the computation of the limiting coderivative of $\widehat N_\Gamma$ for $\Gamma$ given by $C^2$ inequalities. The needed qualification conditions are fairly weak and, in contrast to \cite{LeMo04,MoOut07}, one obtains often exact formulas and not only upper estimates. On the other hand, the computation can be rather demanding, which reflects the complexity of the problem and corresponds to the results obtained for affine inequalities in \cite{HenRoem07}. The results have been used in verifying the Aubin property of parameterized GEs with $\Gamma$ as the constraint set and could be used also in deriving sharp M-stationarity conditions for a class of mathematical programs with equilibrium constraints.

\section*{Acknowledgements}
The research of the first author was supported by the Austrian Science Fund (FWF) under grant P26132-N25. The research of the second author was supported by the Grant Agency of the Czech Republic, project P402/12/1309 and the Australian Research Council, project  DP110102011.
The authors would like to express their gratitude to the reviewer for his/her careful reading and numerous important suggestions.

\end{document}